\newtheorem{theorem}{Theorem}[section]
\newtheorem{lemma}[theorem]{Lemma}
\newtheorem{corollary}[theorem]{Corollary}
\theoremstyle{definition}
\theoremstyle{remark}
\newtheorem{remark}[theorem]{Remark}
\numberwithin{equation}{section}
\def\la{\lambda}
\def\al{\alpha}
\def\e{\varepsilon}
\def\de{\delta}
\def\NN{{\mathbb N}}
\def\CC{{\mathbb C}}
\def\cT{{\mathcal T}}
\def\Re{{\rm Re}\,}
\def\Im{{\rm Im}\,}
\def\Int{{\rm Int}\,}
\def\codim{{\rm codim}\,}
\def\conv{{\rm conv}\,}
\def\diam{{\rm diam}\,}
\def\dist{{\rm dist}\,}
\def\card{{\rm card}\,}
\begin{document}
\title[Operators, bases, and matrices]{On interplay between operators, bases,\\ and matrices}

\author{V. M\"uller and Yu. Tomilov}

\address{Institute of Mathematics,
Czech Academy of Sciences,
Zitna 25, Prague,
 Czech Republic}
\email{muller@math.cas.cz}

\address{Institute of Mathematics, Polish Academy of Sciences,
\' Sniadeckich str.8, 00-656 Warsaw, Poland}
\email{ytomilov@impan.pl}


\subjclass{Primary 47B02, 47A67, 47A08; Secondary 47A12, 47A13}
\keywords{matrix representations, Hilbert space operators, bases, diagonals, numerical range}
\thanks{The first author has been supported by grant No. 20-31529X of GA CR and RVO:67985840. The second author was partially supported
by NCN grant UMO-2017/27/B/ST1/00078.}

\begin{abstract}
Given a bounded linear operator $T$ on separable Hilbert space, we develop an approach allowing one to 
construct a matrix representation for $T$ having certain specified algebraic or asymptotic structure.
We obtain matrix representations for $T$ with preassigned
bands of the main diagonals, with an upper bound for all of the matrix elements, and
with entrywise polynomial lower and upper bounds for these elements. 
In particular, we substantially generalize and complement our results on diagonals of operators from \cite{MT}
and other related results.
Moreover, we obtain a vast generalization of a theorem by Stout (1981), and (partially) answer his open question.
Several of our results have no analogues in the literature.
\end{abstract}

\dedicatory{In honour of N. K. Nikolski on the occasion of his eightieth anniversary}

\maketitle

\section{Introduction: a glimpse at matrix representations}
Following conventional approach to describing operators on finite-dimen\-sional spaces as matrices,
one may represent a bounded linear operator $T$ on infinite-dimensional separable Hilbert space $H$ as the matrix 
$$A_T:=(\langle T u_n, u_j \rangle)_{j,n=1}^\infty$$
 with respect
to an orthonormal basis $(u_n)_{n=1}^{\infty} \subset H$ and try to relate the properties of 
$T$ to the properties of $A_T.$
This very natural idea looks naive to some extent,
and the study of operators on infinite-dimensional spaces through their matrix representations goes back to the birth of operator theory
in the beginning of $20$-th century,
and most notably, to Schur's multiplication and Weyl-von Neumann's perturbation theorem for self\-adjoint operators. 

While such a coordinatization  approach was neglected in favor of more revealing 
and standard by now textbook techniques,
there was still a number of interesting applications of matrix representations scattered around the literature.
Most of them are related  to the studies in this paper and serve as motivations to what follows.
So to put the paper into a proper context, we review several significant directions 
related to matrix representations of bounded operators.

\subsection{Diagonals}
For $T \in B(H)$ and an orthonormal basis $(u_n)_{n=1}^{\infty}\subset H$ the sequence  $(\langle Tu_n, u_n \rangle)_{n=1}^{\infty}$ is called
the main diagonal of $T.$
Sometimes, the word ``main'' is omitted but it will be convenient for the sequel to use this more precise terminology.
The study of diagonals of operators on infinite-dimensional Hilbert spaces and also of the related issues 
goes back
to $70-80$'s with most essential results due to Fan, Fong, and Herrero.
See e.g. \cite{Fan84}, \cite{H} and \cite{Fan87} as samples of the research from that period.
The studies got a new impetus with foundational works of Kadison and Arveson \cite{Kadison02a}, \cite{Kadison02b} and \cite{Arveson_USA}, who discovered a subtle structure 
in the set of possible main diagonals of selfadjoint projections and, more generally, normal operators with finite spectrum,
thus providing infinite-dimensional counterparts of the famous Schur-Horn theorem. 
The papers by Arveson and Kadison gave rise to a number of generalisations in various directions, including similar results for elements of von Neumann algebras, diagonals of operator tuples, applications to frame theory,
etc. See, in particular, \cite{Bownik}, \cite{Kaftal}, \cite{Kennedy}, \cite{Massey} and \cite{MT}.
As an illustration we mention the next theorem proved recently in \cite{Jasper}.

\begin{theorem}\label{jasper}
A complex-valued sequence $(d_n)_{n=1}^\infty$ is the main diagonal of a unitary operator on $H$ if and only if $\sup_{n \ge 1}|d_n|\le 1$ and
\[
2(1-\inf_{n \ge 1}|d_n|)\le\displaystyle \sum_{n=1}^\infty (1-|d_n|).
\]
\end{theorem}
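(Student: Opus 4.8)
The plan is to strip off the phases first, and then handle necessity (an immediate bound together with one delicate inequality) and sufficiency (an explicit construction) separately. \emph{Reduction.} If $\Lambda=\diag(\la_n)$ is a diagonal unitary then $\Lambda U$ is unitary exactly when $U$ is, and $\langle\Lambda Uu_n,u_n\rangle=\la_nd_n$. Taking $\la_n=\overline{d_n}/|d_n|$ (and $\la_n=1$ where $d_n=0$) one may therefore assume throughout that $d_n=|d_n|=:r_n\in[0,1]$: for necessity this affects neither $\inf_n|d_n|$ nor $\sum_n(1-|d_n|)$, while for sufficiency it suffices to realize the nonnegative sequence $(r_n)$ and then left-multiply by $\diag(d_n/|d_n|)$.

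\smallskip\noindent\emph{Necessity.} The bound $\sup_n|d_n|\le1$ is immediate from $|\langle Uu_n,u_n\rangle|\le\|Uu_n\|=1$. For the second inequality assume $S:=\sum_n(1-r_n)<\infty$ (otherwise the right side is infinite). Then $\|(U-I)u_n\|^2=2(1-r_n)$ is summable, so $U-I$ is Hilbert--Schmidt; hence $A:=I-\Re U$ is a positive trace-class operator with $\operatorname{tr}A=S$, and $T:=(U-U^*)/(2i)$ has zero main diagonal, commutes with $A$, and satisfies $T^2=I-(\Re U)^2=2A-A^2$. The index $n_0$ realizing $r_{n_0}=\inf_nr_n$ maximizes $n\mapsto\langle Au_n,u_n\rangle=1-r_n$, so the claim $2(1-r_{n_0})\le S$ is $2\langle Au_{n_0},u_{n_0}\rangle\le\operatorname{tr}A$; diagonalizing $A$ and $T$ jointly and writing $\theta_k$ for the eigenvalue-angles of $U$, $m_k$ for the multiplicities, $P_k$ for the spectral projections and $p_k=\|P_ku_{n_0}\|^2$, this reads
\[
2\sum_k(1-\cos\theta_k)\,p_k\ \le\ \sum_k(1-\cos\theta_k)\,m_k ,
\]
with $p_k\ge0$, $\sum_kp_k=1$, $\sum_kp_k\cos\theta_k=r_{n_0}\ge0$, $\sum_kp_k\sin\theta_k=0$, and $\sum_n\|P_ku_n\|^2=m_k$. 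When $\sum_k(1-\cos\theta_k)m_k\ge2$ this is trivial, its left side being $2(1-r_{n_0})\le2$; the substance is the regime $S<2$, where the reality constraint $\sum_kp_k\sin\theta_k=0$, the positivity $\sum_kp_k\cos\theta_k\ge0$, and the column relations $\sum_n\|P_ku_n\|^2=m_k$ must all be used together (equivalently, one compresses $U$ to a large coordinate subspace and invokes the Schur--Horn-type description of diagonals of unitary \emph{matrices}). This is where the real work lies: the naive column estimate $|\langle Uu_j,u_n\rangle|^2\le1-r_j^2$ yields only the strictly weaker ``squared'' inequality $2(1-\inf_nr_n^2)\le\sum_n(1-r_n^2)$, and every elementary Cauchy--Schwarz refinement loses the same factor of~$2$.

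\smallskip\noindent\emph{Sufficiency.} Assume $d_n=r_n\in[0,1)$ (the indices with $r_n=1$ being left fixed by $U$) and put $\de_n=1-r_n\in(0,1]$. If $\sum_n\de_n=\infty$, split $\NN$ greedily into consecutive finite blocks $B_j$ with $2\le\sum_{n\in B_j}\de_n<3$; then $2\sup_{n\in B_j}\de_n\le\sum_{n\in B_j}\de_n$ on each block, so it suffices to realize each block by a finite unitary $U_j$ and set $U=\bigoplus_jU_j$. For a $k\times k$ block with diagonal $(r_1,\dots,r_k)$ whose defects satisfy the ``polygon'' inequality $\max_i\de_i\le\sum_{j\ne i}\de_j$ I would induct on $k$: $k\le3$ are base cases ($k=2$ forces $r_1=r_2$ and uses a planar rotation; $k=3$ reduces to an elementary solvability question equivalent to the polygon inequality). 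For $k\ge4$, let $i,j$ index the two smallest defects; if $\de_i+\de_j\le1$, \emph{merge} coordinates $i,j$ into a single coordinate of value $s:=r_i+r_j-1\in[0,1)$ by embedding $\CC^{k-1}$ into $\CC^k$ with the merged coordinate becoming the unit vector $(\sqrt t,\sqrt{1-t})$, $t=\de_i/(\de_i+\de_j)$, of a coordinate $2$-plane and setting $U=U'\oplus(\text{identity on the orthogonal line})$; a short computation gives $\diag U=(r_i,r_j,\dots)$ with the diagonal of $U'$ in the remaining slots, and since the $(k-1)$-dimensional defect data is $(\de_i+\de_j,\dots)$ with the other $\de_l$ unchanged, and for $k\ge4$ the sum of the two smallest defects does not exceed the sum of the rest, the polygon inequality persists. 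The sole remaining case, $\de_i+\de_j>1$ (forcing all but one $r_l$ below $1/2$), is degenerate and handled directly via a cyclic permutation; the case $\sum_n\de_n<\infty$ with infinitely many $\de_n>0$ is treated by a countable iteration of the merge, servicing the largest residual defect at each step. Reinstating the phases by left-multiplication with $\diag(d_n/|d_n|)$ finishes the proof.
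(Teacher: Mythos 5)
First, a point of orientation: the paper does not prove Theorem \ref{jasper} at all --- it is quoted from Jasper, Loreaux and Weiss \cite{Jasper}, where it is the main result of a research article --- so there is no internal proof to measure your attempt against; it must stand on its own, and as written it is an outline with genuine gaps in both directions. The more serious one is in necessity. After the (correct) phase-stripping reduction to $d_n=r_n\in[0,1]$ and the (correct) observation that $A=I-\Re U$ is positive and trace class with $\operatorname{tr}A=\sum_n(1-r_n)$, the entire content of the theorem is the single inequality $2\langle Au_{n_0},u_{n_0}\rangle\le\operatorname{tr}A$, and you stop exactly there, writing ``this is where the real work lies.'' That inequality is not a formal consequence of positivity and trace class (for a general positive trace-class $A$ and unit vector $x$ one can have $2\langle Ax,x\rangle>\operatorname{tr}A$); it has to be extracted from the full unitary structure, and neither of the routes you gesture at is a proof. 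The listed constraints ($\sum_kp_k\sin\theta_k=0$, $\sum_kp_k\cos\theta_k\ge 0$, $\sum_n\|P_ku_n\|^2=m_k$) are never combined into an argument, and the parenthetical alternative --- compress $U$ to a large coordinate subspace and invoke the finite-dimensional Thompson/Schur--Horn description --- fails as stated because the compression of a unitary to a coordinate subspace is only a contraction, whose diagonal satisfies no such inequality; quantifying the defect from unitarity and passing to the limit is precisely the technical work of \cite{Jasper}.

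The sufficiency half is in better shape as a skeleton: the reduction to real $r_n$, the greedy block decomposition when $\sum_n\de_n=\infty$, and the merge computation (including the persistence of the polygon inequality when the two smallest defects among $k\ge 4$ are merged) all check out. But three steps are asserted rather than proved: the $3\times3$ base case; the degenerate case $\de_i+\de_j>1$, where ``a cyclic permutation'' produces only the identically zero diagonal, not an arbitrary diagonal with all $r_l<1/2$; and, most importantly, the case $\sum_n\de_n<\infty$ with infinitely many nonzero defects, which is exactly the regime where the hypothesis $2\sup_n\de_n\le\sum_n\de_n$ is binding and where the theorem has content. ``A countable iteration of the merge, servicing the largest residual defect at each step'' is not an argument: one must show that the iteration converges to an everywhere-defined unitary on $H$ whose diagonal is exactly $(r_n)$, and this infinite limiting construction is the other main contribution of \cite{Jasper}. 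In short, your proposal correctly isolates the two hard points of the theorem but resolves neither.
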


A nice survey of recent developments in the theory of operator diagonals can be found in \cite{LW20}.
See also  \cite{MT} and a discussion of some recent applications of numerical ranges in \cite{MT_surv}, including the references therein.

In \cite{MT}, we have changed a perspective by describing the main diagonals for a given operator $T$ rather
than the set of possible main diagonals for operator classes.
Among other things, it was proved in \cite{MT} that if the essential numerical range of a bounded operator $T$ on $H$ has a non-empty interior,
then  a sequence from the interior 
is the main diagonal of  $T$ if it approaches the boundary not too fast, satisfying so-called Blaschke-type condition.
Such a condition is often optimal. For a more detailed discussion of some of the results from \cite{MT}, see Section \ref{results} below.

It seems that the methods of \cite{MT} opens a much wider venue than the one sketched in \cite{MT},
and we hope the results of this paper justify this claim.

\subsection{Banded matrices and matrices with special structure}
One of the basic advantages in dealing with operator matrices is that for several important classes of operators 
their matrices have so-called banded structure. Recall that for $n \in \mathbb N$
an operator is said to be $n$-diagonal if it is
 unitarily equivalent to a (finite or infinite) direct sum of (finite or
infinite) $n$-diagonal matrices. The $n$-diagonal operators are often called band-diagonal
when particular value of $n$ is not crucial.

It is well known (and easy to prove) that selfadjont operators are $3$-diagonal.
This fact constitutes a basis for the classical approach to the study of selfadjoint (mostly unbounded) operators
via associated $3$-diagonal Jacobi matrices. However, any $3$-diagonal unitary operator is diagonal.
In fact, any unitary operator is $5$-diagonal, and this number of diagonals is optimal.
The relevance of this fact for mathematical physics  
was recognized comparatively recently, mainly due to so-called CMV-representations developed in \cite{CMV} and \cite{CMV1},
see also \cite{Simon} for an additional insight. Of course, not only the mere fact of three or five diagonality of the matrix, but also the availability of concrete and convenient basis is important here.

However, band-diagonality is quite a rare phenomenon.
In particular, as proved in \cite{Shulman}, see also \cite{FoWu96}, every normal
operator with spectral measure not supported on a set of planar measure zero
is not band-diagonal, and so, in particular, the multiplication operator $(M f)(z)= z f(z)$ on $L^2(\mathbb D)$ on the unit disc $\mathbb D$
is not band-diagonal. Moreover, there are non-band-diagonal operators in the intersection of all Schatten $p$-classes for $p>2$,  and the set of all non-band-diagonal operators is dense in the space $B(H)$ of bounded linear operators on $H$.
At the same time the set of band-diagonal operators is not norm-dense in $B(H)$ being quite a small subset
of $B(H)$ in various senses. For a discussion of restrictions posed by band-diagonality
from the point of view of $C^*$-algebras, see e.g. \cite[Chapter 16]{BrownO}. 
An illuminating discussion of band-diagonality can be found in \cite{FoWu96}.

Another closely related topic concerns universal matrix representations with sparsified structure,
that is representations possessing many zeros. It seems such representations go back
to \cite{Weiss}, where they were called staircase representations.
One may prove that any $T \in B(H)$ admits a universal block three-diagonal
form with the exponential control on (finite-dimensional) block sizes:
\begin{equation*}\label{repres}
T = \begin{pmatrix}
D_1 &  U_1  &0 & \ldots      \\
L_1 & D_2& U_2&\ddots         \\          
 0 &L_2 &D_3 &\ddots           \\
\vdots &\ddots &\ddots &\ddots  
\end{pmatrix}.
\end{equation*}

 The representations are useful in commutator
theory, e.g. in the study of Pearcy-Topping problem on compact commutators. Their modern and pertinent discussion 
can be found in \cite{Pat}.  As examples of other applications of the three diagonal block representaions we mention
\cite{Muller_Studia}, where the Olsen lifting problem was treated, and \cite{FoWu93},
addressing representations of operators in $B(H)$ as linear combinations of operators of simple form (e.g. diagonal).  

The same issues of sparsifying and arranging certain arrays of zero (and not only) elements
in finite matrices still form a vast area of research. Being unable to give any
reasonable account we refer to \cite{HoSc07} and \cite{DoJo07} as an illustration of problems and approaches considered there.

Apart from  matrices with banded structure there was a research on finding a basis  $(u_n)_{n=0}^{\infty}\subset H$  such that the matrix $A_T:=(\langle T u_n, u_j \rangle)_{j,n=0}^\infty$
has a special canonical form. 
For instance, it would be instructive to recall that for any $T \in B(H)$ the property of cyclicity  can be recasted as the existence of $A_T$ of a special ``triangular$+1$'' form,
see \cite[p. 285-286]{Halmos-book}, and e.g. \cite[Theorem 5]{Douglas} for a related result.
Being unable to mention all of the relevant papers, we only note a fundamental work \cite{MPT} where selfadjoint $T \in B(H)$ having a Hankel matrix  in some  $(u_n)_{n=0}^{\infty}$ (i.e. $\langle T u_n, u_j \rangle=\alpha_{n+j}$ for a sequence $(\alpha_n)_{n=0}^{\infty}\subset \mathbb R_+$) were characterized in spectral terms. The criteria is rather demanding and thus very
different from our assumptions, which are minimal in a sense. 

\subsection{Big matrices}

One of the basic results in the classical harmonic analysis, due to de Leeuw, Kahane, and Katznelson,
says that for any $(a_n)_{n\in \mathbb Z} \in \ell^2(\mathbb Z)$ there exists a periodic $f \in C([0,2\pi])$ such that the Fourier coefficients
$(\hat f_n)_{n \in \mathbb Z}$ of $f$ satisfy
$|\hat f_n|\ge |a_{n}|, n \in \mathbb Z,$
so that $L^2([0,2\pi])$-functions are indistinguishable from $C([0,2\pi])$-functions by the size of their
Fourier coefficients. Later on, numerous extensions of this result were found for other classes of $f$ including also
some spaces of functions analytic in $\mathbb D,$ see e.g \cite{Ball} for the overview of statements of that flavor,
 called plank type
in view of similarity of the employed methods to the ones in Bang's theorem on covering convex body by planks.

On the way to obtaining noncommutative counterparts of the above domination result, Lust-Piquard
proved in \cite{Lust} the next elegant theorem on a possible size of matrices of bounded Hilbert space operators (which can be formulated for matrices in both $\mathbb Z$- and $\mathbb N$-settings, and we prefer the latter convention).
\begin{theorem}\label{lust}
For every matrix of complex numbers $A=(a_{jn})_{j,n=1}^\infty$  such that
\begin{equation}\label{necess}
\|A\|_{\ell_\infty(\ell_2)}:=\sup_{n \ge 1}(\sum_{j \ge 1} |a_{nj}|^2)^{1/2}<\infty \qquad \text{and}\qquad  \|A^*\|_{\ell_\infty(\ell_2)}<\infty,
\end{equation}
where $A^*$ stands for the conjugate transpose of $A,$ there exists $T\in B(H)$ and an orthonormal basis $(u_n)_{n=1}^{\infty}\subset H$ such that 
\begin{equation}\label{plank1}
\|T\| \leq K\max\{\|A\|_{\ell_\infty(\ell_2)},\|A^*\|_{\ell_\infty(\ell_2)}\} \qquad \text{and} \qquad 
|\langle Tu_n, u_j \rangle|\geq |a_{nj}|
\end{equation}
for all $n,j \ge 1$, where $K$ is an absolute constant.
\end{theorem}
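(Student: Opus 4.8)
The necessity of condition \eqref{necess} is immediate: row $j$ of $A_T$ is the sequence $(\langle u_n,T^*u_j\rangle)_n$ of Fourier coefficients of $T^*u_j$, so its $\ell^2$-norm equals $\|T^*u_j\|\le\|T\|$, and symmetrically for columns; hence any matrix dominated by $A_T$ as in \eqref{plank1} automatically satisfies \eqref{necess}. For the substantive direction, note that choosing the orthonormal basis $(u_n)$ merely amounts to identifying $T$ with its matrix $(\langle Tu_n,u_j\rangle)$; thus the theorem is equivalent to the purely matricial statement: given $A=(a_{jn})$ with row and column $\ell^2$-norms bounded by $M$, produce a matrix $B=(b_{jn})$ of a bounded operator on $\ell^2$ with $\|B\|\le KM$ and $|b_{jn}|\ge|a_{jn}|$ for all $j,n$ (the transpose is irrelevant since \eqref{necess} is symmetric). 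After normalizing $M=1$ and replacing each $a_{jn}$ by $|a_{jn}|$, I may assume $a_{jn}\ge 0$.

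The plan is to look for $B$ of the form $b_{jn}=\varepsilon_{jn}a_{jn}$ for a well-chosen sign matrix $(\varepsilon_{jn})\in\{\pm1\}^{\mathbb N\times\mathbb N}$. For a \emph{fixed} pair of unit vectors $x,y\in\ell^2$, Khintchine's inequality together with the hypothesis gives $\mathbb E_\varepsilon|\langle Bx,y\rangle|\le\bigl(\sum_{j,n}a_{jn}^2|x_n|^2|y_j|^2\bigr)^{1/2}\le 1$, with sub-Gaussian concentration. The difficulty is that $\|B\|$ is a supremum over the non-compact unit sphere, so a union bound is unavailable directly. First I would pass to the finite corners $A_N=P_NAP_N$: since compressions do not increase the operator norm, one has $\|Bx\|=\lim_N\|P_NBP_Nx\|\le\sup_N\|P_NBP_N\|\cdot\|x\|$ for every finitely supported $x$ (here one uses that the rows of $A$ lie in $\ell^2$), so it suffices to find a single $\varepsilon$ with $\|\varepsilon\odot A_N\|\le K$ for all $N$. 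By compactness of $\{\pm1\}^{\mathbb N\times\mathbb N}$ and the fact that the sets $G_N=\{\varepsilon:\|\varepsilon\odot A_N\|\le K\}$ are closed and decreasing in $N$, their intersection is nonempty as soon as each $G_N$ is; everything thus reduces to the finite-dimensional assertion: \emph{every finite matrix whose rows and columns have $\ell^2$-norm at most $1$ admits an entrywise sign change of operator norm at most $K$, with $K$ an absolute constant.}

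This finite sign-balancing estimate is where the real work lies, and I expect it to be the main obstacle. A crude $\delta$-net of the sphere $S^{2N-1}$ has $e^{O(N)}$ points, which overwhelms the sub-Gaussian tails and only yields $\|\varepsilon\odot A_N\|\lesssim\sqrt N$. Two ways past this suggest themselves: a chaining (Dudley-type) estimate for the Rademacher process $(x,y)\mapsto\langle(\varepsilon\odot A)x,y\rangle$, exploiting that its increments are governed by $\bigl(\sum a_{jn}^2|x_n|^2|y_j|^2\bigr)^{1/2}$, which is genuinely small for spread-out $x,y$; or a constructive choice of $\varepsilon$ via a partitioning argument in the spirit of discrepancy theory or restricted invertibility, whose building block is the elementary observation that a matrix with at most one nonzero entry in each column (or in each row) and rows of $\ell^2$-norm $\le 1$ has operator norm $\le 1$ --- one would then want to split $A$ into a bounded number of such near-diagonal pieces, or into countably many recombined with an $O(1)$ loss through orthogonality, and this combinatorial step is the crux. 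I would also keep in mind that pure signing may be insufficient, in which case one allows $|b_{jn}|>a_{jn}$ and fills in vanishing positions of $A$, exploiting the infinitude of coordinates to complete each row of $A$ to an honest row-isometry and, dually, each column, and then interpolating between the two completions; this complicates the consistency required for the compactness step but stays within the same circle of ideas. Finally, an alternative global strategy is to bootstrap from the classical de Leeuw--Kahane--Katznelson theorem and its successive-approximation method, the genuinely new point being, once again, the control of the operator norm, a supremum over the full sphere rather than the supremum over $\mathbb T$ that appears in the $C(\mathbb T)$-norm.
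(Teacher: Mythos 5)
First, a point of orientation: Theorem \ref{lust} is not proved in this paper at all --- it is quoted from Lust-Piquard \cite{Lust} as background for the authors' own results --- so your argument has to stand entirely on its own. Your preparatory reductions are fine: the necessity of \eqref{necess}, the passage to the purely matricial statement, the normalization $a_{jn}\ge 0$, and the compactness argument (the sets $G_N$ are clopen in $\{\pm1\}^{\mathbb N\times\mathbb N}$ and decreasing, so a uniform finite-dimensional bound does yield one sign matrix working for all corners, hence a bounded operator). But the argument then funnels everything into the assertion that every finite matrix with all row and column $\ell^2$-norms at most $1$ admits an entrywise signing of operator norm at most an absolute constant, and this assertion is not proved; you yourself flag it as the crux. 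This is a genuine gap, and moreover the signing route is very likely a dead end rather than a technical obstacle: by a theorem of Seginer on random matrices with a deterministic profile and i.i.d.\ Rademacher signs, one has $\mathbb{E}\,\|\varepsilon\odot A\|\le C(\log n)^{1/4}\max\bigl(\sup_j\|A_{j\cdot}\|_2,\ \sup_n\|A_{\cdot n}\|_2\bigr)$ and the factor $(\log n)^{1/4}$ cannot be removed, so the first-moment and concentration estimates you invoke fail already at the level of expectations, not merely because of the union bound over the sphere. Whether some cleverly chosen deterministic signing always achieves an $O(1)$ norm is a discrepancy-type question that you do not address and that, to my knowledge, is not known to have a positive answer.

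The slack in the conclusion --- $|b_{jn}|\ge|a_{jn}|$ rather than equality, with entries permitted at positions where $a_{jn}=0$ --- is therefore not an optional refinement but the essential point. Lust-Piquard's proof exploits it from the start: it is a successive-approximation construction modeled on the de~Leeuw--Kahane--Katznelson argument, in which one builds $T$ as a convergent series of operators, at each stage boosting the entries whose moduli have fallen below the target while controlling the operator norm of the correction by a Khintchine-type inequality adapted to \eqref{necess}; the final moduli genuinely exceed the prescribed ones. You mention this fallback, as well as chaining and discrepancy alternatives, but none of them is carried out, so what you have is a correct reduction followed by a plan whose central step is missing.
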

Clearly, the assumption  \eqref{necess} is necessary for any estimates as \eqref{plank1} to hold,
since \eqref{necess} is satisfied by $A_T=(\langle Tu_n, u_j \rangle)_{j,n=1}^\infty$ in view of $T \in B(\ell^2(\mathbb N)).$
Apart from being instructive as such, Theorem \ref{lust} appeared to be crucial in the characterization of wide classes of 
Schur multipliers on $B(\ell^2),$ see e.g. \cite{Davidson}.

\subsection{Small matrices}

Given a basis $U=(u_n)_{n=1}^\infty \subset H$ and  $T, S \in B(H),$  the entrywise product  $A_T*A_S$ of $A_T$ defines  $R \in B(H)$ such that $A_R = A_T * A_S$ and $\|R\|\le \|T\| \|S\|.$ With such a product, called Schur multiplication,
 $B(H)$ becomes a commutative Banach algebra $B_U(H),$  called respectively Schur algebra. Evidently, the properties of $B_U(H)$ are related
to the choice of $U$, and this relation is highly non-trivial. 
The Schur algebras, along with related notion of Schur multiplier, are among domains of research, 
where matrix representations appear naturally as a part of the very first definitions and surrounding basic results.
Without going into details of this rather involved area, we emphasize a result not referring to specific notions and 
important for our further considerations.

In his study of the Schur multiplication on $B(H)$,  Stout discovered in \cite{Stout} that if $\langle T e_n, e_n\rangle \in c_0(\mathbb N)$ for some orthonormal set $(e_n)_{n=1}^{\infty},$
i.e., $0$ belongs to the essential numerical range $W_e(T)$ of $T$ (in fact, Stout used a different definition of $W_e(T)$),  then the size of matrix elements $\langle Tu_n, u_j\rangle$ can also be made small
for an appropriate basis $(u_n)_{n=1}^{\infty}.$ More precisely, the next theorem holds, see \cite[Theorem 2.3]{Stout}.
\begin{theorem}\label{stout}
Let $T \in B(H).$ Then the following properties are equivalent.
\begin{itemize} 
\item [(i)] $0\in W_e(T);$
\item [(ii)] For every $\epsilon >0$ there exists an orthonormal basis $(u_n)_{n=1}^\infty$ such that $|\langle T u_n, u_j \rangle|<\epsilon$ for all $n,j \in \mathbb N;$ 
\item [(iii)] For any  $(a_n)_{n=1}^\infty\subset (0,\infty)$ satisfying  $(a_n)_{n=1}^\infty\not \in \ell^1(\mathbb N)$ there exists an orthonormal basis $(u_n)_{n=1}^\infty$ in $H$ such that
\begin{equation}\label{iii}
|\langle T u_n, u_n \rangle| \le a _n 
\end{equation}
 for all $n \in \mathbb N.$
\end{itemize}
\end{theorem}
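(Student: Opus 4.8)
The plan is to establish the chain of implications $(i)\Rightarrow(iii)\Rightarrow(ii)\Rightarrow(i)$, with the heart of the matter being $(i)\Rightarrow(iii)$, from which $(ii)$ follows by a standard diagonal/rotation trick and $(i)$ follows easily by contraposition.

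For $(i)\Rightarrow(iii)$, assume $0\in W_e(T)$, so there is an orthonormal sequence $(e_n)$ with $\langle Te_n,e_n\rangle\to 0$; passing to a subsequence we may also assume $\langle Te_n, e_m\rangle\to 0$ for $n\ne m$ fixed and, more usefully, that the ``tails'' $\langle Te_n, e_m\rangle$ are small when $\min(n,m)$ is large. Given $(a_n)\notin\ell^1$, I would build the desired basis $(u_n)$ by a block construction: partition $\NN$ into consecutive finite blocks $I_1, I_2, \dots$, and on the span of $\{e_n : n\in I_k\}$ perform a unitary change of coordinates (a suitable averaging/Fourier-type mixing of the $e_n$'s over the block) so that the new diagonal entries over block $I_k$ are all comparable to the block-average of the old diagonal entries, which is small. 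The divergence $(a_n)\notin\ell^1$ is exactly what guarantees that the blocks can be chosen long enough: if $\sum_{n\in I_k}a_n$ is bounded below (say $\ge 1$) while the size of $\langle Te_n,e_n\rangle$ on $I_k$ is controlled, then averaging forces each new diagonal entry on $I_k$ below the corresponding $a_n$. One must also handle the off-diagonal ``bleakage'' between blocks and the fact that $(e_n)$ need not be a basis: complete $(e_n)$ to a basis using an orthonormal basis $(f_m)$ of the orthogonal complement, and interleave the $f_m$'s sparsely among far-apart blocks so that they do not spoil the diagonal estimates on the $e$-blocks (this is where one really uses that only \emph{some} orthonormal set, not a basis, witnesses $0\in W_e(T)$, together with the freedom in how fast $(a_n)$ must be matched).

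For $(iii)\Rightarrow(ii)$: given $\epsilon>0$, apply $(iii)$ with $a_n=\epsilon^2/(2^{?})$— more precisely, it suffices to get the diagonal entries small, and then a second rotation within $2\times2$ (or larger) blocks converts smallness of the diagonal into smallness of all entries. Concretely, if $\langle Tu_n,u_n\rangle$ is tiny for all $n$, group the basis into pairs and rotate each pair by $45^\circ$; the new off-diagonal entries within a pair become $\tfrac12(\langle Tu_{2k},u_{2k}\rangle-\langle Tu_{2k+1},u_{2k+1}\rangle)\pm\tfrac12(\langle Tu_{2k+1},u_{2k}\rangle+\langle Tu_{2k},u_{2k+1}\rangle)$, so one needs the off-diagonal entries small too — hence one should instead iterate: the block-averaging argument of $(i)\Rightarrow(iii)$ can be run so that \emph{all} matrix entries over a block, not just the diagonal ones, become small, because averaging $Te_n$ over a long block and pairing against another far-away averaged vector produces an inner product that is an average of many small-in-tail quantities. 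In fact the cleanest route is to prove a single lemma — for every $\epsilon>0$ and every finite-codimensional subspace there is a long block and a unitary on it making all entries $<\epsilon$ — and then deduce both $(ii)$ and $(iii)$ from it by choosing the tolerances $\epsilon_k\to 0$ (for $(ii)$, take $\epsilon_k=\epsilon$ fixed; for $(iii)$, exploit $\sum a_n=\infty$ to let block $k$ absorb tolerance on the order of $\min_{n\in I_k}a_n$).

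Finally $(ii)\Rightarrow(i)$ is the easy direction: if for each $\epsilon$ there is an orthonormal basis with all $|\langle Tu_n,u_j\rangle|<\epsilon$, then in particular the diagonal $\langle Tu_n,u_n\rangle\to 0$ along a subsequence (indeed is uniformly $<\epsilon$), so letting $\epsilon\to 0$ and extracting vectors produces an orthonormal sequence on which the diagonal tends to $0$, i.e.\ $0\in W_e(T)$ by the standard characterization of the essential numerical range. The main obstacle is the block-averaging lemma in $(i)\Rightarrow(iii)$: one must control simultaneously (a) the within-block diagonal after mixing, (b) the within-block and between-block off-diagonal leakage, and (c) the completion of the witnessing orthonormal set to a genuine basis, all while only assuming the very weak hypothesis $0\in W_e(T)$ and matching against an arbitrary non-summable sequence $(a_n)$; the key quantitative input is that an average of $N$ vectors each of norm $\le\|T\|$, after the Cauchy–Schwarz/orthogonality bookkeeping, contributes to a given matrix entry only $O(\|T\|/\sqrt N)$ plus genuinely small tail terms, so choosing the blocks long enough drives everything below the prescribed thresholds.
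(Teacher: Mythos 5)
There is a genuine gap in the hard implication $(i)\Rightarrow(iii)$, and it sits exactly at the central claim of your block construction. You assert that if $\sum_{n\in I_k}a_n\ge 1$ while the old diagonal entries on the block are controlled, ``then averaging forces each new diagonal entry on $I_k$ below the corresponding $a_n$.'' This is false. A Fourier-type mixing of a block of length $N$ produces new diagonal entries that are all essentially the \emph{same} number, namely the block average of the old diagonal plus error terms of order $\|T\|/\sqrt N$; to satisfy $|\langle Tu_n,u_n\rangle|\le a_n$ at \emph{every} position of the block this single number must beat $\min_{n\in I_k}a_n$, which the hypothesis $\sum_{n\in I_k}a_n\ge 1$ does not control at all (take $a_n=1/n$ for even $n$ and $a_n=2^{-2^n}$ for odd $n$: non-summable, yet every block of consecutive integers contains a target far below any fixed $1/\sqrt N$). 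Lengthening the block to push $O(1/\sqrt N)$ below $\min_{n\in I_k}a_n$ drags in new positions with still smaller targets, a circularity with no escape. You have the role of $\sum a_n=\infty$ backwards: in Stout's proof, and in the paper's proof of the stronger Theorem \ref{stoutt} (the paper only cites Theorem \ref{stout} and proves it via \ref{stoutt}), non-summability is not used to make blocks ``long enough'' but to make the completeness-forcing perturbations non-square-summable. There one sets $u_n=\sqrt{1-c_n^2}\,v_n+c_nw_n$, where $v_n$ is a fresh unit vector orthogonal to all previously used $u_j,Tu_j,T^*u_j$ with $|\langle Tv_n,v_n\rangle|$ as small as desired (this is what $0\in W_e(T)$ buys on finite-codimensional subspaces), $w_n$ is the normalized projection of a member of a total sequence $(y_m)$, and $c_n^2\asymp a_n'$ for a reduced sequence $a_n'\le\min\{1,a_n\}$ still not in $\ell^1$ (Lemma \ref{lone}); then $|\langle Tu_n,u_n\rangle|\le a_n'/2+c_n^2\|T\|\le a_n$ no matter how small the individual $a_n$ is, while $\sum c_n^2=\infty$ drives $\|(I-P_n)y_m\|\to 0$ and gives a basis.

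The same defect infects your completion step: interleaving an orthonormal basis $(f_m)$ of $(\bigvee_n e_n)^\perp$ ``sparsely'' leaves diagonal entries $\langle Tf_m,f_m\rangle$ (which may all equal $\|T\|$) at positions where $a_n$ can be arbitrarily small, and folding each $f_m$ into a long block only yields the same insufficient $O(\|T\|/\sqrt N)$ bound. Your averaging idea, with the Cauchy--Schwarz bookkeeping you describe, is essentially sound for proving $(i)\Rightarrow(ii)$ alone, where a single uniform tolerance $\e$ is to be met; it cannot deliver the pointwise bound in $(iii)$. Two smaller points: you correctly notice mid-proof that $(iii)\Rightarrow(ii)$ fails (a diagonal bound says nothing about off-diagonal entries after rotations), but after retreating to deriving $(ii)$ and $(iii)$ separately from $(i)$ you never close the equivalence with $(iii)\Rightarrow(i)$ --- that step is trivial (take $a_n=1/n$), but it must be said.
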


For motivation of Theorem \ref{stout}, and its relation to the structure of Schur algebars, see \cite{Stout} and \cite{Stout1}.
The statements similar but slightly weaker than Theorem \ref{stout} appeared to be crucial for the study of various matrix norms on $B(H),$
in particular for comparing them to each other and also to the operator norm on $B(H).$ See e.g. \cite{FoRaRo87} for this direction of studies.

\subsection{Order properties}

Quite often, the order properties of matrix elements prove to be useful. In particular, Radjavi and Rosenthal showed in \cite{RaRo}  that for any $T \in B(H)$
there exists a selfadjoint $S \in B(H)$ such that $T$ and $S$ have no common invariant subspaces, and as a consequence 
the operators $T$ and $S$ generate $B(H).$ A key step in their approach is to note that
if $T \in B(H)$ is not a multiple of the identity, then there exists an orthonormal basis $\{u_n\}_{n=1}^{\infty}$ 
such that $\langle T u_n,u_j \rangle  \neq 0$ for all $n$ and $j$. In fact, the statement is true even for a sequence of bounded operators $(T_k)_{k=1}^\infty$ on $H$ rather than a single operator $T$. This matrix statement was used frequently in similar contexts. 

In the study of cyclicity and multi-cyclicity phenomena, the next observation due to Deddens played an important role:
if for $T \in B(H)$ there exists a basis $(u_n)_{n=1}^{\infty}$ in $H$ such that  $\langle T u_n, u_j \rangle$ are real for all 
$n$ and $j \in \mathbb N$, then $T \oplus T^*,$ where $T^*$ is adjoint of $T,$ is not cyclic. In particular, if $S$ is the unilateral shift on $H^2(\mathbb D),$ 
then $S\oplus S^*$ is not cyclic. For more on that see [26] (and also \cite[p. 283]{Halmos}), though rather unfortunately bases leading 
to ``real'' matrix representations of $T$  have not been studied subsequently in the literature.

\subsection{Halmos problem}

The next problem appeared while developing the theory of integral operators, but it is quite natural as such, e.g. 
in the study of Schur multiplication and associated Schur operator algebras on Hilbert spaces, see e.g. \cite{Stout}.
Let us say that $T \in B(H)$ is absolutely bounded if the matrix $|A_T|:=(|\langle Tu_n, u_j \rangle|)_{j,n=1}^\infty$
defines a bounded operator on $\ell^2(\mathbb N)$ for some orthonormal basis $(u_n)_{n=1}^\infty$ in $H,$ and totally absolutely bounded if 
$|A_T| \in B(\ell^2(\mathbb N))$ for any orthonormal basis $(u_n)_{n=1}^\infty$ in $H.$
Clearly, not every $T \in B(H)$ is absolutely bounded. The simplest example is probably
$T$ on $\ell^2(\mathbb N)$ given by the so-called Hilbert matrix ($a_{j,n})_{j,n \ge 1},$ where $a_{jn}=(j-n)^{-1}$ for $n \neq j$ and $a_{nn}=0$,
see e.g. \cite[Example 10.1]{Halmos_S}. In \cite{Halmos} Halmos asked for a characterization of absolutely bounded and totally absolutely bounded $T$  (see \cite{Halmos_S} for more on these notions and their motivations.) Independently and almost simultaneously, it was proved in \cite{So78} and \cite{Su78} that $T$ is totally absolutely bounded
if and only if $T=\lambda + S,$ where $\lambda \in \mathbb C$ and $S$ is a Hilbert-Schmidt operator. 
However, the description of absolutely bounded $T$ is still out of reach, though apart from Halmos, the problem was posed explicitly 
in  \cite{So78}, \cite{Stout}, \cite{Su78},  and \cite{Su81}. A discussion of this and related matters can be found in \cite[Chapter 10 and Theorem 16.3]{Halmos_S}. 
Obviously, the problem reflects the current lack of understanding on how the entries of $A_T$ may change when $(u_n)_{n=1}^{\infty}$ is varying. Another illustration of this problem is the study of operator diagonals discussed above, 
where the explicit description of the set of all diagonals  is known
for very particular choices of $T,$ even if $T$ is selfadjoint,
let along the description of such a set 
for fixed $T.$

\section{Tools, results and strategy}\label{results}
\subsection{Numerical ranges}
In our studies of matrix representations for a bounded operator $T$ on a separable (complex)  infinite-dimensional Hilbert space $H$
we will rely on elementary  properties of its numerical range $W(T):=\{\langle Tx, x\rangle: \|x\|=1\}$ and its essential numerical range
$W_e(T).$ The main property of  $W(T)$ is that it is always a convex subset of $\mathbb C$, and moreover the spectrum $\sigma (T)$ of $T$ is contained in the closure
$\overline{W(T)}$ of $W(T).$ However, $W(T)$ can be neither closed nor open.

Being only an approximate analogue of $W(T),$ the essential numerical range possess however much nice properties.
Recall that for $T \in B(H)$ its essential numerical range $W_e(T)$ can be defined
as
\begin{equation}\label{we}
W_e(T):=\{\lambda \in \mathbb C: \langle T e_n, e_n \rangle \to \lambda, \,\, n \to \infty\}.
\end{equation}
for some orthonormal sequence $(e_n)_{n=1}^{\infty}\subset H.$
In fact, an orthonormal sequence in \eqref{we} can be replaced by an orthonormal basis, see e.g. Theorem \ref{stout}, (ii).
Alternatively, $\lambda \in W_e(T)$ if and only if for every $\epsilon >0$ and every subspace $M$ of finite codimension there exists a unit vector $x \in M$ such that
$|\langle Tx, x \rangle-\lambda|<\epsilon.$ 
Clearly, $W_e(T) \subset \overline{W(T)}.$
For any $T \in B(H),$ the set $W_e(T)$ is non-empty, compact and convex, and moreover, $W_e(T)$ contains the essential spectrum $\sigma_e(T)$ of $T.$
Thus, in view of the convexity of $W_e(T),$ 
\begin{equation}\label{spectrum}
{\rm conv} \sigma_e(T)\subset W_e(T),
\end{equation}
where ${\rm conv} \sigma_e(T)$ denotes  
the convex hull  of $\sigma_e(T).$
Since  for any contraction $T$ with $\sigma(T)\supset \mathbb T$ (e.g. unilateral or bilateral shift) one has $\sigma_e(T)\supset \mathbb T,$ from \eqref{spectrum} it follows that
$\mathbb D$ belongs to the interior $\Int \, W_e(T)$ of $W_e(T).$ So $\Int \, W_e(T)$ is as large as possible in this case.
The convexity of $W(T)$ also implies that ${\rm Int}\, W_e(T)\subset W(T).$ Since
$W_e(T)=W_e((I-P)T(I-P))$ for every finite rank projection $P,$ one has 
\begin{equation}\label{inc_w}
{\rm Int}\, W_e(T) \subset W((I-P)T|_{(I-P)}). 
\end{equation}
Thus, to be able to find a fixed   $\lambda \in W(T)$ in the numerical range of any restriction of $T$ to a finite-codimensional subspace of $H,$ it is natural
to  assume that $\lambda \in {\rm Int}\, W_e(T).$
(Though the latter is not necessary for the former as the example of the zero operator shows.)

The  property \eqref{inc_w} is vital in various inductive arguments given below. Note that,
moreover, $\lambda \in \Int W_e(T)$ implies that there exists an infinite rank (orthogonal) projection $P$ such that
$PTP=\lambda P.$ Hence $\lambda \in \Int W_e(T)$ yield infinite-dimensional $\lambda$-compressions of $T.$
The basic properties of $W_e(T)$ can be found e.g. in \cite[Chapter 7.34]{Bonsall}, see also \cite{Anderson}, \cite{Fillmore} and \cite{Stampfli}. Some of their analogons for tuples of bounded operators are proved in \cite{Li-Poon}. A 
 unified approach to essential numerical range for operator tuples and other related numerical ranges
has been provided in  \cite[Section 5]{MT_LMS}. 
 
Finally, for auxiliary estimates, we will need the next  plank type result from \cite[Theorem 2]{Ball}.
\begin{theorem}\label{plank} Let $(u_j)_{j=1}^{n} \subset H$ be a tuple of unit vectors, and let 
 $(a_j)_{j=1}^{n} \subset [0,\infty)$ be such that $\sum_{j=1}^n a_j^2=1.$
Then there exists a unit vector $v \in H$ such that 
$|\langle v, u_j \rangle| \ge a_j$
for all $j.$
\end{theorem}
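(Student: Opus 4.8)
The plan is to reduce to finite dimensions and then solve the finite-dimensional problem by a variational argument on the unit sphere. Since $H_0:=\operatorname{span}\{u_1,\dots,u_n\}$ lies inside $H$, it suffices to produce a suitable unit vector inside $H_0$, so one may assume $H=H_0$, of dimension at most $n$. Discarding the indices with $a_j=0$ and, if necessary, shrinking the remaining weights (which only relaxes the inequalities to be proved), one may also assume $a_j>0$ for all $j$.

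On the unit sphere $S$ of $H$ I would maximize $\Phi(w):=\sum_{j=1}^{n}a_j^2\log|\langle w,u_j\rangle|$, equivalently $\prod_{j=1}^n|\langle w,u_j\rangle|^{2a_j^2}$. This $\Phi$ is upper semicontinuous and not identically $-\infty$ (the sphere is not covered by the finitely many proper subspaces $\{\langle\cdot,u_j\rangle=0\}$), hence attains its supremum at some $w\in S$; at a maximizer $\beta_j:=\langle w,u_j\rangle\ne0$ for all $j$ and $\Phi$ is smooth near $w$. To read off the Euler--Lagrange equation I would vary along the holomorphic curves $z\mapsto(w+z\xi)/\|w+z\xi\|$ with $\xi\perp w$: using $\sum_j a_j^2=1$,
\[
\Phi\bigl((w+z\xi)/\|w+z\xi\|\bigr)=\operatorname{Re}h(z)-\tfrac12\log\bigl(1+|z|^2\|\xi\|^2\bigr),\qquad h(z):=\sum_{j}a_j^2\log\bigl(\beta_j+z\langle\xi,u_j\rangle\bigr),
\]
with $h$ holomorphic near $0$. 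The correction term has vanishing gradient at $z=0$, so maximality gives $h'(0)=0$ for every $\xi\perp w$, hence
\[
w=\sum_{j=1}^{n}\frac{a_j^2}{\overline{\beta_j}}\,u_j ,
\]
and pairing with $w$ reproduces $\sum_j a_j^2=1$. In particular $\langle w,u_j/\overline{\beta_j}\rangle=1$ for each $j$, so $w$ is the point of least norm on the real hyperplane $\{v:\operatorname{Re}\langle v,w\rangle=1\}$, which contains every $p_j:=u_j/\overline{\beta_j}$, and $w=\sum_j a_j^2 p_j$ is a convex combination of the $p_j$.

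The crux --- and the step I expect to be the main obstacle --- is to show that this maximizer already satisfies $|\beta_j|\ge a_j$, equivalently $\|p_j\|=1/|\beta_j|\le1/a_j$, for every $j$. The natural tool is the \emph{second-order} maximality condition: expanding the displayed identity to second order (with $h'(0)=0$) and weighing the harmonic term $\tfrac12\operatorname{Re}\bigl(h''(0)z^2\bigr)$ against the strictly superharmonic correction $\tfrac12\|\xi\|^2|z|^2$ forces
\[
\Bigl|\sum_{j}a_j^2\frac{\langle\xi,u_j\rangle^2}{\beta_j^2}\Bigr|\le\|\xi\|^2\qquad\text{for every }\ \xi\perp w .
\]
Testing this with $\xi=p_j-w$ (for which $\|\xi\|^2=|\beta_j|^{-2}-1$, while the $j$-th summand equals the positive number $a_j^2(1-|\beta_j|^2)^2|\beta_j|^{-4}$) and combining with the first-order relations above should isolate $|\beta_j|\ge a_j$ up to the contribution of the cross terms $\sum_{k\ne j}$; keeping those cross terms under control is the genuine difficulty, since a single test direction does not appear to close the gap. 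One would then have to exploit the whole family of second-order inequalities at once, or --- as in \cite{Ball} --- replace this local variational endgame by a more global, plank-covering style argument. A purely inductive alternative (apply the case $n-1$ to the normalized projections of $u_1,\dots,u_{n-1}$ onto $u_n^{\perp}$, then tilt the resulting vector toward $u_n$) runs into the mirror-image obstacle: the cross inner products $\langle u_n,u_k\rangle$ consume precisely the slack $\sum_{k<n}a_k^2=1-a_n^2$, so the tilt must be engineered rather than read off the inductive hypothesis.
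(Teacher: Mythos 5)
First, note that the paper does not prove Theorem \ref{plank} at all: it is quoted verbatim from \cite[Theorem 2]{Ball} and used as a black box in the proof of Theorem \ref{polynom}. So the only meaningful comparison is with Ball's proof, whose first stage your proposal reproduces correctly: reduce to $\dim H\le n$ and $a_j>0$, maximize $\prod_j|\langle w,u_j\rangle|^{2a_j^2}$ over the unit sphere (the supremum is attained and is positive), and extract the first-order condition $w=\sum_j a_j^2\,u_j/\overline{\beta_j}$ with $\beta_j=\langle w,u_j\rangle\ne 0$. All of that is sound, including the normalization $\sum_j a_j^2=1$ that makes the correction term $-\tfrac12\log(1+|z|^2\|\xi\|^2)$ appear with coefficient $1$.

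The proof is nonetheless incomplete, and the gap is exactly where you flag it: nothing in the write-up establishes $|\beta_j|\ge a_j$ at the maximizer. The second-order inequality $\bigl|\sum_j a_j^2\langle\xi,u_j\rangle^2/\beta_j^2\bigr|\le\|\xi\|^2$ tested on a single direction $\xi=p_j-w$ does not close the argument (the cross terms can overwhelm the $j$-th term), and you correctly observe this; but you then leave the endgame as a conjecture about ``exploiting the whole family of second-order inequalities at once,'' which is not a proof. Ball's actual endgame is different in kind: assuming $|\beta_1|<a_1$, he does not use second-order conditions at all, but instead moves along a specific one-complex-parameter family of the form $w\mapsto (w+z\,u_1/\overline{\beta_1})/\|\cdot\|$ (a tilt toward the offending vector, not an orthogonal variation) and shows, via a dedicated scalar inequality for the function $z\mapsto|1+z|^{t^2}|1-t^2z|$ with $t=a_1/|\beta_1|>1$ --- an inequality that is true over $\CC$ but fails over $\RR$, which is why the real plank theorem does not have this sharp Hilbert-space form --- that the weighted product strictly increases for a suitable small $z$, contradicting maximality. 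That lemma (or some substitute for it) is the mathematical content of the theorem; without it the variational setup proves nothing. As written, the proposal is a correct reduction plus an honest description of the obstacle, not a proof.
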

Note that, in fact, plank type results similar to the theorem above motivated their non-commutative generalisations in \cite{Lust},
e.g. Theorem \ref{lust} mentioned above and its more general versions.

\subsection{Results}

Extending and complementing various results on operator diagonals from the literature, 
it was proved in \cite[Corollary 4.3]{MT} that for every $T\in B(H)$ and every $(\la)_{n=1}^\infty \subset\Int W_e(T)$ satisfying  
\begin{equation}\label{blaschke}
\sum_{n=1}^\infty\dist\{\la_n,\partial W_e(T)\}=\infty,
\end{equation}
 there exists 
 an orthonormal basis $(u_n)_{n=1}^\infty$ in $H$ such that $\langle T u_n, u_n \rangle=\la_n, n \in \mathbb N.$ 
In fact, most of results in \cite{MT} were obtained in a slightly more general setting of operator tuples $\cT \in B(H)^m, m \in \mathbb N.$ Moreover, the results were formulated in terms of the relative interior of $W_e(\cT)$ 
instead of the interior of $W_e(\cT)$ to include  degenerate situations when the relative interior of $W_e(\cT)$ is non-empty, see Section \ref{final} for more on that.
The assumption \eqref{blaschke}, introduced in \cite{MT} and called there \emph{non-Blaschke type}, is close to be optimal and allows one to construct diagonals for general $T \in B(H).$ Moreover, it has operator-valued counterparts leading to construction
of operator-value diagonals and generalisations of the main results from \cite{B}.

It is natural to ask whether \eqref{blaschke} has further implications and can be used to preassign a part of the matrix of $T$ larger than the main diagonal.
Besides producing the main diagonal $(\langle T u_n, u_n\rangle)_{n=1}^{\infty}$  
by \eqref{blaschke}, we prove that under \eqref{blaschke} the matrix of $T$ can be sparsified by arranging zero matrix elements in any band 
outside of $(\langle T u_n, u_n\rangle)_{n=1}^{\infty}$. 
The result is opposite in a sense to the series of results concerning matrices with banded structure discussed in the introduction.
The obtained sparsification is rather mild, and in this sense the result is certainly  weaker than the results on banded structure.
On the other hand, it concerns general $T \in B(H)$ rather
than very specific classes of $B(H)$ (normal, unitary, selfadjoint), and it complements the results 
on banded matrix representations. 

\begin{theorem}\label{band}
Let $T\in B(H)$, and let $(\la_n)_{n=1}^\infty \subset\Int W_e(T)$ be such that $\sum_{n=1}^\infty\dist\{\la_n,\partial W_e(T)\}=\infty$. 
Then for every  $K \in \mathbb N$ there exists an orthonormal basis $(u_n)_{n=1}^\infty$ in $H$ such that
\begin{equation}\label{diag}
\langle Tu_n,u_n\rangle=\la_n,  \qquad n\in\NN,
\end{equation}
and
\begin{equation}\label{nondiag}
\langle Tu_n,u_j\rangle=0, \qquad 1\le |n-j|\le K.
\end{equation}
\end{theorem}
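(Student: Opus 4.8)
\emph{Proof sketch.} The plan is to build $(u_n)_{n=1}^\infty$ inductively, one vector at a time, by an exhaustion argument in the spirit of the diagonal result \cite[Corollary 4.3]{MT}, the new feature being that at each stage we append a bounded number of extra orthogonality requirements that encode the vanishing \eqref{nondiag}. Throughout, $(e_k)_{k=1}^\infty$ will denote a fixed auxiliary orthonormal basis of $H$, used solely to force completeness of the system produced.

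First I would isolate the one-step extension. Suppose mutually orthogonal unit vectors $u_1,\dots,u_m$ have been chosen with $\langle Tu_i,u_i\rangle=\lambda_i$ and $\langle Tu_i,u_j\rangle=0$ for all $i,j\le m$ with $1\le|i-j|\le K$. Put
\[
N_m:=\{u_1,\dots,u_m\}^\perp\cap\{Tu_j,\ T^*u_j:\ m-K<j\le m\}^\perp .
\]
Choosing $u_{m+1}\in N_m$ keeps $u_1,\dots,u_{m+1}$ orthonormal and, for every $j$ with $1\le(m+1)-j\le K$, gives $\langle Tu_{m+1},u_j\rangle=\langle u_{m+1},T^*u_j\rangle=0$ and $\langle Tu_j,u_{m+1}\rangle=0$, so together with the inductive hypothesis \eqref{nondiag} holds for all pairs drawn from $u_1,\dots,u_{m+1}$. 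Since $N_m$ has finite codimension (at most $m+2K$), \eqref{inc_w} yields
\[
\lambda_{m+1}\in\Int W_e(T)\subset W\!\bigl(P_{N_m}T|_{N_m}\bigr)=\{\langle Tx,x\rangle:\ x\in N_m,\ \|x\|=1\},
\]
so there is a unit vector $u_{m+1}\in N_m$ with $\langle Tu_{m+1},u_{m+1}\rangle=\lambda_{m+1}$. This already produces an orthonormal \emph{system} satisfying \eqref{diag} and \eqref{nondiag}; only completeness is at stake, and this is where the hypothesis $\sum_n\dist\{\lambda_n,\partial W_e(T)\}=\infty$ must be spent.

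To force completeness I would not pin down $u_{m+1}$ by its diagonal value alone: among the unit vectors of $N_m$ realizing $\lambda_{m+1}$, select one whose component along $g_m$, the normalized projection of $e_{k(m)}$ onto $\{u_1,\dots,u_m\}^\perp$, is as large as the constraints allow, the targets $k(m)$ being scheduled so that each $k$ recurs infinitely often. The quantitative input --- the technical heart, obtained essentially as in \cite[Corollary 4.3]{MT}, with Theorem \ref{plank} and the geometry of the convex compact set $W_e(T)$ near $\partial W_e(T)$ as the main tools --- is that this maximal component forces $\|P_{\{u_1,\dots,u_m\}^\perp}e_{k(m)}\|^2$ to be multiplied by a factor at most $1-c\,\dist\{\lambda_{m+1},\partial W_e(T)\}$ for an absolute $c>0$; the $2K$ extra linear constraints imposed by the band merely shift codimensions by a bounded amount and do not affect this estimate. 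These leftover norms are nonincreasing in $m$ and, along the steps devoted to a given $e_k$, are multiplied by such factors, so a standard exhaustion scheduling --- possible since $\dist\{\lambda_m,\partial W_e(T)\}>0$ for every $m$ while $\sum_m\dist\{\lambda_m,\partial W_e(T)\}=\infty$ --- drives each $\|P_{\{u_1,\dots,u_m\}^\perp}e_k\|$ to $0$; hence every $e_k$ lies in the closed linear span of $(u_n)$, i.e. $(u_n)_{n=1}^\infty$ is an orthonormal basis.

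I expect the completeness step to be the main obstacle: at stage $m+1$ one must meet two \emph{exact} pointwise demands simultaneously --- the quadratic-form value $\langle Tu_{m+1},u_{m+1}\rangle=\lambda_{m+1}$ and the $2K$ vanishing relations against $Tu_j$ and $T^*u_j$ --- while still steering $u_{m+1}$ toward a prescribed direction, and one needs the gain in that direction to be quantitatively proportional to $\dist\{\lambda_{m+1},\partial W_e(T)\}$, so that the non-Blaschke hypothesis \eqref{blaschke} can be converted, via $\sum(\cdot)=\infty\Rightarrow\prod(1-c\,(\cdot))=0$, into completeness of $(u_n)$. By contrast, producing a (possibly incomplete) orthonormal system with \eqref{diag} and \eqref{nondiag} is immediate from \eqref{inc_w} and needs nothing about the $\lambda_n$ beyond $\lambda_n\in\Int W_e(T)$.
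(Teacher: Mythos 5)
Your overall architecture matches the paper's: an inductive construction in which each new vector is forced to satisfy the exact diagonal value via \eqref{inc_w}, is made orthogonal to $Tu_j,T^*u_j$ for the relevant earlier indices to secure \eqref{nondiag}, and is simultaneously steered toward a generating sequence so that the non-Blaschke condition converts, via an infinite product, into completeness. The one-step production of an orthonormal \emph{system} with \eqref{diag} and \eqref{nondiag} is indeed routine, exactly as you say.

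The gap is in the sentence asserting that the $2K$ band constraints ``merely shift codimensions by a bounded amount and do not affect this estimate.'' The obstruction is not one of codimension: you need $u_{m+1}$ to have component of size $\gtrsim\sqrt{\dist\{\la_{m+1},\partial W_e(T)\}}$ along $g_m$, the normalized leftover of $e_{k(m)}$, \emph{while lying in} $N_m$, and hence $|\langle u_{m+1},g_m\rangle|\le\|P_{N_m}g_m\|$. Nothing prevents $g_m$ from having most (or all) of its mass inside $\bigvee\{Tu_j,T^*u_j:\ m-K<j\le m\}\ominus M_m$ --- indeed $u_m$ itself was steered toward the previous leftover of $e_{k}$, so $Tu_m$ and $T^*u_m$ are exactly the kind of vectors that can eat the direction you want to use next --- and then no admissible $u_{m+1}$ yields the claimed multiplicative gain $1-c\,\dist\{\la_{m+1},\partial W_e(T)\}$ at that step. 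The paper avoids this by two devices you do not have: it schedules the completeness pushes only at indices in a single residue class $B_{r_0}$ modulo $K+1$ (chosen by pigeonhole so that $\sum_{n\in B_{r_0}}\dist\{\la_n,\partial W_e(T)\}=\infty$ still holds along that subsequence), so that no two push indices ever fall in the same band window; and at each intermediate index $j$ it chooses $u_j$ orthogonal not only to the earlier data but also to $y_{m(\hat n)},Ty_{m(\hat n)},T^*y_{m(\hat n)}$ for the \emph{upcoming} target, which guarantees that at the next push index $n$ the steering direction $b_n=(I-P_{n-1})y_{m(n)}/\|\cdot\|$ is automatically orthogonal to $Tu_j,T^*u_j$ for all $j$ in the window $\max\{1,n-K\}\le j\le n-1$. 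With that foresight the push direction is fully compatible with the band constraints and the convex-combination (reflection) argument from \cite{MT} gives the gain $\de_n/(\rho_n+\de_n)\gtrsim\dist\{\la_n,\partial W_e(T)\}/\|T\|$. Without some such mechanism your completeness estimate is unjustified, and this is precisely the point your sketch flags as the main obstacle but then dismisses.
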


Recall that if $T$ is the unilateral (forward) shift on $H^2(\mathbb D),$ then $\Int W_e(T)=\mathbb D.$ If $(\la)_{n=1}^\infty \subset\mathbb D$ satisfies \eqref{diag}, then $\sum_{n=1}^\infty (1-|\la_n|)=\infty,$ see \cite[p. 862-63]{H} and \cite[p. 3577-3578]{MT}.
Thus the non-Blaschke type assumption \eqref{diag} is optimal.

Th next corollary of Theorem \ref{band} is straightforward. 

\begin{corollary}\label{window}
Let $T\in B(H)$ be such that $0\in\Int W_e(T).$ For every  $K\in \mathbb N$ there exists an orthonormal basis $(u_n)_{n=1}^{\infty}$ in $H$ such that
\begin{equation}\label{wind}
\langle Tu_n,u_j\rangle=0
\end{equation}
for all $n,j\in\NN$ with $|n-j|\le K$.
\end{corollary}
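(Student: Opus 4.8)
The plan is to construct the basis $(u_n)_{n=1}^\infty$ by an inductive ``block'' procedure that simultaneously realizes the prescribed diagonal values $\la_n$ and the vanishing of matrix entries in the band $1\le|n-j|\le K$. The crucial device is the observation \eqref{inc_w}: for any finite-rank projection $P$ the compression $(I-P)T(I-P)$ still has $\la_n\in\Int W_e(T)\subset W((I-P)T|_{(I-P)H})$ for every $n$, so at each stage of the construction there remains ``room'' inside the numerical range of the not-yet-used part of the space. First I would recall (or reprove) the step-by-step diagonalization engine behind \cite[Corollary 4.3]{MT}: under the non-Blaschke condition $\sum_n\dist\{\la_n,\partial W_e(T)\}=\infty$ one can choose unit vectors $u_1,u_2,\dots$ one at a time, each orthogonal to the previously chosen ones, with $\langle Tu_n,u_n\rangle=\la_n$, in such a way that the construction ``closes up'' — i.e. the chosen vectors form an orthonormal basis rather than merely an orthonormal sequence. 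The divergence of the series is exactly what guarantees this closing-up (a Blaschke-type sum would leave a nontrivial orthogonal complement).

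The new point is to also kill the band entries. I would process the basis in consecutive blocks of length roughly $K{+}1$ (or, more flexibly, choose the vectors one at a time but impose, when selecting $u_n$, the finitely many linear constraints $\langle Tu_n,u_j\rangle=0=\langle Tu_j,u_n\rangle$ for those already-chosen $u_j$ with $1\le|n-j|\le K$; note only boundedly many $j<n$ are involved). Concretely, having chosen $u_1,\dots,u_{n-1}$, let $M$ be the finite-codimensional subspace of vectors orthogonal to $u_1,\dots,u_{n-1}$ and additionally to the finitely many vectors $Tu_j,\ T^*u_j$ for $n-K\le j\le n-1$. Since $M$ has finite codimension, \eqref{inc_w} gives $\la_n\in W(P_MT|_M)$, so there is a unit vector $u_n\in M$ with $\langle Tu_n,u_n\rangle=\la_n$; by construction $\langle Tu_n,u_j\rangle=\langle u_n,T^*u_j\rangle=0$ and $\langle Tu_j,u_n\rangle=\langle Tu_j,u_n\rangle=0$ for $1\le n-j\le K$, and the entries with $1\le j-n\le K$ will be handled symmetrically when those later vectors are chosen (by then including $Tu_n$ and $T^*u_n$ in the orthogonality constraints). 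Thus at the end, $\langle Tu_n,u_j\rangle=0$ for all $1\le|n-j|\le K$, and $\langle Tu_n,u_n\rangle=\la_n$.

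The main obstacle — and the reason this is not completely routine — is the closing-up step: imposing the extra $O(K)$ linear orthogonality constraints at each stage shrinks the admissible subspace $M$ and, more importantly, may force the constructed sequence to ``miss'' part of the space, so that $(u_n)$ fails to be a basis. One must therefore interleave the generation of new directions with the exhaustion of an enumerated dense set: at odd steps pick $u_n$ as above to advance the diagonal/band program, and at even steps devote a step to absorbing the next vector $f_k$ from a fixed dense sequence, again using \eqref{inc_w} to still meet the diagonal requirement while arranging (via the non-Blaschke divergence and a standard geometric/series estimate) that $\dist(f_k,\mathrm{span}\{u_1,\dots,u_n\})\to0$. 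The bookkeeping must confirm that adding the band constraints does not destroy the quantitative estimate that drives the closing-up; this is exactly where the hypothesis $\sum_n\dist\{\la_n,\partial W_e(T)\}=\infty$ is used, as in \cite{MT}, and the argument there adapts because the extra constraints only reduce codimension by a bounded amount, leaving \eqref{inc_w} applicable throughout. Finally, Corollary \ref{window} is the special case $\la_n\equiv0$, for which the hypothesis $0\in\Int W_e(T)$ makes \eqref{blaschke} trivially satisfied (the distances are a fixed positive number), so it is immediate from Theorem \ref{band}.
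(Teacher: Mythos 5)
Your proposal is correct and matches the paper's route exactly: Corollary \ref{window} is obtained by applying Theorem \ref{band} with $\la_n\equiv 0$, noting that $0\in\Int W_e(T)$ makes $\dist\{0,\partial W_e(T)\}$ a fixed positive constant so that \eqref{blaschke} holds trivially. The lengthy sketch of the inductive construction is just a (correct) recap of the proof of Theorem \ref{band} itself and is not needed for the corollary.
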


To distinguish the representations satisfying \eqref{wind}  from the representations with band structure, one may call the matrix of $T\in B(H)$ with respect to an orthonormal basis $(u_n)_{n=1}^{\infty}$ \emph{windowed} with window of width $K$ if $\langle Tu_n,u_j\rangle=0$ for all $n$ and $j$ such that $|n-j|\le K.$ In this terminology,  any $T \in B(H)$ with $0\in\Int W_e(T)$ allows a window matrix representation of any (finite) width.

If  $(\la_n)_{n=1}^\infty\subset\Int W_e(T)$ is well-separated from the boundary of $W_e(T),$ then 
by e.g. Theorem \ref{band} the sequence $(\la_n)_{n=1}^\infty$ is realizable as the main diagonal $(\langle Tu_n,u_{n}\rangle)_{n=1}^\infty.$ It appears that in this case  there are only size restrictions on the other two diagonals $(\langle Tu_n,u_{n+1}\rangle)_{n=1}^\infty$ and $(\langle Tu_{n+1},u_n\rangle)_{n=1}^{\infty}$ of $T$. The next statement supports this claim. For simplicity we formulate it for contractions (for non-contractions the result can be reformulated easily).
The result is a partial generalization of the main results in \cite{MT} and \cite{H} (as well as the results for specific classes of $T$
as in e. g. \cite{Arveson06}, \cite{Arveson_USA}, \cite{Bownik}, \cite{Bownik1}, \cite{Jasper}, and \cite{Kaftal}).
\begin{theorem}\label{three_diag}
Let $T\in B(H)$, $\|T\|\le 1,$ 
and let $\e>0$ be fixed. 
Let $(\la_n)_{n=1}^\infty\subset\Int W_e(T)$, and let $(\mu_n)_{n=1}^\infty, (\nu_n)_{n=1}^\infty\subset\CC$ satisfy 
$$\dist\{\la_n,\partial W_e(T)\}>2\e \qquad \text{and} \qquad \sup_{n \ge 1}(|\mu_n|, |\nu_n|)<\frac{\e\sqrt{\e}}{16}$$
 for all $n\in\NN$. 
Then there exists an orthonormal basis $(u_n)_{n=1}^\infty$ in $H$ such that for all $n \in \mathbb N:$
\begin{itemize}
\item [(i)]$\langle Tu_n,u_n\rangle=\la_n;$ \label{a}\\
\item [(ii)]$\langle Tu_n,u_{n+1}\rangle=\mu_n;$ \label{b}\\
\item [(iii)]$\langle Tu_{n+1},u_n\rangle=\nu_n.$ \label{c}
\end{itemize}
\end{theorem}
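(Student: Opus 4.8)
The plan is to produce the orthonormal basis $(u_n)_{n=1}^\infty$ recursively, one vector at a time, so that the whole argument rests on a single one-step lemma. Suppose $u_1,\dots,u_n$ have been constructed; let $P_n$ be the orthogonal projection onto $\mathrm{span}\{u_1,\dots,u_n\}$, put $H_n:=(I-P_n)H$, and set
\[
p_n:=(I-P_n)T^*u_n\in H_n,\qquad q_n:=(I-P_n)Tu_n\in H_n.
\]
Since $u_{n+1}\perp\mathrm{span}\{u_1,\dots,u_n\}$, continuing the pattern amounts to finding a unit vector $u_{n+1}\in H_n$ with $\langle Tu_{n+1},u_{n+1}\rangle=\la_{n+1}$, $\langle u_{n+1},p_n\rangle=\mu_n$ and $\langle u_{n+1},q_n\rangle=\overline{\nu_n}$ (the last two being exactly $\langle Tu_{n+1},u_n\rangle=\mu_n$ and $\langle Tu_n,u_{n+1}\rangle=\nu_n$), the bands $|n-j|\ge 2$ remaining unconstrained. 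Note that $\|p_n\|,\|q_n\|\le\|T\|\le1$, that the compression $T_n:=(I-P_n)T(I-P_n)|_{H_n}$ satisfies $W_e(T_n)=W_e(T)$, and that $\overline{D(\la_{n+1},2\e)}\subseteq\Int W_e(T)$.

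For the one-step lemma I would look for $u_{n+1}=v+w$ with $w$ a short correction and $v$ a ``bulk'' vector. First take $w\in\mathrm{span}\{p_n,q_n\}$ to be the minimal-norm solution of $\langle w,p_n\rangle=\mu_n$, $\langle w,q_n\rangle=\overline{\nu_n}$; writing $q_n=\beta p_n+q_n'$ with $q_n'\perp p_n$ one gets $\|w\|^2\le\|p_n\|^{-2}|\mu_n|^2+\|q_n'\|^{-2}(|\nu_n|+|\beta||\mu_n|)^2$, so provided the inductive hypothesis keeps the pair $p_n,q_n$ \emph{well conditioned} --- norms bounded below and the angle between them bounded away from $0$ --- the gap $|\mu_n|,|\nu_n|<\e^{3/2}/16$ forces $\|w\|=O(\sqrt{\e})$, in particular $\|w\|^2<2\e$ and $\|w\|<1$. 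Now choose the bulk: set $\la':=(\la_{n+1}-\langle Tw,w\rangle)/(1-\|w\|^2)$, so that $|\la'-\la_{n+1}|\le 4\|w\|^2<2\e$ and hence $\la'\in\Int W_e(T_n)$; by the quoted fact there is an infinite-rank projection $R$ on $H_n$ with $R T_n R=\la'R$, i.e.\ an infinite-dimensional subspace $L'\subseteq H_n$ on which $\langle Tx,x\rangle=\la'$ for every unit $x\in L'$. Let $\hat v$ be a unit vector in $L'\cap\{p_n,q_n,Tw,T^*w\}^{\perp}$ (a finite-codimensional, hence infinite-dimensional, subspace of $L'$) and put $v:=\sqrt{1-\|w\|^2}\,\hat v$. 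Then $v\perp w$, so $\|u_{n+1}\|=1$; $v\perp p_n,q_n$, so $\langle u_{n+1},p_n\rangle=\langle w,p_n\rangle=\mu_n$ and likewise for $q_n$; and $\hat v\perp Tw,T^*w$ kills the cross terms, so
\[
\langle Tu_{n+1},u_{n+1}\rangle=(1-\|w\|^2)\langle T\hat v,\hat v\rangle+\langle Tw,w\rangle=(1-\|w\|^2)\la'+\langle Tw,w\rangle=\la_{n+1}.
\]
Thus $u_{n+1}$ meets all three requirements exactly, and the whole recursion goes through once one can keep it running.

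The step that needs genuine care --- and the main obstacle --- is to arrange, when choosing $\hat v$ at stage $n$, that the \emph{next} pair $p_{n+1}=(I-P_{n+1})T^*u_{n+1}$, $q_{n+1}=(I-P_{n+1})Tu_{n+1}$ is again well conditioned, since otherwise the correction $w$ at the following stage cannot be made short (if $p_{n+1}=0$ while $\mu_{n+1}\neq0$ the step is outright impossible). A computation shows that, up to errors of order $\|w\|$, one has $p_{n+1}\approx (T_n-\la')^{*}\hat v$ and $q_{n+1}\approx (T_n-\la')\hat v$; so the task is to choose $\hat v\in L'$ with $\|(T_n-\la')\hat v\|$ and $\|(T_n-\la')^{*}\hat v\|$ bounded below and the angle between these two vectors bounded away from $0$. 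The first two bounds force one to pick the $\la'$-compression subspace $L'$ so that $T_n$ is far from scalar on it --- impossible only when $T_n$ is genuinely scalar-like near $\la_{n+1}$, but in that case $W_e(T)$ itself is small near $\la_{n+1}$ and $|\mu_n|,|\nu_n|$, being $O(\e^{3/2})$, are correspondingly small, which is precisely why the threshold is $\e^{3/2}/16$ rather than, say, $\e/16$. The angle condition forces $\hat v$ to be ``spread'' over at least three directions on which $T_n-\la'$ acts with non-collinear effect (a purely two-dimensional construction, e.g.\ mixing two eigenvectors of a normal $T_n$, would make $p_{n+1}$ and $q_{n+1}$ automatically parallel); this is always possible because $L'$ is infinite-dimensional and a generic choice of $\hat v$ works, but verifying the quantitative bounds in all the degenerate configurations (in particular when $\la_{n+1}$ lies near the boundary of $W_e(T)$, or near $0$) is where the real work of the proof lies. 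Once these bounds are carried through the recursion, the remaining verifications --- orthonormality, that the bands $|n-j|\ge2$ are left untouched, and completeness of $(u_n)_{n=1}^\infty$ --- are routine.
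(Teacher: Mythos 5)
Your skeleton is the same as the paper's: write $u_{n+1}$ as a small correction $w$ in the span of $(I-P_n)Tu_n$ and $(I-P_n)T^*u_n$ (solving the two off-diagonal moment conditions) plus a bulk unit vector realizing the adjusted value $\la'=(\la_{n+1}-\langle Tw,w\rangle)/(1-\|w\|^2)$ in a suitable finite-codimensional subspace. However, the argument has two genuine gaps. The first is the one you yourself flag as ``the main obstacle'': you never actually prove that the pair $p_n,q_n$ can be kept well conditioned, and your sketch of how to do it (a ``generic'' choice of $\hat v$, with worries about degenerate configurations near $\partial W_e(T)$) points in the wrong direction. The hypothesis $\dist\{\la_n,\partial W_e(T)\}>2\e$ already rules out all the degeneracies you worry about, and the paper's Lemma \ref{2d} resolves the conditioning by an explicit construction: take the bulk vector to be $\frac12(x_1+x_2+x_3+x_4)$ where $x_1,\dots,x_4$ are mutually orthogonal (together with their $T$- and $T^*$-images) and realize the four numerical values $\la'\pm\e$, $\la'\pm i\e$, all of which lie in $\Int W_e(T)$. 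Testing $w+\eta w'$ against $x_1$ and $x_2$ then gives $\|w+\eta w'\|\ge\frac{\e}{2}\max\{|1+\eta|,|1-\eta|\}\ge\frac{\e}{2}$ for every $\eta\in\CC$ (and symmetrically for $\eta w+w'$), which is exactly the quantitative lower bound needed to solve the moment problem with $\|z\|\le 2(|\al|+|\beta|)/\e$. Without this (or some substitute), your claim $\|w\|=O(\sqrt{\e})$ is unsupported and the induction cannot close.

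The second gap is that you dismiss completeness of $(u_n)_{n=1}^\infty$ as routine, but your construction contains no mechanism whatsoever forcing $\bigvee_n u_n=H$: each $u_{n+1}$ is chosen inside a subspace constrained only by orthogonality and numerical-range conditions, so the system could perfectly well stay inside a proper closed subspace. As the paper emphasizes, this is the main structural difficulty in all results of this type. The fix is to build into $u_n$ a third summand $b_n$, a small multiple (of size $\e\sqrt{\e}/32$) of the normalized vector $(I-P_{n-1})y_{m(n)}$ for a fixed total sequence $(y_m)$, chosen from an index set $A_n$ on which $y_m$ is nearly orthogonal to $\bigvee\{u_{n-1},Tu_{n-1},T^*u_{n-1}\}$; the targets $\al,\beta$ for $z_n$ must then be shifted by $\langle Tu_{n-1},b_n\rangle$ and $\langle T^*u_{n-1},b_n\rangle$ and rescaled by $\sqrt{1-\|z_n+b_n\|^2}$ so that conditions (ii) and (iii) still come out exactly. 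This extra component is also what consumes part of the margin in the threshold $\e\sqrt{\e}/16$, so it cannot be grafted on afterwards without revisiting all the norm estimates.
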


Next, instead of preassigning a part of $A_T$ we are interested in making the entries of $A_T$ vanishing fast enough 
at infinity. The method used in the proof of Theorems \ref{band} and \ref{three_diag} works here as well,
and a weaker assumption that $0\in W_e(T)$ will suffice for this purpose. 
In the following result one spreads out the estimate in \eqref{iii} 
over the whole of matrix $A_T$ of $T$ with respect to an appropriate orthonormal basis $(u_n)_{n=1}^\infty.$

\begin{theorem}\label{stoutt}
Let $T\in B(H).$ 
Then the following properties are equivalent.
\begin{itemize}
\item [(i)] $0\in W_e(T);$
\item [(ii)] 
 For any  $(a_n)_{n=1}^\infty\subset (0,\infty)$ satisfying $(a_n)_{n=1}^\infty\not \in \ell^1(\mathbb N)$ there exists an orthonormal basis $(u_n)_{n=1}^\infty$ in $H$ such that
\begin{equation}\label{bound}
|\langle Tu_n,u_j\rangle|\le \sqrt{a_n a_j}
\end{equation}
for all $n,j\in\NN$.
\end{itemize}
\end{theorem}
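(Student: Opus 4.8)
The plan is to prove the nontrivial implication (i)$\Rightarrow$(ii); the converse is immediate since \eqref{bound} for a summable-free sequence $(a_n)$ forces $\langle Tu_n,u_n\rangle \to 0$ along a subsequence, placing $0$ in $W_e(T)$ (and one should remark that $(a_n)\notin\ell^1$ guarantees $\liminf a_n = 0$, so no contradiction with boundedness of $T$ arises). For (i)$\Rightarrow$(ii) I would run the same inductive ``exhaustion'' scheme that underlies Theorems \ref{band} and \ref{three_diag}: build the basis vectors $u_1, u_2, \dots$ one at a time, at stage $n$ choosing $u_n$ in the orthogonal complement $M_n$ of $\{u_1,\dots,u_{n-1}\}$ together with a complementary finite-codimensional ``reserve'' space, so that $|\langle Tu_n, u_j\rangle|$ and $|\langle Tu_j, u_n\rangle|$ are all controlled for $j < n$, and simultaneously $\langle Tu_n, u_n\rangle$ is made as small as we like. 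The point of $0\in W_e(T)$ is precisely \eqref{we}: restricted to any finite-codimensional subspace, $T$ still has $0$ in the closure of its numerical range, so one can find unit vectors $x$ in $M_n$ with $|\langle Tx,x\rangle|$ arbitrarily small; a standard ``two-step'' trick (average $x$ with a unit vector $y\in M_n$ chosen almost orthogonal to the relevant finite set of vectors $Tu_1,\dots,Tu_{n-1}, T^*u_1,\dots,T^*u_{n-1}$, using Theorem \ref{plank} or a Gram-Schmidt estimate) then kills the off-diagonal entries in the already-constructed part of the matrix down to any prescribed tolerance while keeping the diagonal entry small.

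The quantitative heart is the bookkeeping of the tolerances. Since $(a_n)\notin\ell^1(\NN)$, I would first pass to a subsequence-style regrouping: the divergence of $\sum a_n$ lets me, for each $n$, reserve an infinite ``budget'' $\sum_{k>N} a_k = \infty$ still available after finitely many steps, so that at stage $n$ there is freedom to demand $|\langle Tu_n,u_j\rangle|^2 \le \varepsilon_{n,j}$ for a doubly-indexed array $(\varepsilon_{n,j})$ with $\sum_j \varepsilon_{n,j} \le a_n^2/C$ and symmetrically $\sum_n \varepsilon_{n,j} \le a_j^2/C$; then $|\langle Tu_n,u_j\rangle| \le \sqrt{\varepsilon_{n,j}}\cdot\text{(something)} \le \sqrt{a_na_j}$ after possibly renormalizing by the AM–GM inequality $\sqrt{\varepsilon_{n,j}\varepsilon_{j,n}}\le\tfrac12(\varepsilon_{n,j}+\varepsilon_{j,n})$. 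Concretely I would fix, before the induction, a summable double array such as $\varepsilon_{n,j} = a_n a_j / 2^{n+j}$ scaled so that $\sum_j \varepsilon_{n,j}$ and $\sum_n\varepsilon_{n,j}$ are small, and arrange at step $n$ that every already-determined entry in row/column $n$ satisfies $|\langle Tu_n,u_j\rangle|\le \sqrt{\varepsilon_{n,j}}$ and the diagonal entry satisfies $|\langle Tu_n,u_n\rangle|\le\sqrt{\varepsilon_{n,n}}$; summing the squared row gives $\sum_j|\langle Tu_n,u_j\rangle|^2 \le \sum_j \varepsilon_{n,j}<\infty$, which is consistent with $u_n$ being a unit vector and $T$ bounded, so no obstruction. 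That the off-diagonal part can actually be driven below $\sqrt{\varepsilon_{n,j}}$ is exactly the content of the ``small perturbation in a finite-codimensional subspace'' lemma used for Theorem \ref{stout}(ii)–(iii), applied here with the stronger, $j$-dependent tolerances.

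The remaining issue is that one must check the constructed $(u_n)$ is an orthonormal \emph{basis}, not merely an orthonormal sequence. This is handled in the usual way by interleaving: alongside the ``value-assignment'' steps one processes a fixed dense sequence $(w_k)$ in $H$, at appropriate stages augmenting the span of the $u_n$'s constructed so far by Gram–Schmidt against the next $w_k$, so that $\overline{\mathrm{span}}\{u_n\} = H$. The flexibility to do this coexists with the tolerance budget because adding finitely many ``forced'' vectors only shrinks the reserve space by finite codimension, which \eqref{we} tolerates. I expect the main obstacle to be purely organizational: setting up the double-indexed tolerance array and the order in which rows, columns, and basis-completion steps are visited so that (a) every entry $\langle Tu_n,u_j\rangle$ gets finalized at a well-defined stage with its prescribed bound, (b) the partial sums $\sum_j\varepsilon_{n,j}$ and $\sum_n\varepsilon_{n,j}$ stay within the budget the non-$\ell^1$ hypothesis supplies, and (c) completeness is achieved. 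There is no genuinely new analytic input beyond what already drives Theorems \ref{band}, \ref{three_diag}, and \ref{stout}; the theorem is a repackaging of that machinery with the one-parameter estimate \eqref{iii} replaced by the factorized two-parameter estimate \eqref{bound}.
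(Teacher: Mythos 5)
Your overall scheme (inductive construction, orthogonality against $Tu_j,T^*u_j$, mixing in a component of a spanning sequence to force completeness) is the right skeleton, and it is the one the paper uses. But there is a genuine gap at the exact point you dismiss as ``purely organizational'', namely the reconciliation of the completeness requirement with the size bounds. Write $u_n=\sqrt{1-c_n^2}\,v_n+c_nw_n$, where $v_n$ is the ``free'' vector chosen orthogonal to everything relevant and $c_nw_n$ is the component along $(I-P_{n-1})y_{m(n)}$ needed for density. For $\bigvee_n u_n=H$ you need $\sum_{n\in A_m}c_n^2=\infty$ for each index class; on the other hand $\langle Tu_j,u_n\rangle$ contains the term $c_n\langle Tu_j,w_n\rangle$, which in general you can only bound by $c_n\|T\|$. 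Your proposed tolerance array $\varepsilon_{n,j}=a_na_j/2^{n+j}$ would therefore force $c_n\lesssim\sqrt{a_na_{n-1}}\,2^{-(2n-1)/2}$, hence $\sum_nc_n^2<\infty$ (already for $a_n\equiv 1$), and the basis argument collapses. The same objection kills the alternative of inserting occasional pure Gram--Schmidt steps against a dense sequence $(w_k)$: at such a step you have no control at all on $\langle Tu_n,u_j\rangle$ beyond $\|T\|$, which violates \eqref{bound} whenever $a_n\to 0$. The row-summability check $\sum_j|\langle Tu_n,u_j\rangle|^2<\infty$ that you offer as evidence of ``no obstruction'' is automatic for any bounded $T$ and does not address this tension.

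The missing idea is the paper's Lemma \ref{lone}: one replaces $(a_n)$ by an auxiliary $(a'_n)$ with $a'_n\le\min\{1,a_n\}$, $\sum_na'_n=\infty$ and $a'_n/a_n\to 0$, and takes $c_n=\sqrt{a'_n}/2$. Then $\sum c_n^2=\infty$ survives, while for $n$ beyond a threshold $d(j)$ one has $a'_n<a_na_j$, so the crude bound $|\langle Tu_j,u_n\rangle|\le c_n$ already gives $\sqrt{a_na_j}$; the finitely many exceptional pairs $n<d(j)$ are handled by a separate orthogonality argument (choosing $v_n$ orthogonal also to $y_1,\dots,y_{d(n)}$ and their images under $T,T^*$) that reduces $|\langle Tv_j,w_n\rangle|$ to $|\langle Tv_j,v_j\rangle|\le a'_j/2$. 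This calibration between the divergence of $\sum a'_n$ and the decay of $a'_n/a_n$ is the analytic heart of the theorem, not bookkeeping. A small additional slip: $(a_n)\notin\ell^1$ does \emph{not} imply $\liminf_na_n=0$ (take $a_n\equiv 1$); the implication (ii)$\Rightarrow$(i) should instead be obtained by applying (ii) to one particular non-summable null sequence such as $a_n=1/n$.
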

Note that for the diagonal elements $\langle Tu_n,u_n\rangle$ the estimate \eqref{bound} yields  
$|\langle Tu_n,u_n\rangle|\le a_n$ for all $n \in \mathbb N$, which is precisely Stout's condition \eqref{iii}.
Thus, \eqref{bound} can be considered as an extension of 
\eqref{iii}.
At the same time, the property (ii) in Theorem \ref{stoutt}  is clearly much stronger  than the ``small entry property'' in  Theorem \ref{stout}, (ii).
 Thus Theorem \ref{stoutt} is a vast generalization of Stout's Theorem \ref{stout}.
See also an open question on \cite[p. 45]{Stout} asking for a version of Theorem \ref{stoutt}.
Generalizations of \eqref{iii} in the framework of diagonals for operator tuples, and also to the context of operator-valued diagonals, can be found in \cite[Theorems 1.2 and 1.3]{MT}, see also \cite{MT_surv}.

By considering $T-\lambda$ for any $\lambda \in W_e(T)$ and applying Theorem \ref{stoutt} to $T-\lambda$
one gets the following corollary concerning arbitrary $T \in B(H).$
\begin{corollary}
Let $T\in B(H).$ Then for any $(a_n)_{n=1}^\infty\subset (0,\infty)$ satisfying $(a_n)_{n=1}^\infty\not \in \ell^1(\mathbb N)$ 
there exists an orthonormal basis $(u_n)_{n=1}^\infty$ in $H$ such that
$$
|\langle Tu_n,u_j\rangle|\le\sqrt{a_n a_j}
$$
for all $n,j\in\NN,$ $n\ne j$.
\end{corollary}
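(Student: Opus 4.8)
The plan is to reduce the statement to Theorem \ref{stoutt} by a translation trick, exactly along the lines indicated in the sentence preceding the corollary. First I would use the fact recalled in Section \ref{results} that $W_e(T)$ is non-empty for every $T\in B(H)$, and fix an arbitrary point $\lambda\in W_e(T)$. The essential numerical range is translation covariant, $W_e(T-\lambda)=W_e(T)-\lambda$; this is immediate from the characterization \eqref{we}, since if $\langle Te_n,e_n\rangle\to\lambda$ along an orthonormal sequence $(e_n)_{n=1}^\infty$, then $\langle (T-\lambda)e_n,e_n\rangle\to 0$ along the same sequence. In particular $0\in W_e(T-\lambda)$.

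Next I would apply Theorem \ref{stoutt} to the operator $T-\lambda$. This is legitimate: the hypothesis $0\in W_e(T-\lambda)$ has just been verified, and the sequence $(a_n)_{n=1}^\infty$ is by assumption an element of $(0,\infty)^{\mathbb N}\setminus\ell^1(\mathbb N)$. The theorem then furnishes an orthonormal basis $(u_n)_{n=1}^\infty$ of $H$ such that $|\langle (T-\lambda)u_n,u_j\rangle|\le\sqrt{a_n a_j}$ for all $n,j\in\mathbb N$.

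Finally I would pass to the off-diagonal entries. For $n\ne j$ orthonormality gives $\langle u_n,u_j\rangle=0$, hence $\langle (T-\lambda)u_n,u_j\rangle=\langle Tu_n,u_j\rangle-\lambda\langle u_n,u_j\rangle=\langle Tu_n,u_j\rangle$, and the inequality $|\langle Tu_n,u_j\rangle|\le\sqrt{a_na_j}$ for all $n\ne j$ follows at once. The deduction itself presents no genuine difficulty — all the work is hidden in Theorem \ref{stoutt} — so the only point worth flagging is why the conclusion must exclude the case $n=j$: on the diagonal one only obtains $|\langle Tu_n,u_n\rangle-\lambda|\le a_n$, and for a general operator (e.g.\ $T=I$) no basis can make the diagonal entries themselves small, so the translation by $\lambda$ genuinely cannot be undone there. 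Thus the restriction $n\ne j$ is not an artefact of the proof but intrinsic to the statement.
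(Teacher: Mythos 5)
Your proof is correct and is precisely the argument the paper intends: pick $\lambda\in W_e(T)$ (non-empty for every $T$), apply Theorem \ref{stoutt} to $T-\lambda$, and observe that the translation disappears from the off-diagonal entries by orthonormality. Your remark on why the diagonal must be excluded is a sensible addition but not needed for the deduction.
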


Choosing $(a_n)_{n=1}^{\infty} \in \ell^2(\mathbb N)\setminus \ell^1(\mathbb N)$ in Theorem \ref{stoutt}, we get the next statement 
complementing Corollary \ref{window} in the case when
e.g. $\Int W_e(T)=\emptyset.$ 
\begin{corollary}\label{window1}
Let $T\in B(H)$ and $\lambda  \in W_e(T).$   
Then for every  $K \in \mathbb N$ there exists an orthonormal basis $(u_n)_{n=1}^\infty$ in $H$  such that
\[
T=\lambda + W + S,
\]
where $W\in B(H)$ satisfies $\langle W u_n, u_j\rangle=0, |n-j| \le K,$  
and $S$ is a Hilbert-Schmidt operator on $H$.
\end{corollary}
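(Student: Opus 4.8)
\emph{Proof proposal.} The plan is to obtain this as a direct consequence of Theorem \ref{stoutt} applied to $R := T - \lambda$. Since the essential numerical range is translation-equivariant, $W_e(R) = W_e(T) - \lambda$, so the hypothesis $\lambda \in W_e(T)$ says precisely that $0 \in W_e(R)$, and part (ii) of Theorem \ref{stoutt} applies to $R$ with any prescribed non-summable positive weight. The only real choice is the weight itself: take $a_n = 1/n$, so that $(a_n)_{n=1}^\infty \in \ell^2(\mathbb N)\setminus\ell^1(\mathbb N)$. Theorem \ref{stoutt} then furnishes an orthonormal basis $(u_n)_{n=1}^\infty$ of $H$ with $|\langle R u_n, u_j\rangle| \le \sqrt{a_n a_j}$ for all $n, j \in \mathbb N$. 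Fixing this basis, split the matrix of $R$ into its central band and the remainder: let $S$ be the operator whose matrix entries with respect to $(u_n)_{n=1}^\infty$ are $\langle S u_n, u_j\rangle = \langle R u_n, u_j\rangle$ for $|n-j| \le K$ and $0$ otherwise, and put $W := R - S = T - \lambda - S$. By construction the matrix of $W$ in this basis vanishes whenever $|n-j| \le K$, so the whole matter reduces to showing that $S$ is Hilbert--Schmidt; once this is known, $S \in B(H)$ is automatic and hence so is $W$.

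The Hilbert--Schmidt bound is the only computation, and it is immediate. Writing the band as the union of the $2K+1$ shifted diagonals indexed by $k=-K,\dots,K$ and applying Cauchy--Schwarz on each of them, with the convention $a_m = 0$ for $m \le 0$, one gets
\[
\|S\|_{HS}^2 \;=\; \sum_{\substack{n,j\ge 1\\ |n-j|\le K}} |\langle R u_n, u_j\rangle|^2 \;\le\; \sum_{k=-K}^{K}\ \sum_{n\ge 1} a_n\,a_{n+k} \;\le\; (2K+1)\sum_{n\ge 1} a_n^2 \;<\;\infty,
\]
which completes the argument.

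I do not expect a genuine obstacle here: the corollary is in essence a repackaging of Theorem \ref{stoutt}, the only new ingredient being the observation that an $\ell^2$-weight forces the excised central band to be Hilbert--Schmidt. The one point worth flagging is that it is the square-summability of the retained matrix entries --- not their mere boundedness --- that makes $S$ a well-defined element of $B(H)$ requiring no separate norm estimate; this is exactly why one takes $(a_n) \in \ell^2$, and it is also the reason the corollary genuinely complements Corollary \ref{window} in the degenerate regime $\Int W_e(T) = \emptyset$, where one cannot hope for a window representation of $T$ itself.
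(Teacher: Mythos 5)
Your argument is correct and is exactly the route the paper intends: the authors derive Corollary \ref{window1} by choosing $(a_n)_{n=1}^\infty\in\ell^2(\mathbb N)\setminus\ell^1(\mathbb N)$ in Theorem \ref{stoutt} applied to $T-\lambda$, and your band/off-band splitting with the Cauchy--Schwarz estimate on the $2K+1$ shifted diagonals is precisely the (omitted) verification that the excised band is Hilbert--Schmidt.
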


Next we consider a problem opposite in a sense to the one addressed in Theorem \ref{stoutt}. For fixed operator $T \in B(H)$ we would like to find a basis $(u_n)_{n=1}^\infty$ giving rise to a matrix $A_T$ of $T$ with large entries $\langle Tu_n,u_j\rangle$. 
Since the task of  getting the lower bounds for $\langle Tu_n,u_j\rangle$ seems to be more demanding than the one concerning the 
upper bounds,
 we restrict ourselves to the polynomial scale of bounds. However, to illustrate the sharpness of our lower estimates
we provide the upper estimates as well. There is a certain gap between the two kinds of estimates, and we do not know whether
the gap can be removed, at least in a sense.
In terms of the size of constructed $A_T,$ the result given below is of course weaker than  Theorem \ref{lust}. However,
while in Theorem \ref{lust} one looks just for any operator satisfying \eqref{plank1}, our theorem produces an operator having large matrix entries and belonging to the unitary orbit of a fixed $T\in B(H)$ if $W_e(T)$ is large enough.
\begin{theorem}\label{polynom}
Let $T\in B(H)$ be an operator which is not of the form $T=\la + K$ for some $\la\in\CC$ and a compact operator $K\in B(H)$. Then there exist an orthonormal basis $(u_n)_{n=1}^\infty\subset H$ and strictly positive constants $c_1,c_2,$ and $d$ depending only on the diameter of $W_e(T)$ such that
\begin{equation}\label{11}
|\langle Tu_n,u_n\rangle|\ge d, \qquad n\in\NN,
\end{equation}
and
\begin{equation}\label{12}
\frac{c_1\min\{n,j\}^{1/2}}{\max\{n,j\}^{3/2}}\le
|\langle Tu_n,u_j\rangle|\le \frac{c_2}{\max\{n,j\}^{1/2}}
\end{equation}
for all $n,j\in\NN, n\ne j$.
\end{theorem}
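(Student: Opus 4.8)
The plan is to establish \eqref{11}--\eqref{12} by an inductive construction of the basis $(u_n)_{n=1}^\infty$ in the spirit of the proofs of Theorems~\ref{band}, \ref{three_diag} and \ref{stoutt}, the genuinely new feature being a two-sided control of \emph{all} off-diagonal entries. Set $\delta:=\diam W_e(T)$. Since the image of $T$ in the Calkin algebra is a scalar exactly when $T=\la+K$ with $K$ compact, and this happens exactly when $W_e(T)$ is a singleton, the hypothesis of the theorem is precisely $\delta>0$. By an elementary argument about the convex compact set $W_e(T)$ (choose a diameter $[A,B]$; if $0\in W_e(T)$ one of $A,B$ has modulus $\ge\delta/2$, and if $0\notin W_e(T)$ one exploits convexity of $t\mapsto|(1-t)A+tB|$), followed by a slight push into the relative interior, one fixes two points $\mu,\nu$ in the relative interior of $W_e(T)$ with
$$|\mu-\nu|\ge \tfrac{\delta}{16}\qquad\text{and}\qquad d:=\dist\{0,\conv\{\mu,\nu\}\}\ge \tfrac{\delta}{16}.$$
These two numbers are the only ``seeds'' of the construction, which is why the resulting $c_1,c_2,d$ will depend on $T$ only through $\delta$. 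By the relative-interior form of the fact recorded after \eqref{inc_w}, for every finite-codimensional subspace $M$ there is an infinite-dimensional subspace $L\subseteq M$ with $\langle Tx,x\rangle\equiv\mu$ on $L$, and likewise for $\nu$; in particular $\mu$ and $\nu$ belong to the numerical range of every finite-codimensional compression of $T$.

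The induction runs as follows. Having chosen $u_1,\dots,u_{n-1}$, put $M_{n-1}:=\{u_1,\dots,u_{n-1}\}^\perp$ and choose a unit vector $u_n\in M_{n-1}$ such that: (i) $\langle Tu_n,u_n\rangle\in\{\mu,\nu\}$, so that $|\langle Tu_n,u_n\rangle|\ge d$ and \eqref{11} holds; (ii) for every $j<n$ the two new entries
$$\langle Tu_n,u_j\rangle=\langle u_n,T^*u_j\rangle\qquad\text{and}\qquad\langle Tu_j,u_n\rangle=\overline{\langle u_n,Tu_j\rangle}$$
have modulus in the window $\bigl[c_1 j^{1/2}n^{-3/2},\,c_2 n^{-1/2}\bigr]$; (iii) a fixed orthonormal basis of $H$ is exhausted in the limit, so that $(u_n)_{n=1}^\infty$ is a basis. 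Items (i) and (iii) come with the scaffolding already developed for Theorems~\ref{band}, \ref{three_diag} and \ref{stoutt}: (i) because $\mu,\nu$ lie in the numerical range of every finite-codimensional compression of $T$, and (iii) by the exhaustion device used there. All the content lies in (ii).

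For (ii) note that, at the moment $u_n$ is chosen, it only has to prescribe its inner products with the $\le 2(n-1)$ \emph{fixed} vectors $P_{M_{n-1}}T^*u_j$ and $P_{M_{n-1}}Tu_j$, $j<n$. For the \emph{lower} ends of the windows: the sum of the squared lower targets over $j<n$ is $\asymp c_1^2/n<1$, so after normalising these $2(n-1)$ vectors the existence of a unit vector realising the lower bounds is exactly the plank-type Theorem~\ref{plank} (this is the only place where infinite-dimensionality of $H$ is used, and it forces $c_1$ to be a small fixed multiple of $|\mu-\nu|$). For the \emph{upper} ends: since $|\langle u_n,T^*u_j\rangle|\le\|P_{M_{n-1}}T^*u_j\|$ and likewise with $Tu_j$, the bound $c_2 n^{-1/2}$ is automatic once the residual norms $\|P_{M_{n-1}}T^*u_j\|$, $\|P_{M_{n-1}}Tu_j\|$ have dropped below $c_2 n^{-1/2}$; one therefore carries along, as part of the induction hypothesis, a prescribed decay schedule for these residuals, and for those $j$ at which a residual is still large one instead chooses $u_n$ nearly orthogonal to the offending vector — which can be done simultaneously with (i) and the lower bounds by working inside the subspace $L$ of the first paragraph (intersected with $M_{n-1}$), where the quadratic form of $T$ is constant. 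The exponents in \eqref{12} are dictated by this bookkeeping: the lower targets must be square-summable along each row and each column of $A_T$ (this forces the power $3/2$), whereas the upper bound must fail to be square-summable so as to absorb, in the tail, the possibly large norms $\|Tu_j\|$, $\|T^*u_j\|$ (this forces the power $1/2$); crucially $\|T\|$ enters only through the residual decay schedule and never through the windows themselves, which is what keeps $c_1,c_2,d$ dependent on $\delta$ alone. Granting (i)--(iii), \eqref{11} and the left inequality of \eqref{12} follow from (i)--(ii), and the right inequality of \eqref{12} from the residual control.

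The step I expect to be the main obstacle is (ii). Theorem~\ref{plank} yields only the \emph{lower} halves of the windows, so what is really needed is a simultaneous two-sided control of the new off-diagonal entries that is also compatible with the rigid diagonal requirement $\langle Tu_n,u_n\rangle\in\{\mu,\nu\}$ and with keeping the residual norms on their prescribed schedule; reconciling these competing demands — in particular, choosing, at each stage, the constant-value subspace on which the plank argument is run so that none of the relevant residual vectors is made too short while the others are kept short — is the delicate part. It is exactly the friction between the lower-bound mechanism and the upper-bound/decay mechanism that produces the polynomial gap between $\min\{n,j\}^{1/2}/\max\{n,j\}^{3/2}$ and $\max\{n,j\}^{-1/2}$ in \eqref{12}, and — as remarked before the statement — whether a method avoiding such a gap exists is left open.
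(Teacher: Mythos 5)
There is a genuine gap, and it sits exactly where you say the ``main obstacle'' is: you never supply a mechanism that delivers the upper bound in \eqref{12} simultaneously with the plank-type lower bound, and the mechanism you sketch (drive the residuals $\|P_{M_{n-1}}Tu_j\|$, $\|P_{M_{n-1}}T^*u_j\|$ below $c_2n^{-1/2}$ on a ``decay schedule'', and for the offending $j$ with large residual choose $u_n$ ``nearly orthogonal'' to them) is not only unresolved but points in the wrong direction. The paper's proof needs those residuals to stay \emph{large}: the inductive invariant is $\|(I-P_k)Tu_j\|^2\ge C^2j/(2k)$ (and the same for $T^*$), which for $j$ comparable to $k$ is far above $c_2k^{-1/2}$, so no decay schedule of the kind you describe can coexist with the lower-bound bookkeeping. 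The idea you are missing is the specific shape of the new vector: $u_n=\tfrac{1}{\sqrt{an}}\,z_n+\sqrt{1-\tfrac{1}{an}}\,v_n$, where $z_n$ is the plank vector (Theorem~\ref{plank} applied to the $2n-1$ projected vectors $(I-P_{n-1})Tu_j$, $(I-P_{n-1})T^*u_j$, $(I-P_{n-1})y_{m(n)}$) and $v_n$ is a \emph{fresh} unit vector orthogonal to all $Tu_j,T^*u_j$. Then $\langle Tu_j,u_n\rangle=\tfrac{1}{\sqrt{an}}\langle Tu_j,z_n\rangle$, so the upper bound $c_2n^{-1/2}$ is just the coefficient $\tfrac{1}{\sqrt{an}}$ (after normalizing $\|T\|=1$) and requires no two-sided control at all; the lower bound comes from the plank weights $\tfrac{1}{2\sqrt n}$ together with the invariant above; and the invariant survives because each step removes only the fraction $\tfrac{1}{an}\le\tfrac1n$ of each squared residual, matching the decay of $j/k$ in $k$. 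This decoupling is the content of the proof, and without it your step (ii) does not go through.

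A second, related gap: you insist on $\langle Tu_n,u_n\rangle\in\{\mu,\nu\}$ exactly, chosen on a subspace where the quadratic form of $T$ is constant. That gives \eqref{11}, but it does not give any lower bound on $\|Tu_n-\langle Tu_n,u_n\rangle u_n\|$ or $\|T^*u_n-\langle T^*u_n,u_n\rangle u_n\|$, and such a bound is indispensable to \emph{initialize} the residual invariant at step $j=n$ (otherwise the lower half of \eqref{12} fails for the column/row indexed by $n$ at all later stages). The paper's Lemma~\ref{pearcy} produces, in any finite-codimensional subspace, a unit vector $u$ with all three bounds $|\langle Tu,u\rangle|\ge D$, $\|Tu-\langle Tu,u\rangle u\|\ge C$, $\|T^*u-\langle T^*u,u\rangle u\|\ge C$ at once (by averaging two almost-eigenvectors of the quadratic form at the endpoints of a diameter of $W_e(T)$), and it is content with $|\langle Tu_n,u_n\rangle|\ge D-2/a$ rather than an exact value. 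Your reduction to $\diam W_e(T)>0$, the use of Theorem~\ref{plank} for the lower bounds, and the heuristic for the exponents $3/2$ and $1/2$ are all consistent with the paper, but the two points above are the substance of the proof and are not present in the proposal.
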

It is probable that Theorem \ref{polynom} will be of value for the study of Schur multipliers, similarly as
Theorem \ref{lust} was used in \cite{Davidson}. However, we have not explored this direction.
Accidentally, the assumption of Theorem \ref{polynom} is equivalent to the fact that $T$ is a commutator (or that
$T$ is similar to an operator having an infinite-dimensional zero compression), see e.g. \cite[p. 440]{Anderson} and also \cite{Brown}.
Some methods and observations from the early days of commutator theory are similar in spirit to our proof of Theorem \ref{polynom}. 
However,  we are not aware of any deeper relations between our results and the commutator properties. 
On the other hand, estimates of the matrix elements were, in particular, important in the study of commutators in 
e.g. \cite[Section 3]{Brown},
 \cite{Mendel} and \cite{Mendel1}.

All of the results in this section stated above are nontrivial even for $T$ being a unilateral (forward) shift on $H^2(\mathbb D),$
and we do not see any simpler way to obtain them even in this very particular situation.
\subsection{Strategy}

It would be instructive and helpful to underline a general idea behind our arguments,
since it may possibly be modified and used in similar contexts as well.
While fine details of the proofs of above theorems differ from each other and require 
the corresponding adjustments, the general approach can be roughly described as follows. 

Let $T \in B(H)$ and assume that we are looking for an orthonormal basis $(u_n)_{n=1}^\infty$ such that $A_T=(\langle Tu_n, u_j \rangle)_{j,n=1}^\infty$ has certain property $(P)$. If $0 \in W_e(T)$, then for any subspace $M\subset H$ of finite codimension there exists a unit vector $e \in M$ such that $|\langle Te,e\rangle|<\epsilon.$ If moreover $0 \in \Int W_e(T)$, then  $\langle Te_\lambda,e_\lambda\rangle=\lambda$ for some unit vector $e_\lambda \in M$ 
and all $\lambda$ from 
a sufficiently small neighbourhood of zero. Using this one may construct inductively
the orthonormal sequence $(u_n)_{n=1}^{\infty}$ such that the sub-matrix $(\langle Tu_n, u_j \rangle)_{j,n=1}^\infty$ of $T$ has $(P).$
The construction is comparatively direct. 
Let $n\ge 1$ and suppose that we have constructed the vectors $u_1,\dots, u_{n-1}$ such that $A_{T, n-1}=(\langle Tu_i, u_j \rangle)_{j,i=1}^{n-1}$ satisfies $(P)$ (or its    
``approximation'').
The subspaces $M_{n-1}=\bigvee_{j=1}^{n-1}u_j$,  $T(M_{n-1})$ and $T^*(M_{n-1})$ span a subspace $\mathcal M_{n-1}$
of finite dimension in $H$. Choosing a unit vector $v_n \in \mathcal M_{n-1}^\perp$ according to the assumptions on $W_e(T)$ as above,
and employing simple finite-dimensional arguments, we may construct the required element $u_n$ as an appropriate linear combination of $u_j, Tu_j, T^*u_j, j=1, \dots, n-1,$
and $v_n.$

However, the main problem is to ensure that this sequence $(u_n)_{n=1}^\infty$ is a basis. This is achieved in the following way. First fix a countable sequence $(y_m)_{m=1}^{\infty}$ of unit vectors which generates all of $H$. Write $\NN=\bigcup_{m=1}^\infty A_m$ as a disjoint union of properly chosen sets $A_m$,
and for $m \in \mathbb N$ denote $m(n)$ the unique integer such that $n \in A_m.$ 
Arguing inductively as above, let $n\ge 1$, suppose that  $u_1,\dots, u_{n-1}$ with required $A_{T, n-1}$ have already been constructed.  Choose a unit vector $v_n$ orthogonal to $\mathcal M_{n-1}$ and define $u_n$ as a small perturbation of $v_n:$
$$
u_n:=\sqrt{1-c_n} v_n +c_n (I-P_{n-1})y_{m(n)},
$$ 
where $P_{n-1}$ is the orthogonal projection onto the subspace $M_{n-1}$ and $c_n\in (0,1)$ is ``small enough''. 

If we do it properly, then $A_{T, n}$ would still satisfy $(P)$, $u_n\perp \mathcal M_{n-1}$
and, recalling that $\|(I-P_{k})y_{m(n)}\|=\dist\{y_{m(n)},M_{k}\}$ for all $k,$
$$
\dist^2\{y_{m(n)}, M_n\}\le (1-c_n^2)\dist^2\{y_{m(n)},M_{n-1}\}.
$$

Moreover, since $M_n \subset M_{n+1}, n \ge 1,$ we clearly have $\dist\{y_{m},M_{k}\}\le \dist\{y_{m},M_{n}\}$ for $k \ge n.$
So if the numbers $c_n$ are chosen in such a way that $\sum_{n \ge 1} c_n^2=\infty,$ then 
$$
\lim_{n\to\infty}\ln \dist\{y_{m}, M_n\}\le -\lim_{n\to\infty} \sum_{j\le n\atop m(j)=m} c_j^2=-\infty
$$ 
for all $m\in\NN$. Hence $y_m\in\bigvee_{n=1}^\infty u_n$ for all $m$, and thus  $(u_n)_{n=1}^\infty$ will be the required orthonormal basis such that $A_T$ has $(P)$.

To simplify the exposition of the inductive arguments given below, we assume that a sum consisting of no terms (the ``empty sum'') equals zero,  the linear span of the empty set is $\{0\},$ and adopt other usual conventions on empty sets.

\section{Matrices with several given diagonals: proofs}
 
We start with a proof of Theorem \ref{band} being a generalization of \cite[Corollary 4.2]{MT}
in case of a single operator. (Concerning the setting of tuples see Section \ref{final}.)
The argument employed there is a good illustration of the strategy described above,
and  its variants will be used several times in the sequel.

\bigskip

{\it Proof of Theorem \ref{band}}\,\,
Let $(y_m)_{m=1}^\infty$ be a sequence of unit vectors in $H$ such that $\bigvee_{m=1}^\infty y_m=H$.

For $r=0,1,\dots,K$ let $B_r=\{n\in\NN: n=r \mod\, (K+1)\},$ and note that there exists $r_0\in\{0,\dots,K\}$ such that $\sum_{n\in B_{r_0}}\dist\{\la_n,\partial W_e(T)\}=\infty$.

Represent $B_{r_0}$ as $B_{r_0}=\bigcup_{m=1}^\infty A_m,$ where $A_m \cap A_n =\emptyset, m \neq n,$ and 
$$
\sum_{n\in A_m}\dist\{\la_n,\partial W_e(T)\}=\infty
$$
for all $m\in\NN$.

For $n\in B_{r_0}$ let $m(n)$ be the unique integer satisfying $n\in A_{m(n)}$.

We construct the vectors $u_n, n \ge 1,$ inductively. Let $n\ge 1$ and suppose that the vectors $u_1,\dots,u_{n-1}$ have already been constructed.

Suppose first that $n\notin B_{r_0}$. Let $\hat n=\min\{n'\in B_{r_0}: n'>n\}$.
Find a unit vector 
$$
u_n\in\{u_j,Tu_j,T^*u_j\quad j=1,\dots,n-1, y_{m(\hat n)}, Ty_{m(\hat n)}, T^*y_{m(\hat n)}\}^\perp
$$
such that $\langle Tu_n,u_n\rangle=\la_n$.
Clearly $u_n\perp u_1,\dots,u_{n-1}$,
$$
\langle Tu_j,u_n\rangle=0, \qquad j=\max\{1,n-K\},\dots,n-1,
$$
and
$$
\langle Tu_n,u_j\rangle=
\langle u_n,T^*u_j\rangle=0, \qquad j=\max\{1,n-K\},\dots,n-1.
$$
So $u_n$ satisfies \eqref{diag} and \eqref{nondiag}.

Suppose now that $n\in B_{r_0}.$ Let $P_{n-1}$ be the orthogonal projection onto the subspace $M_{n-1}:=\bigvee_{j=1}^{n-1} u_j$.

If $y_{m(n)}\in M_{n-1}$ then find a unit vector
$$
u_n\in\{u_j,Tu_j,T^*u_j: j=1,\dots,n-1\}^\perp
$$
such that $\langle Tu_n,u_n\rangle=\la_n$. Then $u_n$ satisfies \eqref{diag} and \eqref{nondiag} again.

Suppose that $y_{m(n)}\notin M_{n-1}$.
Set
$$
b_n=\frac{(I-P_{n-1})y_{m(n)}}{\|(I-P_{n-1})y_{m(n)}\|}.
$$
Let
$$\rho_n:=|\langle Tb_n,b_n\rangle-\la_n| \qquad \text{and} \qquad  \de_n:=\frac{1}{2}\dist\{\la_n,\partial W_e(T)\}.$$
If $\langle Tb_n,b_n\rangle=\la_n,$ then set $\mu_n=\la_n$. If $\langle Tb_n,b_n\rangle\ne\la_n,$ then let
$\mu_n\in\CC$ be the unique number satisfying
$$
\frac{\langle Tb_n,b_n\rangle-\la_n}{\rho_n}=\frac{\la_n-\mu_n}{\de_n}.
$$

Clearly $\mu_n\in \Int W_e(T)$. Using \eqref{inc_w},
find a unit vector
$$
v_n\in\{u_j,Tu_j,T^*u_j, j=1,\dots,n-1; b_n,Tb_n,T^*b_n\}^\perp
$$
satisfying $\langle Tv_n,v_n\rangle=\mu_n$. Set
$$
u_n=\sqrt{\frac{\rho_n}{\rho_n+\de_n}}\,v_n+\sqrt{\frac{\de_n}{\rho_n+\de_n}}\,b_n.
$$
Since $v_n\perp b_n$, we have $\|u_n\|=1$.
We have $u_n\perp \bigvee\{u_1,\dots,u_{n-1}\}$ and
\begin{align*}
\langle Tu_n,u_n\rangle=&
\frac{\rho_n}{\rho_n+\de_n}\langle Tv_n,v_n\rangle+\frac{\de_n}{\rho_n+\de_n}\langle Tb_n,b_n\rangle\\
=&
\frac{\rho_n\mu_n+\de_n\langle Tb_n,b_n\rangle}{\rho_n+\de_n}\\
=&
\frac{\rho_n}{\rho_n+\de_n}(\mu_n-\la_n)+\frac{\de_n}{\rho_n+\de_n}(\langle Tb_n,b_n\rangle-\la_n) +\la_n\\
=&\la_n.
\end{align*}
Suppose that $j$ satisfies $\max\{1,n-K\}\le j\le n-1$. Then $j,j+1,\dots,n-1\notin B_{r_0}$. By construction,
$\bigvee \{u_j,\dots,u_{n-1}\} \perp y_{m(n)}$. So
$$
b_n=\frac{(I-P_{n-1})y_{m(n)}}{\|(I-P_{n-1})y_{m(n)}\|}=
\frac{(I-P_{j-1})y_{m(n)}}{\|(I-P_{j-1})y_{m(n)}\|},
$$
hence $b_n$ is a linear combination of $u_1,\dots,u_{j-1},y_{m(n)}$, and so $Tu_j\perp b_n$.

Thus we have
$$
\langle Tu_j, u_n\rangle=
\sqrt{\frac{\rho_n}{\rho_n+\de_n}}\langle Tu_j,v_n\rangle+
\sqrt{\frac{\de_n}{\rho_n+\de_n}}\langle Tu_j,b_n\rangle=0.
$$
Similarly one proves that
$$
\langle Tu_n,u_j\rangle=
\langle u_n,T^*u_j\rangle=
\sqrt{\frac{\rho_n}{\rho_n+\de_n}}\langle v_n,T^*u_j\rangle +
\sqrt{\frac{\de_n}{\rho_n+\de_n}}\langle b_n,T^*u_j\rangle=0.
$$

If we continue the construction inductively, we construct an orthonormal system $(u_n)_{n=1}^{\infty}$ satisfying 
\eqref{diag} and \eqref{nondiag}.

It remains to show that it is a basis.
Note that for $n\in B_{r_0}$ we have 
\begin{align*}
\|(I-P_n)y_{m(n)}\|^2=&
\|(I-P_{n-1})y_{m(n)}\|^2-|\langle (I-P_{n-1})y_{m(n)},u_n\rangle|^2\\
=&
\|(I-P_{n-1})y_{m(n)}\|^2-\|(I-P_{n-1})y_{m(n)}\|^2\cdot|\langle b_n,u_n\rangle|^2\\
=&
\|(I-P_{n-1})y_{m(n)}\|^2\Bigl(1-\frac{\de_n}{\rho_n+\de_n}\Bigr)
\end{align*}
and
\begin{align*}
\ln\|(I-P_{n})y_{m(n)}\|^2=&
\ln\|(I-P_{n-1})y_{m(n)}\|^2+\ln\Bigl(1-\frac{\de_n}{\rho_n+\de_n}\Bigr)\\
\le&\ln\|(I-P_{n-1})y_{m(n)}\|^2-
\frac{\de_n}{\rho_n+\de_n}\\
\le&
\ln\|(I-P_{n-1})y_{m(n)}\|^2-\frac{\dist\{\la_n,\partial W_e(T)\}}{6\|T\|}.
\end{align*}
Now for fixed $m\in\NN$, we have
\begin{align*}
\lim_{n\to\infty}\ln\|(I-P_n)y_m\|^2\le& \lim_{n\to\infty}\sum_{j\le n\atop m(j)=m}\frac{-\dist\{\la_j,\partial W_e(T)\}}{6\|T\|}\\
=-&\sum_{n\in A_m}\frac{\dist\{\la_n,\partial W_e(T)\}}{6\|T\|}=-\infty.
\end{align*}
So $y_m\in\bigvee_{n=1}^\infty u_n$ for all $m$. Hence $(u_n)_{n=1}^{\infty}$ is an orthonormal basis,
and the proof is finished.
$\hfill \Box$

\bigskip

Under assumptions somewhat stronger than those in Theorem \ref{band},
 our techniques allows one to construct three diagonals of $T$ with  upper and lower sub-diagonals 
depending only on the $\sup$-norm of its main diagonal. To this aim, we first prove the next auxiliary lemma. 

\begin{lemma}\label{2d}
Let $T\in B(H)$, and let $\lambda \in\Int W_e(T)$ be such  that $$\dist\{\lambda ,\partial W_e(T)\}>\e>0.$$ Let $M\subset H$ be a subspace of finite codimension. Then there exists a unit vector $u\in M$ satisfying the following conditions:
\begin{itemize}
\item[(a)] $\langle Tu,u\rangle=\lambda$;

\item[(b)] if $w=Tu-\langle Tu,u\rangle u$, $w'=T^*u-\langle T^*u,u\rangle u$ and $\al,\beta\in\CC,$ then there exists $z\in\bigvee\{w,w'\}$ such that 
$$
\langle w,z\rangle=\al, \qquad \langle w',z\rangle=\beta \quad \text{and}
\quad \|z\|\le\frac{2(|\al|+|\beta|)}{\e}.
$$
\end{itemize}
\end{lemma}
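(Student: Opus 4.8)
The plan is to produce the unit vector $u$ by a two-step compression argument, then to obtain the vector $z$ of part (b) by a genuinely two-dimensional (or at most three-dimensional) linear-algebra computation. First I would use the hypothesis $\dist\{\lambda,\partial W_e(T)\}>\e$ together with \eqref{inc_w}: since $\lambda\in\Int W_e(T)$ and $W_e(T)=W_e((I-P)T(I-P))$ for every finite-rank projection $P$, the ball of radius $\e$ around $\lambda$ lies in $W((I-P)T(I-P))$ for $P$ the projection onto $M^\perp$. This already gives a unit vector $u\in M$ with $\langle Tu,u\rangle=\lambda$, which is (a). The point of the $\e$-room is to control (b): I want the vectors $w=Tu-\lambda u$ and $w'=T^*u-\bar\lambda u$ — which measure the failure of $u$ to be an exact eigenvector of $T$ and $T^*$ — not to be too ``aligned'', so that the $2\times2$ Gram matrix $G=\begin{pmatrix}\|w\|^2 & \langle w',w\rangle\\ \langle w,w'\rangle & \|w'\|^2\end{pmatrix}$ is boundedly invertible on its range.

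The key observation for the quantitative bound in (b) is that being able to hit the point $\lambda$ in the numerical range of $(I-P)T(I-P)$ for \emph{all} of a neighbourhood of $\lambda$ forces the compression to be genuinely non-degenerate in the directions $w,w'$. Concretely, I would argue as follows: consider the two-dimensional space $V=\bigvee\{w,w'\}$ (if $w,w'$ are dependent, or one vanishes, the argument only gets easier and one works in a one-dimensional $V$). Given $\al,\beta\in\CC$, I seek $z\in V$ with $\langle w,z\rangle=\al$, $\langle w',z\rangle=\beta$. Writing $z=xw+yw'$ this is the linear system $G^*\binom{x}{y}=\binom{\bar\al}{\bar\beta}$ (up to conjugation bookkeeping), and $\|z\|^2=\langle G\binom{x}{y},\binom{x}{y}\rangle$. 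So the claimed bound $\|z\|\le 2(|\al|+|\beta|)/\e$ amounts to a lower bound on the smallest singular value of $G$ restricted to $\mathrm{span}\{w,w'\}$: I need $\|G^{-1}\|\lesssim \e^{-2}$ on that span, equivalently $\langle Gv,v\rangle\ge c\,\e^2\|v\|^2$ for $v$ in the span. I expect this to follow because, if the compression $(I-P)T(I-P)$ had $\langle Tu,u\rangle=\lambda$ but $w,w'$ nearly collinear and short in some direction, one could perturb $u$ within $M$ in that direction and move $\langle Tu,u\rangle$ only by $O(\|w\|+\|w'\|)$ in a \emph{constrained} way, contradicting that the \emph{whole} $\e$-ball is attained. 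Making this precise is the main obstacle.

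More carefully, here is the route I would push through for the non-degeneracy. Perturb: for a small vector $h\in M$ with $h\perp u$, the unit vector $u_h=(u+h)/\|u+h\|$ satisfies
\begin{equation*}
\langle Tu_h,u_h\rangle=\lambda+\langle Th,u\rangle+\langle Tu,h\rangle+O(\|h\|^2)=\lambda+\langle h,w'\rangle+\langle w,h\rangle+O(\|h\|^2),
\end{equation*}
using $\langle Th,u\rangle=\langle h,T^*u\rangle=\langle h,w'\rangle$ (as $h\perp u$) and $\langle Tu,h\rangle=\langle w,h\rangle$. Hence the first-order image of the map $h\mapsto\langle Tu_h,u_h\rangle$ is exactly the set of values $\langle h,w'\rangle+\langle w,h\rangle$. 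Since by construction every point of the $\e$-ball around $\lambda$ is of the form $\langle Tu',u'\rangle$ for \emph{some} unit $u'\in M$ — and in particular $\lambda$ is attained at $u$ where this derivative is well-defined — one concludes that the real-linear map $h\mapsto 2\Re\langle w,h\rangle + \dots$ from $\{h\in M:h\perp u\}$ must have image covering a real $\e$-disc's worth of directions; this is what pins down $\|w\|$ and $\|w'\|$ from below and, with a bit more care comparing $w$ and $w'$ (e.g. testing $h$ in the directions $w$, $w'$, $iw$, $iw'$), gives $\langle Gv,v\rangle\ge (\e^2/4)\|v\|^2$ on $\mathrm{span}\{w,w'\}$. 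I would then just solve the $2\times2$ system and read off $\|z\|\le 2(|\al|+|\beta|)/\e$, perhaps after absorbing a harmless constant by shrinking $\e$; the factor $2$ and the clean form of the bound suggest the authors set things up so the constant works out exactly, and I would calibrate the perturbation estimate to match. The genuinely delicate point — and the step I expect to consume most of the work — is turning ``the whole $\e$-ball is in the numerical range of the compression'' into the uniform lower bound $c\e^2$ on the Gram form of $(w,w')$; everything after that is bookkeeping with a $2\times2$ matrix.
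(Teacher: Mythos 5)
Your step (a) is fine and matches the paper's starting point, but the route to (b) has a genuine gap: the uniform lower bound on the Gram form of $(w,w')$ simply does not hold for an arbitrary unit vector $u\in M$ with $\langle Tu,u\rangle=\lambda$, so it cannot be extracted from the hypothesis after $u$ has already been chosen. The flaw is in the sentence ``one could perturb $u$ within $M$ in that direction and move $\langle Tu,u\rangle$ only \ldots in a constrained way, contradicting that the whole $\e$-ball is attained'': the $\e$-ball being contained in the numerical range of the compression is a \emph{global} statement about the image of the whole unit sphere of $M$, whereas degeneracy of the derivative $h\mapsto \langle w,h\rangle+\langle h,w'\rangle$ is a \emph{local} statement at the particular preimage $u$. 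There is no contradiction between the two. Concretely, take $T=\la P\oplus S$ with $P$ a rank-one projection onto $\CC u_0$ and $S$ a bilateral shift, and $M=H$: then $\la\in\Int W_e(T)$, the vector $u=u_0$ satisfies $\langle Tu,u\rangle=\la$, yet $w=w'=0$, and (b) fails for any $\al\ne 0$. So the non-degeneracy you need is a property that must be \emph{built into the construction of} $u$, not deduced afterwards; your proposal never does this, and you correctly identify this step as the one you cannot complete.

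The paper's proof supplies exactly the missing construction. It picks four unit vectors $x_1,\dots,x_4\in M$, mutually orthogonal and with $x_j\perp Tx_i, T^*x_i$ for $i<j$, realizing the diagonal values $\la+\e$, $\la+i\e$, $\la-\e$, $\la-i\e$ (possible since all four points lie in $\Int W_e(T)$ by the distance hypothesis, using \eqref{inc_w} repeatedly on shrinking finite-codimensional subspaces), and sets $u=\tfrac12(x_1+x_2+x_3+x_4)$. Averaging gives $\langle Tu,u\rangle=\la$, and a direct computation gives $\langle w+\eta w',x_1\rangle=\tfrac{\e(1+\eta)}{2}$ and $\langle w+\eta w',x_2\rangle=\tfrac{i\e(1-\eta)}{2}$ for every $\eta\in\CC$, whence $\|w+\eta w'\|\ge\tfrac{\e}{2}$ and $\|\eta w+w'\|\ge\tfrac{\e}{2}$. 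This is precisely the quantitative non-collinearity you were after, and from it the paper builds $z$ as a dual-basis combination $z=\al(I-P')w/\|(I-P')w\|^2+\beta(I-P)w'/\|(I-P)w'\|^2$, giving the stated bound $\|z\|\le 2(|\al|+|\beta|)/\e$ — this last part agrees in spirit with your ``$2\times2$ bookkeeping''. To repair your proof you would need to replace the generic choice of $u$ by such an explicit superposition (or otherwise prove that a $u$ can be chosen at which the perturbation map is uniformly non-degenerate, which is essentially the same work).
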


\begin{proof}
Since $\lambda \pm \e$ and $\lambda \pm i\e\in\Int W_e(T),$ the property \eqref{inc_w} implies that there exists a unit vector $x_1\in M$
such that $\langle Tx_1,x_1\rangle=\lambda +\e$.

Similarly, there exists a unit vector $x_2\in M\cap\{x_1,Tx_1,T^*x_1\}^\perp$ such that $\langle Tx_2,x_2\rangle=\lambda+i\e$, and unit vectors $x_3\in M\cap\{x_j,Tx_j,T^*x_j:j=1,2\}^\perp$ and $x_4\in M\cap\{x_j,Tx_j,T^*x_j:j=1,2,3\}^\perp$ with $\langle Tx_3,x_3\rangle=\lambda -\e$ and $\langle Tx_4,x_4\rangle=\lambda-i\e$.

Let $$u=\frac{1}{2}(x_1+x_2+x_3+x_4).$$ Then $u\in M$, $\|u\|=1$ and 
$$
\langle Tu,u\rangle=
\frac{1}{4}\Bigl(\langle Tx_1,x_1\rangle+\langle Tx_2,x_2\rangle+\langle Tx_3,x_3\rangle+\langle Tx_4,x_4\rangle\bigr)=\lambda.
$$

Let 
$$
w=Tu-\langle Tu,u\rangle u \qquad  \text{and}\qquad w'=T^*u-\langle T^*u,u\rangle u.
$$
 Let $\eta\in\CC$. Then
\begin{align*}
\langle w+\eta w',x_1\rangle=&
\langle Tu,x_1\rangle-\langle Tu,u\rangle\cdot\langle u,x_1\rangle
+\eta\langle T^*u,x_1\rangle-\eta\langle T^*u,u\rangle\cdot\langle u,x_1\rangle\\
=&
\frac{1}{2}\langle Tx_1,x_1\rangle -\frac{\lambda}{2}+\frac{\eta\langle T^*x_1,x_1\rangle}{2}-\frac{\eta \bar \lambda}{2}\\
=&
\frac{\lambda +\e}{2}-\frac{\lambda}{2}+\frac{\eta(\bar \lambda +\e)}{2}-\frac{\eta \bar \lambda}{2}\\
=&
\frac{\e(1+\eta)}{2}
\end{align*}
and similarly,
\begin{align*}
\langle w+\eta w',x_2\rangle=&
\langle Tu,x_2\rangle-\langle Tu,u\rangle\cdot\langle u,x_2\rangle
+\eta\langle T^*u,x_2\rangle-\eta\langle T^*u,u\rangle\cdot\langle u,x_2\rangle\\
=&
\frac{1}{2}\langle Tx_2,x_2\rangle -\frac{\lambda}{2}+\frac{\eta\langle T^*x_2,x_2\rangle}{2}-\frac{\eta \bar \lambda}{2}\\
=&
\frac{\lambda +i\e}{2}-\frac{\lambda}{2}+\frac{\eta(\bar \lambda -i\e)}{2}-\frac{\eta \bar \lambda}{2}\\
=&
\frac{i\e(1-\eta)}{2}.
\end{align*}
So
\begin{align*}
\|w+\eta w'\|\ge&
\max\bigl\{|\langle w+\eta w',x_1\rangle|, |\langle w+\eta w',x_2\rangle|\bigr\}\\
=&\frac{\e}{2}\max\{|1+\eta|,|1-\eta|\}
\ge \frac{\e}{2}.
\end{align*}
In the same way, one can show $$\|\eta w+w'\|\ge\frac{\e}{2}$$ for all $\eta\in\CC$.

Let $L$ be the two-dimensional subspace generated by $w$ and $w'$. Let $P$ and $P'$ be the orthogonal projections from $L$ onto the one-dimensional subspaces generated by $w$ and $w'$, respectively.
We then have
$$
\|(I-P')w\|\ge\frac{\e}{2} \qquad \text{and} \qquad  \|(I-P)w'\|\ge\frac{\e}{2}.
$$

Let $\al,\beta\in\CC$, and let
$$
z=\frac{\al (I-P')w}{\|(I-P')w\|^2}+\frac{\beta (I-P)w'}{\|(I-P)w\|^2}.
$$
Then
$$
\langle w,z\rangle=\frac{\al \langle w, (I-P')w\rangle}{\|(I-P')w\|^2}=\al
\qquad
\text{and} \qquad
\langle w',z\rangle=\beta.
$$
Finally, 
$$
\|z\|\le \frac{|\al|}{\|(I-P')w\|}+\frac{|\beta|}{\|(I-P)w'\|}\le
\frac{2(|\al|+|\beta|)}{\e}.
$$
\end{proof}

The proof of Theorem \ref{three_diag} is similar to the proof of Theorem \ref{band},
but it is technically more demanding.

\bigskip

{\it Proof of Theorem \ref{three_diag}}\,\,
Note that the assumption $\|T\|\le 1$ implies that $|\e|\le 1$. 

We fix an orthonormal basis $(y_m)_{m=1}^\infty$ in $H$,
and construct the basis $(u_n)_{n=1}^{\infty}$ inductively.

For $n=1$, find a unit vector $u_1\in H$ with $\langle Tu_1,u_1\rangle=\la_1$ such that the vectors 
$$
w_1:=Tu_1-\langle Tu_1,u_1\rangle u_1 \qquad \text{and}  \qquad w'_1:=T^*u_1-\langle T^*u_1,u_1\rangle u_1
$$
 satisfy condition (ii) of Lemma \ref{2d}, i.e., for all $\al,\beta\in\CC$ there exists $z\in\bigvee\{w_1,w_1'\}$ such that
$$ \langle w_1,z\rangle=\al,  \qquad \langle w'_1,z\rangle=\beta, \quad \text{and}\quad  \|z\|\le\frac{2(|\al|+|\beta|)}{\epsilon}.$$
Set formally $v_1=u_1$, $z_1=b_1=0$.

Let $n\ge 2$ and suppose that the orthonormal vectors $u_1,\dots,u_{n-1}$ satisfying (i), (ii), and (iii) of Theorem \ref{three_diag} have already been constructed. To run the induction, we also assume that $u_{n-1}$ satisfies 
$$
u_{n-1}=v_{n-1}+z_{n-1}+b_{n-1},
$$ 
where $\|z_{n-1}\|\le\frac{\sqrt{\e}}{2}$, $\|b_{n-1}\|\le\frac{\e\sqrt{\e}}{32}$, and
$$
v_{n-1}\perp\{u_1,\dots,u_{n-2},z_{n-1},b_{n-1},Tz_{n-1},Tb_{n-1},T^*z_{n-1},T^*b_{n-1}\}.
$$
Moreover, if
$$w_{n-1}:=Tv_{n-1}-\langle Tv_{n-1},v_{n-1}\rangle v_{n-1} \quad \text{and} \quad  w'_{n-1}:=T^*v_{n-1}-\langle T^*v_{n-1},v_{n-1}\rangle v_{n-1},$$ then we suppose also that $w_{n-1}$ and $w'_{n-1}$ satisfy condition (b) of Lemma \ref{2d}, i.e., for all $\al,\beta\in\CC$ there exists $z\in\bigvee\{w_{n-1},w_{n-1}'\}$ with
$$ \langle w_{n-1},z\rangle=\al, \qquad \langle w'_{n-1},z\rangle=\beta \quad \text{and}\quad  \|z\|\le
\frac{2(|\al|+|\beta|)}{\epsilon}.$$

Denote by $P_{n-1}$ the orthogonal projection onto the subspace $M_{n-1}:=\bigvee_{j=1}^{n-1} u_j.$

Denote by $A_n$ the set of all positive integers $m$ such that $y_m\notin M_{n-1}$ and
$$
\sup\bigl\{|\langle(I-P_{n-1})y_m,z\rangle|:z\in\bigvee\{u_{n-1},Tu_{n-1},T^*u_{n-1}\},\,\, \|z\|=1\bigr\}$$
does not exceed $$\|(I-P_{n-1})y_m\|\cdot\frac{\e}{32}.$$

Since $y_m\to 0, m \to \infty,$ weakly, the set $A_n$ contains all but a finite number of $m\in\NN$, so in particular $A_n\ne\emptyset.$  

Let $m(n)$ be any number $m\in A_n$ minimizing the quantity $m+\card\bigl\{k: 2\le k\le n-1, m(k)=m\bigr\}.$
(If there are more than one such numbers, then fix any of them).

Let
$$
b_n=\frac{(I-P_{n-1})y_{m(n)}}{\|(I-P_{n-1})y_{m(n)}\|}\cdot\frac{\e\sqrt{\e}}{32}.
$$
Observe that
\begin{align*}
\|z_{n-1}+b_{n-1}\|^2\le&
\Bigl(\frac{\sqrt{\e}}{2}+\frac{\e\sqrt{\e}}{32}\Bigr)^2
\le \e\cdot\Bigl(\frac{1}{2}+\frac{1}{32}\Bigr)^2\le
\frac{8\e}{25}\le\frac{8}{25},\\
1-\|z_{n-1}+b_{n-1}\|^2\ge& \frac{17}{25},
\end{align*}
and
$$
\sqrt{1-\|z_{n-1}+b_{n-1}\|^2}\ge \frac{4}{5}.
$$

By Lemma \ref{2d}, there exists $z_n\in\bigvee\{w_{n-1},w'_{n-1}\}$ such that
\begin{align*}
\langle w_{n-1},z_n\rangle=&\frac{\mu_{n-1}-\langle Tu_{n-1},b_n\rangle}{\sqrt{1-\|z_{n-1}+b_{n-1}\|^2}},\\
\langle w'_{n-1},z_n\rangle=&\frac{\bar\nu_{n-1}-\langle T^*u_{n-1},b_n\rangle}{\sqrt{1-\|z_{n-1}+b_{n-1}\|^2}},
\end{align*}
and
\begin{align*}
\|z_n\|\le&\frac{2}{\e}\Bigl(\Bigl|\frac{\mu_{n-1}-\langle Tu_{n-1},b_n\rangle}{\sqrt{1-\|z_{n-1}+b_{n-1}\|^2}}\Bigr|+
\Bigl|\frac{\bar\nu_{n-1}-\langle T^*u_{n-1},b_n\rangle}{\sqrt{1-\|z_{n-1}+b_{n-1}\|^2}}\Bigr|\Bigr)\\
\le&
\frac{2}{\e}\cdot\frac{5}{4}\Bigl(\frac{2\e\sqrt{\e}}{16}+2\|b_n\|\Bigr)\le 5\sqrt{\e}\Bigl(\frac{1}{16}+\frac{1}{32}\Bigr)\\
\le& \frac{\sqrt{\e}}{2}.
\end{align*}
Note that as above, 
$$\|z_n+b_n\|^2\le\frac{8\e}{25}\le\frac{8}{25}, \qquad 1-\|z_n+b_n\|^2\ge\frac{17}{25} \quad \text{and}
\quad \sqrt{1-\|z_n+b_n\|^2}\ge\frac{4}{5}.$$

Set
$$
\la_n'=\frac{\la_n-\bigl\langle T(z_n+b_n),z_n+b_n\bigr\rangle}{1-\|z_n+b_n\|^2}.
$$
We have
\begin{align*}
|\la'_n-\la_n|\le&
\Bigl|1-\frac{1}{1-\|z_n+b_n\|^2}\Bigr|+\frac{\|z_n+b_n\|^2}{1-\|z_n+b_n\|^2}\\
=&
\frac{2\|z_n+b_n\|^2}{1-\|z_n+b_n\|^2}\\
\le& \frac{ 2\e\frac{8}{25}}{\frac{17}{25}}\le \e.
\end{align*}
So $\dist\{\la'_n,\partial W_e(T)\}>\e$. By Lemma \ref{2d} again, there exists a unit vector $v_n$ such that
\begin{align*}
v_n \perp& \bigvee_{j=1}^{n-1 }\{ u_j,Tu_j,T^*u_j\}, \\
v_n\perp&\{y_{m(n)}, Ty_{m(n)}, T^*y_{m(n)}\},\\
\{v_n,Tv_n,T^*v_n\}\perp&\{z_n,Tz_n,T^*z_n,b_n, Tb_n, T^*b_n\},\\
\langle Tv_n,v_n\rangle=&\la'_n,
\end{align*}
and the vectors 
$$w_n=Tv_n-\langle Tv_n,v_n\rangle v_n \qquad \text{and} \qquad
w'_n=T^*v_n-\langle T^*v_n,v_n\rangle v_n$$
 satisfy condition (b) of Lemma \ref{2d}.

Note that $w_n,w'_n\in\bigvee\{v_n,Tv_n.T^*v_n\}$ and so $w_n$ and $w'_n$ are orthogonal to $\bigvee \{z_n,b_n\}$. Similarly, $Tw_n,$ $T^*w_n,$ $Tw'_n,$ and $T^*w'_n$ are orthogonal 
to $\bigvee \{z_n,b_n\}$.

Let
$$
u_n=\sqrt{1-\|z_n+b_n\|^2} v_n+z_n+b_n.
$$
Since $v_n\perp \bigvee\{z_n, b_n\}$, we have $\|u_n\|=1$.

By definition, $v_n\perp M_{n-1}$ and $b_n\perp M_{n-1}$. Moreover, 
$$z_n\in\bigvee\{w_{n-1},w'_{n-1}\}\subset\{u_1,\dots,u_{n-2}\}^\perp.$$

Furthermore, $w_{n-1}$ and $w'_{n-1}$ are orthogonal to $v_{n-1}$ and
$$
\{w_{n-1},w'_{n-1}\}\subset \bigvee\{Tv_{n-1},T^*v_{n-1},v_{n-1}\}\subset\{z_{n-1},b_{n-1}\}^\perp.
$$
So $z_n\perp u_{n-1}$  and $u_n\perp \bigvee \{u_1,\dots,u_{n-1}\}$.

Moreover, 
$$
Tz_n\in\bigvee\{Tw_{n-1},Tw'_{n-1}\}\subset\bigvee\{Tv_{n-1},T^2v_{n-1},TT^*v_{n-1}\}\subset\{z_{n-1},b_{n-1}\}^\perp,
$$
and similarly, 
$$T^*z_n\perp \bigvee \{z_{n-1}, b_{n-1}\}.$$

Now we are ready to show that $u_n$ verifies conditions (i), (ii) and (iii) of Theorem \ref{three_diag}. We have
\begin{align*}
\langle Tu_n,u_n\rangle=&
(1-\|z_n+b_n\|^2)\langle Tv_n,v_n\rangle +\bigl\langle T(z_n+b_n),z_n+b_n\bigr\rangle\\
 =&
(1-\|z_n+b_n\|^2)\la_n'+\bigl\langle T(z_n+b_n),z_n+b_n\bigr\rangle=
\la_n.
\end{align*}
Similarly,
\begin{align*}
\langle Tu_{n-1},u_n\rangle=&
\langle Tu_{n-1},z_n\rangle+\langle Tu_{n-1},b_n\rangle\\
=&
\sqrt{1-\|z_{n-1}+b_{n-1}\|^2}\langle Tv_{n-1},z_n\rangle+\langle Tu_{n-1},b_n\rangle\\
=&\sqrt{1-\|z_{n-1}+b_{n-1}\|^2}\langle w_{n-1},z_n\rangle+\langle Tu_{n-1},b_n\rangle\\
=&\mu_{n-1},
\end{align*}
and 
\begin{align*}
\langle Tu_n,u_{n-1}\rangle=&
\langle Tz_n,u_{n-1}\rangle +\langle Tb_n,u_{n-1}\rangle\\
=&
\sqrt{1-\|z_{n-1}+b_{n-1}\|^2}\langle Tz_n,v_{n-1}\rangle+\langle Tb_n,u_{n-1}\rangle\\
=&\sqrt{1-\|z_{n-1}+b_{n-1}\|^2}\langle z_n,w'_{n-1}\rangle+\langle Tb_n,u_{n-1}\rangle\\
=&\nu_{n-1}.
\end{align*}

Constructing the vectors $u_n$ inductively as above, we obtain an orthonormal system  $(u_n)_{n=1}^{\infty}$ satisfying all of the conditions 
of Theorem \ref{three_diag}. It remains to show that it is a basis, i.e., that $\bigvee_{n=1}^\infty u_n$ contains all vectors $y_m,  m\in\NN$.
This is clearly true if $y_m$ belongs to the space $M_n$ for some $n$. If $y_m\notin\bigcup_{n=1}^\infty M_n$, then $m(n)=m$ for infinitely many values of $n$. 
Note that for every $n \ge 2$ 
\begin{align*}
\bigl|\langle (I-P_{n-1})y_{m(n)},u_n\rangle\bigr|=&
\bigl|\langle (I-P_{n-1})y_{m(n)},z_n+b_n\rangle\bigr|\\
\ge&
\bigl|\langle (I-P_{n-1})y_{m(n)},b_n\rangle\bigr|-\bigl|\langle (I-P_{n-1})y_{m(n)},z_n\rangle\bigr|\\
\ge& \|(I-P_{n-1})y_{m(n)}\|\cdot\frac{\e\sqrt{\e}}{32}\\
-&
\|(I-P_{n-1})y_{m(n)}\|\cdot\|z_n\|\cdot\frac{\e}{32}\\
\ge&
\|(I-P_{n-1})y_{m(n)}\|\cdot\frac{\e\sqrt{\e}}{64}
\end{align*}
since $m(n)\in A_n$,
and thus
\begin{align*}
\|(I-P_n)y_{m(n)}\|^2=&\|(I-P_{n-1})y_{m(n)}\|^2-|\langle (I-P_{n-1})y_{m(n)},u_n\rangle|^2\\
\le& \|(I-P_{n-1})y_{m(n)}\|^2\Bigl(1-\frac{\e^3}{2^{12}}\Bigr).
\end{align*}
Hence, taking into account that
$$\|(I-P_n)y_m\|\le\|(I-P_{n-1})y_m\|$$ for all $m\in\NN$, $m\ne m(n),$
we infer that
$$
\lim_{n\to\infty}\|(I-P_n)y_m\|^2\le
\lim_{r\to\infty} \Bigl(1-\frac{\e^3}{2^{12}}\Bigr)^r=0,
$$
so that $y_m\in\bigvee_{n=1}^\infty u_n.$ 

Thus $(u_n)_{n=1}^\infty$ is an orthonormal basis, and the statement is proved.

\section{Matrices with small entries: proofs}

In this section, we extend Stout's bound from Theorem \ref{stout}, (iii) by providing
a similar bound for all matrix elements of $T \in B(H)$ rather than merely the main diagonal of $T$
for an appropriate basis in $H.$ Clearly, this will also significantly improve Theorem \ref{stout}, (ii).   
To this aim, we need a simple lemma.
\begin{lemma}\label{lone}
Let $(a_n)_{n=1}^{\infty}\subset (0,\infty)$ be such $(a_n)_{n=1}^{\infty} \not \in \ell^1(\mathbb N)$. Then there exists 
 $(a'_n)_{n=1}^{\infty}\subset (0,\infty)$ such that $(a'_n)_{n=1}^{\infty} \not \in \ell^1(\mathbb N),$  $0<a'_n\le\max\{1,a_n\}$ for all $n\in \mathbb N,$ and $\lim_{n\to \infty}\frac{a'_n}{a_n}=0$.
\end{lemma}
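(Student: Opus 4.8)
The plan is to extract the failure of summability on a judiciously chosen subsequence, then spread the remaining mass back over all of $\NN$ while retaining divergence. Concretely, first I would reduce to the case $a_n\le 1$ for all $n$: replacing $a_n$ by $\min\{1,a_n\}$ can only decrease the terms, it keeps the sequence in $(0,\infty)$, the bound $a'_n\le\max\{1,a_n\}$ is unaffected, and divergence is preserved since the set where $a_n>1$ contributes only a bounded amount to a divergent series only if it is finite — and if it were infinite the reduced sequence would have infinitely many terms equal to $1$, hence still diverge. So assume $0<a_n\le 1$ and $\sum_n a_n=\infty$.

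Next I would build an increasing sequence of indices $1=N_1<N_2<\cdots$ by the standard blocking trick: choose $N_{k+1}>N_k$ so that $\sum_{n=N_k}^{N_{k+1}-1} a_n\ge 1$ for each $k\ge 1$, which is possible precisely because the tail sums are infinite. For $n$ in the $k$-th block $[N_k,N_{k+1})$, define $a'_n:=a_n/k$. Then $0<a'_n\le a_n\le\max\{1,a_n\}$, and the ratio $a'_n/a_n=1/k(n)\to 0$ as $n\to\infty$ since $k(n)\to\infty$. Finally, summing block by block,
\[
\sum_{n=1}^\infty a'_n=\sum_{k=1}^\infty \frac1k\sum_{n=N_k}^{N_{k+1}-1}a_n\ \ge\ \sum_{k=1}^\infty \frac1k=\infty,
\]
so $(a'_n)_{n=1}^\infty\notin\ell^1(\NN)$, as required.

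I do not expect a genuine obstacle here; the only point requiring a little care is the interplay between the two constraints, namely that the damping factor $1/k$ must go to zero (forcing the blocks to be finite and the block index to tend to infinity) while the per-block mass stays bounded below (so that the surviving series dominates the harmonic series). The blocking construction handles both simultaneously, and the preliminary reduction to $a_n\le 1$ is what lets me write the clean bound $a'_n\le a_n$ without worrying about the $\max$. One should also note the blocks are nonempty and exhaust $\NN$, so $k(n)$ is well defined for every $n$, completing the argument.
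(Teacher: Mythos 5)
Your proof is correct and uses essentially the same blocking-and-damping idea as the paper: partition $\NN$ into finite blocks carrying enough mass and divide by $1/k$ on the $k$-th block (the paper demands block mass $\ge k$ so each damped block contributes $\ge 1$, while you demand block mass $\ge 1$ and invoke the harmonic series; the paper caps via $a'_n=\min\{1,k^{-1}a_n\}$ where you cap by a preliminary reduction to $a_n\le 1$). These are only cosmetic differences, and all the steps, including the transfer of the ratio condition back to the original sequence, go through.
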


\begin{proof}
Set $n_0=0$. We construct numbers $n_k, k\in\NN$ inductively.

If $k\in\NN$ and the numbers $n_0,n_1,\dots,n_{k-1}$ have already been constructed, then find $n_k>n_{k-1}$ such that
$$
\sum_{n=n_{k-1}+1}^{n_k}a_n\ge k.
$$
For $n_{k-1}+1\le j\le n_k$ then set $a'_n=\min\{1,k^{-1}a_n\}$. So $\sum_{n=n_{k-1}+1}^{n_k}a'_n\ge 1$.

If the numbers $a'_n$ are constructed in this way, then clearly $\sum_{j=1}^\infty a'_n=\infty$ and $\lim_{j\to\infty}\frac{a'_n}{a_n}=0.$
\end{proof}

{\it Proof of Theorem \ref{stoutt}}\,\, 
The implication (ii)$\Rightarrow$ (i) is straightforward, so it remains to prove that (i)$\Rightarrow$(ii).
To this aim, without loss of generality, we may assume that $\|T\| \le 1$.

Using Lemma \ref{lone}, find a sequence $(a'_n)_{n=1}^\infty$ such that $0<a'_n\le\min\{1,a_n\}$ for all $n$, $(a'_n)_{n=1}^\infty \notin\ell^1$ and $\lim_{n\to\infty}\frac{a'_n}{a_n}=0.$

Let $(y_m)_{m=1}^\infty$ be a sequence of unit vectors in $H$ such that $\bigvee_{m\in\NN}y_m=H$.

Find mutually disjoint sets $A_m\subset\NN$ such that $\bigcup_{m\in\NN}A_m=\NN$ and
$$
\sum_{n\in A_m}a'_n=\infty
$$
for all $m\in\NN$. We may and will assume that $m\in\bigcup_{k=1}^m A_k$ for all $m$.

For $n\in\NN$ denote by $m(n)$ the uniquely determined integer satisfying
$n\in A_{m(n)}$. 

Define also 
$$
d(n):=\min \left \{r \in \mathbb N, r\ge n: \frac{a'_k}{a_k}<a_n\hbox{ for all } k\ge r \right \},
$$
and note that
 $$m(n)\le n\le d(n)$$ 
for all $n\in\NN$.

We construct the orthonormal basis $(u_n)_{n=1}^{\infty}$ inductively. Let $n\ge 1$ and suppose that the vectors $u_1,u_2,\dots,u_{n-1}\in H$ have already been constructed.

Since $0 \in W_e(T),$ there exists a unit vector $v_n\in H$ orthogonal to the union of the sets 
$$
\{u_j,Tu_j,T^*u_j: j=1,\dots,n-1\}
$$
and
$$
\{y_j,Ty_j,T^*y_j: j=1,2,\dots, d(n)\},
$$
such that
$$
|\langle Tv_n,v_n\rangle|<\frac{a'_n}{2}.
$$
Let $P_{n-1}$ be the orthogonal projection onto the subspace $M_{n-1}:=\bigvee_{j=1}^{n-1}u_j$.
If $y_{m(n)}\in M_{n-1}$ then set $u_n=v_n$. Suppose that $y_{m(n)}\notin M_{n-1}$. 
Then let 
$$
w_n=\frac{(I-P_{n-1})y_{m(n)}}{\|(I-P_{n-1})y_{m(n)}\|} \quad \text{and} \quad c_n=\frac{\sqrt{a'_n}}{2},
$$
and  set
$$
u_n=\sqrt{1-c_n^2}v_n+c_n w_n.
$$
Note that $w_n\in\bigvee \{u_j:j\le n-1\}\vee \{y_{m(n)}\}$. So, by the choice of $v_n,$ we have $w_n\perp v_n$, and similarly, $Tv_n\perp w_n$ and $T^*v_n\perp w_n$. In particular, 
\[
\|u_n\|=1 \qquad \text{and} \qquad u_n\perp \bigvee \{u_1,\dots,u_{n-1}\}.
\] 

First we estimate the diagonal term $\langle Tu_n,u_n\rangle$:
\begin{align*}
|\langle Tu_n,u_n\rangle|=&
\bigl|(1-c_n^2)\langle Tv_n,v_n\rangle + c_n^2 \langle T w_n,w_n\rangle\bigr|\\
\le&
|\langle Tv_n,v_n\rangle|+c_n^2\\
\le&
\frac{a'_n}{2}+\frac{a'_n}{4}\\
<&a_n.
\end{align*}

To estimate the non-diagonal terms $\langle Tu_j,u_n \rangle$ and $\langle Tu_n,u_j\rangle$ for $1\le j\le n-1$ and 
$n\ge 2$,
we distinguish two cases:

1) If $n\ge d(j),$ then
$$
|\langle Tu_j,u_n\rangle=
|\langle Tu_j,c_n w_n\rangle|\le c_n=
\frac{\sqrt{a'_n}}{2}\le \frac{\sqrt{a_n a_j}}{2}<\sqrt{a_n a_j}.
$$
Similarly,
$$
|\langle Tu_n,u_j\rangle|=
|\langle T^*u_j,u_n\rangle|<\sqrt{a_n a_j},
$$
so that the bound \eqref{bound} holds in this case.

2) If the opposite case  $n<d(j)$ holds. Then
\begin{align*}
|\langle Tu_j,u_n\rangle|=&
c_n|\langle Tu_j, w_n\rangle|\\
\le&
c_n|\langle Tv_j, w_n\rangle|
+c_nc_j|\langle Tw_j, w_n\rangle|\\
\le&
c_n|\langle Tv_j, w_n\rangle|+c_nc_j.
\end{align*}
Let $L$ be the subspace generated by the vectors
$\{v_k:1\le k\le n-1\}$ and $y_{m(1)},\dots,y_{m(n)}$.
Since for every $1\le k \le n,$ we have (by induction)
$$
w_k\in\bigvee\{u_1,\dots,u_{k-1}, y_{m(k)}\}\subset \bigvee\{v_1,\dots,v_{k-1},w_1,\dots,w_{k-1},y_{m(k)}\}\subset L,
$$
it follows that $\{w_k: 1 \le k \le n\} \subset L$.

Note that the vectors $v_1,\dots,v_{n-1}$ are mutually orthogonal.
Indeed, 
if $k <  k' \le n-1$, then  
by the choice of $v_{k'}$, we have $v_{k'}\perp u_1,\dots,u_k$ and $v_{k'}\perp y_{m(k)}$, 
since $m(k)\le k<k'\le d(k')$. Moreover, $v_{k}$ is a linear combination of $u_1,\dots,u_{k}, y_{m(k)}$. 
So $v_k\perp v_{k'}$.
Thus the subspace $L\cap \{v_j\}^\perp$ contains the vectors $v_k, 1\le k\le n-1, k\ne j.$ By the choice of $v_j$, since $m(k)\le k\le n< d(j)$ for all $k\le n$, it follows that
$L\cap \{v_j\}^\perp$ contains also the vectors $y_{m(1)},\dots,y_{m(n)}.$ 

Arguing similarly, we infer that  $Tv_k\perp v_{k'}$ and $T^*v_k\perp v_{k'},$ for $k <k'\le n-1,$ and 
moreover, by the choice of $v_j$, we have $Tv_j\perp y_{m(k)}$ and $T^*v_j\perp y_{m(k)}$ for any $k$ such that $1 \le k \le n$.

Hence 
\[v_j\perp T(L\cap \{v_j\}^\perp) \qquad \text{and} \qquad  v_j\perp T^*(L\cap \{v_j\}^\perp).\]
Therefore,
$$
|\langle Tv_j,w_n\rangle|=
|\langle Tv_j,v_j\rangle|\cdot|\langle w_n,v_j\rangle|\le
|\langle Tv_j,v_j\rangle|\le\frac{a'_j}{2},
$$
and then
$$
|\langle Tu_j,u_n\rangle|\le
\frac{c_na'_j}{2}+c_nc_j=
\frac{\sqrt{a'_n} \cdot a'_j}{4}+\frac{\sqrt{a'_n a'_j}}{4}<
\sqrt{a'_na'_j}\le\sqrt{a_na_j}.
$$

Similarly,
$$
|\langle Tu_n,u_j\rangle|=
|\langle T^*u_j,u_n\rangle|\le\sqrt{a_na_j}.
$$

Thus, the two estimates above imply that \eqref{bound} holds in the second case too.

Constructing the vectors $u_n, n\in\NN,$ in this way, we get the orthonormal system $(u_n)_{n=1}^\infty$
such that 
$$
|\langle Tu_n,u_j\rangle|\le\sqrt{a_n a_j}
$$
for all $n, j\in\NN$. 

We claim that, moreover, $(u_n)_{n=1}^\infty$ is an (orthonormal) basis of $H$. Let $m\in\NN$ be fixed. 
Since
$$
\|(I-P_k)y_m\|\le \|(I-P_{k-1})y_m\|
$$
for all $k \in \mathbb N,$ and 
\begin{align*}
\|(I-P_n)y_{m(n)}\|^2=&
\|(I-P_{n-1})y_{m(n)}\|^2-|\langle (I-P_{n-1})y_{m(n)},u_n\rangle|^2\\
=&
\|(I-P_{n-1})y_{m(n)}\|^2-|\langle (I-P_{n-1})y_{m(n)},c_nw_n\rangle|^2\notag\\
=&
\|(I-P_{n-1})y_{m(n)}\|^2(1-c_n^2)\notag \\
=&
\|(I-P_{n-1})y_{m(n)}\|^2\left(1-\frac{a'_n}{4}\right),\notag
\end{align*}
we conclude that
$$
\|(I-P_k)y_m\|^2\le \|(I-P_{k-1})y_m\|^2\Bigl(1-\frac{a'_k}{4}\Bigr)
$$
for all $k\in A_m$.
So
$$
\|(I-P_k)y_m\|^2\le
\prod_{j \le k \atop j\in A_m}\Bigl(1-\frac{a'_j}{4}\Bigr)
$$
and
$$
\lim_{k\to\infty}\ln\|(I-P_k)y_m\|^2\le
\sum_{k\in A_m}\ln\Bigl(1-\frac{a'_k}{4}\Bigr)\le
-\sum_{k\in A_m}\frac{a'_k}{4}=-\infty.
$$
Hence $y_m\in\bigvee\{u_n: n\in
\NN\}$. Since $\bigvee_{m\in\NN}y_m=H$, the claim follows.
This finishes the proof. $\hfill \Box$

\section{Matrices with large entries: proofs}
In this section we proceed with the proof of Theorem \ref{polynom} producing a matrix with large entries for $T\in B(H)$
with $W_e(T)$ containing more than two points.

We will need the next lemma similar in spirit to considerations in \cite[Section 2]{Brown}, see also \cite{Salinas}.
Recall that $T\in B(H)$ is compact if and only if $W_e(T)=\{0\}$. So $T$ is of the form $T=\la I+K$ for some $\la\in\CC$ and a compact operator $K\in B(H)$ if and only if $W_e(T)$ is a singleton, i.e., the diameter 
$$
\diam (W_e(T))=\max\{|\la-\mu|: \la,\mu\in W_e(T)\}=0.
$$

\begin{lemma}\label{pearcy}
Let $T\in B(H)$ be an operator which is not of the form $T=\la I+K$ for some $\la\in\CC$ and a compact operator $K\in B(H)$.
Let 
\[ 0<C<\frac{\diam (W_e(T))}{4\sqrt{2}} \qquad \text{and}\qquad  0<D<\frac{\diam (W_e(T))}{4}.\]
 Then 
for any subspace $M\subset H$ of finite codimension there exists a unit vector $u\in M$ such that
\begin{align*}
|\langle Tu,u\rangle|\ge& D,\\
\|Tu-\langle Tu,u\rangle u\|\ge& C,
\end{align*}
and
$$
\|T^*u-\langle T^*u,u\rangle u\|\ge C.
$$
\end{lemma}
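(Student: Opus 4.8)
The plan is to exploit the fact that $W_e(T)$ contains at least two distinct points $\la_1,\la_2$ with $|\la_1-\la_2|=\diam(W_e(T))=:2\Delta>0$, so that by convexity of $W_e(T)$ the whole segment $[\la_1,\la_2]$ lies in $W_e(T)$, hence (after a harmless rotation) a segment of length $2\Delta$ parallel to, say, the real axis. By \eqref{inc_w}, for any finite-codimensional $M$ and any finite collection of target values lying in $\Int W_e(T)$ — or more carefully, in $W_e(T)$, using that one can approximate points of $W_e(T)$ by unit vectors in $M$ that are moreover orthogonal to finitely many prescribed vectors and their $T$- and $T^*$-images — one can successively produce mutually ``$T$-orthogonal'' unit vectors $x_1,\dots,x_k\in M$ with $\langle Tx_i,x_i\rangle$ prescribed near chosen points of the segment. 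The candidate $u$ will then be a suitable \emph{equal-weight} combination $u=k^{-1/2}\sum_{i=1}^k x_i$, so that $\langle Tu,u\rangle$ is the average of the $\langle Tx_i,x_i\rangle$ (the cross terms $\langle Tx_i,x_j\rangle$ vanish by the orthogonality built into the construction).

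The key computation is to choose the targets and $k$ so that all three inequalities hold simultaneously. For $|\langle Tu,u\rangle|\ge D$: place the targets so their average is some point $c$ of the segment with $|c|\ge D$; since the segment has length $2\Delta$ and $D<\Delta/... $ wait — more precisely, since $\diam(W_e(T))=2\Delta$, $W_e(T)$ cannot be contained in the open disk of radius $\Delta$ about any point, so in particular it contains a point $c$ with $|c|\ge\Delta>2D$ when $D<\Delta/2=\diam(W_e(T))/4$; take all targets equal to (a good approximation of) that $c$, giving $\langle Tu,u\rangle\approx c$ and hence $|\langle Tu,u\rangle|\ge D$ once the approximation is fine enough. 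For $\|Tu-\langle Tu,u\rangle u\|\ge C$: write $Tu-\langle Tu,u\rangle u=k^{-1/2}\sum_i\bigl(Tx_i-\langle Tu,u\rangle x_i\bigr)$ and test against, say, $x_1$, using $\langle Tx_i,x_1\rangle=0$ for $i\ne 1$ (orthogonality) and $\langle x_i,x_1\rangle=0$ for $i\ne1$; this isolates a single term of size $\approx k^{-1/2}|\langle Tx_1,x_1\rangle-\langle Tu,u\rangle|$, which is too small. So instead one should \emph{not} take all targets equal: split the $x_i$ into two groups of size $k/2$ with targets near the two endpoints $\la_1,\la_2$ of the segment, making $u=k^{-1/2}\sum x_i$ satisfy $\langle Tu,u\rangle\approx(\la_1+\la_2)/2$ while for each group the deviation $\langle Tx_i,x_i\rangle-\langle Tu,u\rangle\approx\pm\Delta$ has modulus $\approx\Delta$. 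Then testing $Tu-\langle Tu,u\rangle u$ against the unit vector $y=(k/2)^{-1/2}\sum_{i\in\text{group }1}x_i$ gives $|\langle Tu-\langle Tu,u\rangle u,\,y\rangle|$ of order $(k/2)^{1/2}k^{-1/2}\cdot\Delta=\Delta/\sqrt2$, independent of $k$, which beats $C<\diam(W_e(T))/(4\sqrt2)=\Delta/(2\sqrt2)$ once the approximation errors are absorbed; the symmetric argument with $T^*$ in place of $T$ (noting $\langle T^*x_i,x_i\rangle=\overline{\langle Tx_i,x_i\rangle}$, so the deviations for $T^*u-\langle T^*u,u\rangle u$ are the complex conjugates and have the same moduli) handles the third inequality.

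The only real subtlety — and the step I expect to be the main obstacle — is the \emph{simultaneous} bookkeeping: one wants a single $u$ for which the averaged diagonal $\langle Tu,u\rangle$ has modulus $\ge D$ \emph{and} the two-group structure produces large off-diagonal deviations $\ge C$. This forces a careful choice of which points of $W_e(T)$ to use: one needs two points $\la_1,\la_2\in W_e(T)$ with $|\la_1-\la_2|=2\Delta$ \emph{and} midpoint of modulus $\ge D$; if the midpoint of the diameter-realizing pair happens to be close to $0$, one must instead argue that $W_e(T)$, being convex of diameter $2\Delta$, contains \emph{some} pair $\la_1,\la_2$ with $|\la_1-\la_2|$ still a definite fraction of $\Delta$ and midpoint of modulus $\ge D$ — this is an elementary planar-convex-geometry fact given $D<\Delta/2$. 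One then fixes $\e>0$ so small that all approximations (choosing $x_i$ with $\langle Tx_i,x_i\rangle$ within $\e$ of its target, and the resulting perturbations of $\langle Tu,u\rangle$ and of the test-vector inner products) leave strict inequalities $>D$ and $>C$ intact, chooses $k$ even (value of $k$ is essentially irrelevant since the main estimates are $k$-independent), and runs the finite inductive selection of $x_1,\dots,x_k$ inside $M$ via \eqref{inc_w}, at each stage excluding the previously chosen vectors together with their $T$- and $T^*$-images from the admissible subspace. Assembling $u=k^{-1/2}\sum x_i$ and verifying the three displayed inequalities is then routine.
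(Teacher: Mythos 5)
Your proposal is correct and is essentially the paper's argument: the paper's proof is exactly your scheme with $k=2$, taking unit vectors $x,y\in M$ with diagonal entries within $\e$ of $\la$ and $\mu=(\la+\nu)/2$ (where $\la,\nu\in W_e(T)$ realize the diameter and $|\la|\ge|\nu|$), setting $u=(x+y)/\sqrt2$, and testing $Tu-\langle Tu,u\rangle u$ (resp.\ $T^*u-\langle T^*u,u\rangle u$) against $x$. The ``planar-convex-geometry fact'' you defer is resolved by precisely this choice of pair: it has separation $|\la-\mu|=\diam(W_e(T))/2$ and midpoint $(3\la+\nu)/4$ of modulus at least $|\la|/2\ge\diam(W_e(T))/4$, which is exactly what your $k$-independent estimate $|\la_1-\la_2|/(2\sqrt2)$ needs in order to beat every $C<\diam(W_e(T))/(4\sqrt2)$ and every $D<\diam(W_e(T))/4$ (your weaker ``definite fraction of $\Delta$'' would not suffice for these specific constants).
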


\begin{proof}
Since $T\ne \la I+K$, the set $W_e(T)$ contains at least two points. Let $\la,\nu\in W_e(T)$ satisfy $|\la-\nu|=\diam (W_e(T))$.
Without loss of generality, we may assume that $|\la|\ge|\nu|$. Set $\mu=\frac{\la+\nu}{2}$. Then $\mu\in W_e(T)$ since it is a convex set.

We have 
$$
|\la-\mu|=\frac{|\la-\nu|}{2}=\frac{\diam (W_e(T))}{2}
$$
and
$$
|\la+\mu|=\Bigl|\frac{3\la}{2}+\frac{\nu}{2}\Bigr|\ge
|\la|\ge\frac{\diam (W_e(T))}{2}.
$$

Let $C$ and $D$ satisfy 
\[
0< D< \frac{\diam (W_e(T))}{4} \qquad \text{and} \qquad 0<C<\frac{\diam (W_e(T))}{4\sqrt{2}}.
\]
Let $M\subset H$ be a subspace of a finite codimension.

Let $\e>0$ satisfy $\e<\frac{|\la+\mu|}{2}-D$ and $\e<\frac{ |\la-\mu|}{2}-C\sqrt{2}$.

Find a unit vector $x\in M$ such that $|\langle Tx,x\rangle-\la|<\e$.
Let $M'=M\cap\{x,Tx,T^*x\}^\perp$. Then $\codim M'<\infty$ and there exits a unit vector $y\in M'$ such that
$|\langle Ty,y\rangle-\mu|<\e$.

Set $$u=\frac{x+y}{\sqrt{2}}.$$ Clearly $u\in M$. Since $y\perp x$, we have $\|u\|=1$.

We have
$$
|\langle Tu,u\rangle|= \frac{|\langle Tx,x\rangle+\langle Ty,y\rangle|}{2}\ge
\frac{|\la+\mu|}{2}-\e>D.
$$
Furthermore,
\begin{align*}
\|Tu-\langle Tu,u\rangle u\|\ge& \bigl|\langle Tu,x\rangle-\langle Tu,u\rangle\cdot\langle u,x\rangle\bigr|
\ge
\Bigl|\frac{\langle Tx,x\rangle}{\sqrt{2}}-\frac{\langle Tu,u\rangle}{\sqrt{2}}\Bigr|\\
\ge& \frac{1}{\sqrt{2}}\frac{\bigl|\langle Tx,x\rangle-\langle Ty,y\rangle\bigr|}{2}\ge
\frac{|\la-\mu|}{2\sqrt{2}}-\frac{\e}{\sqrt{2}}>C,
\end{align*}
and similarly,
\begin{align*}
\|T^*u-\langle T^*u,u\rangle u\|
\ge
\frac{|\bar\la-\bar\mu|}{2\sqrt{2}}-\frac{\e}{\sqrt{2}}>C.
\end{align*}

\end{proof}

Now we are ready to prove Theorem \ref{polynom}. Apart from Lemma \ref{pearcy}, its proof will essentially rely 
on plank type Theorem \ref{plank}.

\bigskip
 
{\it Proof of Theorem \ref{polynom}}\,\, 
Without loss of generality we may assume that $\|T\|=1$.

Fix a sequence $(y_m)_{m=0}^\infty$ of unit vectors in $H$ such that $\bigvee_{m=0}^\infty y_m=H$.

For $n\in\NN$ denote by $m(n)$ the unique non-negative integer such that $n=2^{m(n)}(2k-1)$ for some $k\in\NN$.
(One may introduce the corresponding sets $A_m, m \ge 0,$ as in the proofs of our previous results, but 
in this case such a procedure seems to be not justified.)

Let $C$ and $D$ be the constants given by Lemma \ref{pearcy}, i.e., for every subspace $M\subset H$ of finite codimension there exists a unit vector $u\in M$ with 
$$|\langle Tu,u\rangle|\ge D,\quad 
\|Tu-\langle Tu,u\rangle u\|\ge C \quad \text{and}\quad  \|T^*u-\langle T^*u,u\rangle u\|\ge C, $$
where $C$ and $D$ are independent of $M.$

Let $d= 2^{-1}{D} $. Fix $a\ge 1 $ such that 
$$\frac{4}{a}\le D \qquad \text{and}\qquad \frac{54}{\sqrt{a}}\le C^2,$$
and set 
$$c_1=\frac{C}{3\sqrt{a}} \qquad \text{and}\qquad  c_2=\frac{1}{\sqrt{a}}.$$

We construct the vectors $u_n, n \ge 1,$ inductively. Let $n\ge 1$ and suppose we have constructed   
mutually orthogonal unit vectors $u_1,\dots,u_{n-1}$ satisfying 
\begin{equation}\label{(1)}
|\langle Tu_j,u_j\rangle|\ge d, \qquad 1\le j\le n-1,
\end{equation}
and
\begin{equation}\label{(2)}
\frac{c_1\min\{k,j\}^{1/2}}{\max\{k,j\}^{3/2}}\le
|\langle Tu_k,u_j\rangle|\le \frac{c_2}{\max\{k,j\}^{1/2}}
\end{equation}
for all $1\le k,j\le n-1$ such that $k\ne j,$ as well as,
\begin{align}
\|(I-P_k)Tu_j\|^2\ge\frac{C^2j}{2k},\qquad 1\le j\le k\le n-1 ,\label{(3)}\\
\|(I-P_k)T^*u_j\|^2\ge\frac{C^2j}{2k}, \qquad 1\le j\le k\le n-1,\label{(4)}
\end{align}
and
\begin{equation}
\|(I-P_{n-1})y_m\|^2\le \prod_{j\le n-1\atop m(j)=m} \Bigl(1-\frac{1}{2aj}\Bigr), \qquad m \ge 1,\label{(5)}
\end{equation}
where $P_k$ is the orthogonal projection onto the subspace $M_k:=\bigvee\{u_1,\dots,u_k\}$ for every  $1\le k \le n-1.$
(Note that if $2^m>n-1$ then the product in \eqref{(5)} is over the empty set and, by definition, it is equal to $1$. The inequality $\|(I-P_{n-1})y_m\|^2\le\|y_m\|^2=1$ is then satisfied automatically).

We construct the unit vector $u_n$ satisfying \eqref{(1)}--\eqref{(5)} in the following way.
Using Lemma \ref{pearcy}, find a unit vector $v_n$ such that
$$
v_n\perp\bigl\{u_j,Tu_j,T^*u_j, \,\, 1\le j\le n-1; y_{m(j)}, Ty_{m(j)}, T^*y_{m(j)}, \,\, 1\le j\le n\bigr\}
$$
and
\begin{align*}
|\langle Tv_n,v_n\rangle|\ge& D,\\
\|Tv_n-\langle Tv_n,v_n\rangle v_n\|\ge& C,\\
\|T^*v_n-\langle T^*v_n,v_n\rangle v_n\|\ge& C.
\end{align*}

Consider now the $(2n-1)$-tuple of vectors 
\[(I-P_{n-1})Tu_j, (I-P_{n-1})T^*u_j,\,  j=1,\dots,n-1, \quad \text{and} \quad 
(I-P_{n-1})y_{m(n)}.\]

By Theorem \ref{plank}, there exists a unit vector $z_n$ such that
$$
z_n\in\bigvee\left \{(I-P_{n-1})Tu_j, (I-P_{n-1})T^*u_j, \, 1 \le j \le n-1; (I-P_{n-1})y_{m(n)} \right \}
$$
and
\begin{align}
\bigl|\langle (I-P_{n-1})y_{m(n)},z_n\rangle\bigr|\ge& \frac{1}{\sqrt{2}}\cdot\|(I-P_{n-1})y_{m(n)}\|, \label{z1}\\
\bigl|\langle (I-P_{n-1})Tu_j,z_n\rangle\bigr|\ge& \frac{1}{2\sqrt{n}}\cdot\|(I-P_{n-1})Tu_j\|, \label{z2}
\\
\bigl|\langle (I-P_{n-1})T^*u_j,z_n\rangle\bigr|\ge& \frac{1}{2\sqrt{n}}\cdot\|(I-P_{n-1})T^*u_j\|.\label{z3}
\end{align}
for all  $1 \le j \le n-1$ (since $\Bigl(\frac{1}{\sqrt{2}}\Bigr)^2+ 2(n-1)\Bigl(\frac{1}{2\sqrt{n}}\Bigr)^2<1$).

Note that $z_n\in M_{n-1}^\perp$ and $v_n\perp z_n$. Moreover, $Tv_n\perp z_n$ and $T^*v_n\perp z_n$.

Set
$$
u_n=\frac{1}{\sqrt{an}} z_n+\sqrt{1-\frac{1}{an}} v_n.
$$
Clearly 
$$\|u_n\|=1 \qquad \text{and} \qquad u_n\perp \bigvee \{u_1,\dots,u_{n-1}\}.$$

Let us show that $u_n$ satisfies conditions \eqref{(1)}--\eqref{(5)}.

We have
\begin{align*}
|\langle Tu_n,u_n\rangle|=&
\Bigl| \Bigl(1-\frac{1}{an}\Bigr)\langle Tv_n,v_n\rangle +
\frac{1}{an}\langle Tz_n,z_n\rangle\Bigr|\\
\ge&
|\langle Tv_n,v_n\rangle|-\frac{1}{an}-
\frac{1}{an}
\\
\ge&
D-\frac{2}{a}\ge \frac{D}{2}=d.
\end{align*}
So $u_n$ satisfies \eqref{(1)}.

Estimating the non-diagonal term $\langle Tu_j,u_n\rangle$ for $j=1,\dots,n-1$ and $n\ge 2$, we have by \eqref{(3)} and \eqref{z2}:
\begin{align*}
|\langle Tu_j,u_n\rangle|
=&
\frac{1}{\sqrt{an}}|\langle Tu_j,z_n\rangle|=
\frac{1}{\sqrt{an}}\bigl|\langle (I-P_{n-1})Tu_j,z_n\rangle\bigr|\\
\ge&
\frac{1}{\sqrt{an}}\cdot\frac{1}{2\sqrt{n}}\cdot\|(I-P_{n-1})Tu_j\|\\
\ge&
\frac{1}{2n\sqrt{a}}\cdot\frac{Cj^{1/2}}{\sqrt{2(n-1)}}.
\end{align*}
So
$$
|\langle Tu_j,u_n\rangle|\ge\frac{Cj^{1/2}}{3\sqrt{a}n^{3/2}}=\frac{c_1j^{1/2}}{n^{3/2}}.
$$
Obviously
$$
|\langle Tu_j,u_n\rangle|=\frac{1}{\sqrt{an}}|\langle Tu_j,z_n\rangle|\le
\frac{1}{\sqrt{an}}=
\frac{c_2}{n^{1/2}}.
$$
The inequalities
$$
\frac{c_1j^{1/2}}{n^{3/2}}\le
|\langle T^*u_j,u_n\rangle|=
|\langle Tu_n,u_j\rangle|\le
\frac{c_2}{n^{1/2}}
$$
for $j=1,\dots,n-1$ can be proved analogously, using \eqref{(4)} and \eqref{z3}. So \eqref{(2)} holds for all $1 \le j,k \le n$
such that $j\neq k$.

To prove \eqref{(3)} and \eqref{(4)} for $1 \le j,k \le n,$ let first $1 \le j \le n-1$. In view of \eqref{(3)} and \eqref{z2}, we have
\begin{align*}
\|(I-P_n)Tu_j\|^2=&
\bigl\|(I-P_{n-1})Tu_j\bigr\|^2-\bigl|\langle (I-P_{n-1})Tu_j,u_n\rangle\bigr|^2\\
=&
\|(I-P_{n-1})Tu_j\|^2-\frac{1}{an}\bigl|\langle (I-P_{n-1})Tu_j,z_n\rangle\bigr|^2\\
\ge&
\|(I-P_{n-1})Tu_j\|^2\Bigl(1-\frac{1}{an}\Bigr)\ge
\frac{C^2j}{2(n-1)}\cdot\frac{an-1}{an}\\
\ge&
\frac{C^2j}{2(n-1)}\cdot\frac{n-1}{n}=
\frac{C^2j}{2n}.
\end{align*}
Similarly, by \eqref{(4)} and \eqref{z3}, 
$$
\|(I-P_n)T^*u_j\|^2=
\bigl\|(I-P_{n-1})T^*u_j\bigr\|^2-\bigl|\langle (I-P_{n-1})T^*u_j,u_n\rangle\bigr|^2\ge
\frac{C^2j}{2n}
$$
for all $j=1,\dots,n-1$.

 Let now $j=n$ and estimate
$$
\|(I-P_n)Tu_n\|^2=
\|(I-P_{n-1})Tu_n\|^2-|\langle (I-P_{n-1})Tu_n,u_n\rangle|^2.
$$
We have
\begin{align*}
&\|(I-P_{n-1})Tu_n\|^2\\
=&
\Bigl\|\sqrt{1-\frac{1}{an}}Tv_n+\frac{1}{\sqrt{an}}(I-P_{n-1})Tz_n\Bigr\|^2\\
\ge&
\Bigl(1-\frac{1}{an}\Bigr)\|Tv_n\|^2-
\frac{2\sqrt{1-\frac{1}{an}}}{\sqrt{an}}\|Tv_n\|\cdot\|Tz_n\|+\frac{1}{an}\|(I-P_{n-1})Tz_n\|^2\\
\ge&
\|Tv_n\|^2-\frac{1}{an}-\frac{2}{\sqrt{an}}
\\
\ge&\|Tv_n\|^2-\frac{3}{\sqrt{a}}
\end{align*}
and
\begin{align*}
&\bigl|\langle (I-P_{n-1})Tu_n,u_n\rangle\bigr|=
|\langle Tu_n,u_n\rangle|\\
=&
\Bigl| \Bigl(1-\frac{1}{an}\Bigr)\langle Tv_n,v_n\rangle+
\frac{\sqrt{1-\frac{1}{an}}}{\sqrt{an}}\Bigl(\langle Tv_n,z_n\rangle+\langle Tz_n,v_n\rangle\Bigr)+
\frac{1}{an}\langle Tz_n,z_n\rangle\Bigr|\\
\le&
|\langle Tv_n,v_n\rangle|+\frac{1}{an}+\frac{2}{\sqrt{an}}+\frac{1}{an}\\
\le&
|\langle Tv_n,v_n\rangle| +\frac{4}{\sqrt{a}}.
\end{align*}
So,
in view of $\|Tv_n -\langle Tv_n,v_n\rangle v_n\|^2 \ge C^2,$ 
\begin{align*}
\|(I-P_n)Tu_n\|^2=&
\|(I-P_{n-1})Tu_n\|^2-\bigl|\langle (I-P_{n-1})T u_n,u_n\rangle\bigr|^2\\
\ge&
\|Tv_n\|^2-\frac{3}{\sqrt{a}}-\Bigl(|\langle Tv_n,v_n\rangle|+\frac{4}{\sqrt{a}}\Bigr)^2\\
\ge&
\bigl(\|Tv_n\|^2-|\langle Tv_n,v_n\rangle|^2\bigr)-\frac{3}{\sqrt{a}}-\frac{8}{\sqrt{a}}-\frac{16}{a}\\
\ge&
\|Tv_n -\langle Tv_n,v_n\rangle v_n\|^2-\frac{27}{\sqrt{a}}\\
\ge& \frac{C^2}{2}.
\end{align*}

The inequality
$$
\|(I-P_n)T^*u_n\|^2\ge\frac{C^2}{2}
$$
can be proved similarly. So $u_n$ satisfies \eqref{(3)} and \eqref{(4)} for all $1 \le j, k \le n.$

Given \eqref{(5)}, it remains to prove that in this case
\begin{equation}\label{(6)}
\|(I-P_{n})y_m\|^2\le \prod_{j\le n\atop m(j)=m} \Bigl(1-\frac{1}{2aj}\Bigr), \qquad m \ge 1.
\end{equation}
Taking into account that 
$
\|(I-P_n)y_m\|\le \|(I-P_{n-1})y_m\|,
$
$n\in \mathbb N,$
 it suffices to assume that $m=m(n).$ Then by \eqref{z1}, we have
\begin{align*}
\|(I-P_n)y_{m(n)}\|^2=&
\|(I-P_{n-1})y_{m(n)}\|^2-\bigl|\langle (I-P_{n-1})y_{m(n)},u_n\rangle\bigr|^2\\
=&\|(I-P_{n-1})y_{m(n)}\|^2-\frac{1}{an}\bigl|\langle (I-P_{n-1})y_{m(n)},z_n\rangle\bigr|^2\\
\le&
\|(I-P_{n-1})y_{m(n)}\|^2\Bigl(1-\frac{1}{2an}\Bigr),
\end{align*}
which yields \eqref{(6)}.

So the vectors $u_1,\dots, u_n$ satisfy \eqref{(1)}--\eqref{(5)}.

If we continue the construction inductively, we obtain an orthonormal system $(u_n)_{n=1}^\infty$ satisfying \eqref{11} and \eqref{12}. It remains to show that $(u_n)_{n=1}^\infty$ is a basis.
Let $m\ge 0$ be fixed. We have
\begin{align*}
\lim_{n\to\infty}\ln\|(I-P_n)y_m\|^2\le&
\lim_{n\to\infty}\sum_{j\le n\atop m(j)=m} \ln\Bigl(1-\frac{1}{2aj}\Bigr)\\
=&\lim_{k\to\infty}\sum_{j=1}^k\ln\Bigl(1-\frac{1}{2a\cdot2^m(2j-1)}\Bigr)\\
\le&
-\sum_{j=1}^\infty\frac{1}{a\cdot2^{m+1}(2j-1)}=-\infty.
\end{align*}
Hence
$$
\lim_{n\to\infty}\|(I-P_n)y_m\|^2=0
$$
and $y_m\in\bigvee_{j=1}^\infty u_j$. Since $\bigvee_{m=0}^\infty y_m=H$, we conclude that
$(u_n)_{n=1}^\infty$ is an orthonormal basis.
This finishes the proof.
$\hfill \Box$

\section{Final remarks}\label{final}
Note that Theorems \ref{band}, \ref{three_diag} and \ref{stoutt} have their counterparts for tuples
of bounded linear operators $\cT=(T_1, \dots, T_m) \in B(H)^m,$ $m \in \mathbb N.$ Moreover, their versions for tuples of selfadjoint operators
can be formulated under more general assumptions by replacing   the interior of the essential numerical range  $W_e(\cT)$ with its appropriate relative interior.
We have decided to present their single operator versions to simplify the presentation and to illustrate
the method rather than its fine technicalities.

To give a flavor of results one can obtain on this way,
recall that for $\cT \in B(H)^m$ the essential numerical range $W_e(\cT)$ of $\cT$
is defined as the set of all $m$-tuples $\alpha=(\alpha_1,\dots,\alpha_m)\in\CC^m$ such that there exists an orthonormal sequence (or, equivalently, basis) 
$(u_n)_{n=1}^\infty$ in $H$ satisfying
$$
\lim_{n\to\infty}\langle T_k u_n,u_n\rangle=\alpha_k
$$
for all $k=1,\dots,m$. 
This definition is completely analogous to the one given in Section \ref{results} for $m=1.$
As for $T \in B(H)$,  for every  $\cT \in B(H)^m$ the essential numerical range set $W_e(\cT)$ is nonempty, compact and convex.
For the theory of essential numerical ranges of operator tuples, one may consult e.g. \cite{Li-Poon}, \cite{MT_LMS}, \cite{Muller_Studia}
and \cite{MT_surv}.

Arguing as in the case $m=1$, cf. \cite{MT} for a ``tuple argument'', one gets the following statement.
\begin{theorem}\label{tuples}
Let $\cT\in B(H)^m$,  let $(\la_n)_{n=1}^\infty \subset\Int W_e(\cT)$ be such that $\sum_{n=1}^\infty\dist\{\la_n,\partial W_e(\cT)\}=\infty$,
and let $K\in \mathbb N$ be fixed. 
Then  there exists an orthonormal basis $(u_n)_{n=1}^\infty \subset H$ such that
\begin{equation*}
\langle \cT u_n,u_n\rangle=\la_n,  \qquad n\in\NN,
\end{equation*}
and
\begin{equation*}
\langle \cT u_n, u_j\rangle=0, \qquad 1\le |n-j|\le K.
\end{equation*}
\end{theorem}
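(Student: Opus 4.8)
The plan is to run the proof of Theorem~\ref{band} essentially verbatim, reading $\cT=(T_1,\dots,T_m)$ in place of the single operator $T$, replacing the scalar $\langle Tx,x\rangle$ by the point $\langle\cT x,x\rangle:=(\langle T_1x,x\rangle,\dots,\langle T_mx,x\rangle)\in\CC^m$, and interpreting all distances and the diameter in $\CC^m\cong\RR^{2m}$. The only ingredient that must be lifted from the scalar to the tuple setting is the analogue of \eqref{inc_w}: if $\la\in\Int W_e(\cT)$ and $M\subset H$ has finite codimension, then some unit vector $x\in M$ satisfies $\langle\cT x,x\rangle=\la$. As in the case $m=1$ (see \cite{MT}, \cite{MT_LMS}), this follows from the fact that $\la\in\Int W_e(\cT)$ yields an infinite-rank orthogonal projection $P$ with $PT_kP=\la_kP$ for all $k$; since $\operatorname{ran}P$ is infinite-dimensional it meets $M$, and any unit vector of $M\cap\operatorname{ran}P$ works. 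Here one uses convexity of $W_e(\cT)$, not of the joint numerical range $W(\cT)$, which need not be convex for $m\ge2$.

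Granting this, fix a sequence $(y_m)_{m=1}^\infty$ of unit vectors with $\bigvee_m y_m=H$; split $\NN=\bigcup_{r=0}^{K}B_r$ into residue classes modulo $K+1$ and choose $r_0$ with $\sum_{n\in B_{r_0}}\dist\{\la_n,\partial W_e(\cT)\}=\infty$; then partition $B_{r_0}=\bigcup_{m\ge1}A_m$ disjointly so that this series still diverges over each $A_m$, and for $n\in B_{r_0}$ let $m(n)$ be given by $n\in A_{m(n)}$. Build $u_n$ inductively. If $n\notin B_{r_0}$, put $\hat n=\min\{n'\in B_{r_0}:n'>n\}$ and choose a unit $u_n$ orthogonal to $\{u_j,T_ku_j,T_k^*u_j:1\le j<n,\ 1\le k\le m\}$ and to $\{y_{m(\hat n)},T_ky_{m(\hat n)},T_k^*y_{m(\hat n)}:k\}$ with $\langle\cT u_n,u_n\rangle=\la_n$, via the tuple form of \eqref{inc_w}. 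If $n\in B_{r_0}$ and $y_{m(n)}\notin M_{n-1}:=\bigvee_{j<n}u_j$, set $b_n=(I-P_{n-1})y_{m(n)}/\|(I-P_{n-1})y_{m(n)}\|$, $\rho_n=\|\langle\cT b_n,b_n\rangle-\la_n\|$, $\de_n=\tfrac12\dist\{\la_n,\partial W_e(\cT)\}$, and, when $\rho_n>0$, $\mu_n=\la_n-\tfrac{\de_n}{\rho_n}\bigl(\langle\cT b_n,b_n\rangle-\la_n\bigr)$ (and $\mu_n=\la_n$ if $\rho_n=0$); since $\|\mu_n-\la_n\|=\de_n$ and the open ball of radius $2\de_n$ about $\la_n$ lies in $\Int W_e(\cT)$, we get $\mu_n\in\Int W_e(\cT)$, so there is a unit $v_n$ orthogonal to $\{u_j,T_ku_j,T_k^*u_j:j<n,\ k\}$, to $\{b_n,T_kb_n,T_k^*b_n:k\}$, and to the $y_{m(n)}$-block, with $\langle\cT v_n,v_n\rangle=\mu_n$; put $u_n=\sqrt{\rho_n/(\rho_n+\de_n)}\,v_n+\sqrt{\de_n/(\rho_n+\de_n)}\,b_n$. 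Orthogonality of $v_n$ to $b_n,T_kb_n,T_k^*b_n$ kills the cross terms, and the defining relation for $\mu_n$ gives, componentwise, $\langle T_ku_n,u_n\rangle=(\la_n)_k$; the remaining case $y_{m(n)}\in M_{n-1}$ is treated like $n\notin B_{r_0}$.

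The band identities follow exactly as in Theorem~\ref{band}. For $n\notin B_{r_0}$ they are immediate from $u_n\perp T_ku_j,T_k^*u_j$ for $j<n$. For $n\in B_{r_0}$ and $\max\{1,n-K\}\le j\le n-1$ one uses that $j,j+1,\dots,n-1\notin B_{r_0}$, so each such $u_i$ was built orthogonal to the $y_{m(n)}$-block; hence $P_{n-1}y_{m(n)}=P_{j-1}y_{m(n)}$ and $b_n\in\bigvee\{u_1,\dots,u_{j-1},y_{m(n)}\}$, a span to which every $T_ku_j$ and $T_k^*u_j$ is orthogonal by construction of $u_j$, so $\langle T_ku_j,u_n\rangle=\langle T_ku_n,u_j\rangle=\langle u_n,T_k^*u_j\rangle=0$ for all $k$. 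The basis property follows from $\|(I-P_n)y_{m(n)}\|^2=\|(I-P_{n-1})y_{m(n)}\|^2\bigl(1-\tfrac{\de_n}{\rho_n+\de_n}\bigr)$ for $n\in B_{r_0}$, the monotonicity $\|(I-P_n)y_m\|\le\|(I-P_{n-1})y_m\|$, and the uniform bounds $\rho_n\le2\bigl(\sum_k\|T_k\|^2\bigr)^{1/2}$, $\de_n\le\tfrac12\diam W_e(\cT)$, which give $\tfrac{\de_n}{\rho_n+\de_n}\ge c\,\dist\{\la_n,\partial W_e(\cT)\}$ for a constant $c>0$ depending only on $\cT$; summing $-\ln(1-\cdot)$ over $n\in A_m$ and invoking divergence of $\sum_{n\in A_m}\dist\{\la_n,\partial W_e(\cT)\}$ forces $\|(I-P_n)y_m\|\to0$, whence $y_m\in\bigvee_nu_n$ for every $m$, so $(u_n)_{n=1}^\infty$ is the desired basis.

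The main obstacle is organisational rather than conceptual: one must keep the finitely many orthogonality constraints imposed at step $n$ mutually compatible with prescribing $\langle\cT u_n,u_n\rangle$, and reconcile the residue-class splitting (responsible for the band \eqref{nondiag}) with the $A_m$-splitting (responsible for completeness) — which, as in Theorem~\ref{band}, is achieved by perturbing towards $y_{m(n)}$ only at indices $n\in B_{r_0}$, spaced $K+1$ apart. The single genuinely non-cosmetic point in passing from $m=1$ to general $m$ is that $\langle\cT b_n,b_n\rangle$ need not lie in $W_e(\cT)$; this is why $\mu_n$ is defined by moving from $\la_n$ straight away from $\langle\cT b_n,b_n\rangle$, a distance $\de_n$ within the ball about $\la_n$ that sits inside $\Int W_e(\cT)$, and, together with the tuple form of \eqref{inc_w}, this is all that the multivariable setting requires.
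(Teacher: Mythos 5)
Your proposal is correct and follows exactly the route the paper intends: the paper gives no separate proof of Theorem~\ref{tuples} beyond the remark ``arguing as in the case $m=1$,'' and your adaptation of the proof of Theorem~\ref{band} — with the tuple form of \eqref{inc_w} obtained from infinite-rank $\la$-compressions rather than from convexity of the joint numerical range, and with $\mu_n$ defined by moving from $\la_n$ away from $\langle\cT b_n,b_n\rangle$ a distance $\de_n$ inside $\Int W_e(\cT)$ — is precisely the required argument. No gaps.
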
 
The formulation of a tuples analogue of Theorem \ref{stoutt} and the proof of that analogue are also direct adaptations of their
single operator versions.
\begin{theorem}\label{stout_t}
Let $\cT=(T_1,\dots,T_m)\in B(H)^m.$ Then the following statements are equivalent.
\begin{itemize}
\item [(i)]  $0\in W_e(\cT);$
\item [(ii)] For any  $(a_n)_{n=1}^\infty \subset (0,\infty)$  satisfying  $(a_n)_{n=1}^\infty\not \in \ell^1(\mathbb N)$ there exists an orthonormal basis $(u_n)_{n=1}^\infty$ in $H$ such that
\begin{equation*}
|\langle T_k u_n,u_j\rangle|\le \sqrt{a_n a_j}
\end{equation*}
for all $n,j\in\NN$ and $k=1,\dots,m$.
\end{itemize}
\end{theorem}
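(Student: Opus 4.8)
The plan is to run the proof of Theorem~\ref{stoutt} essentially verbatim, with the single operator $T$ (and $T^*$) replaced everywhere by the family $T_1,\dots,T_m$ (and $T_1^*,\dots,T_m^*$). The implication (ii)$\Rightarrow$(i) is immediate: the hypothesis in (ii), applied to some non-summable $(a_n)$ with $a_n\to 0$, produces an orthonormal basis along which $\langle T_k u_n,u_n\rangle\to 0$ simultaneously for all $k$, which is one of the equivalent descriptions of $0\in W_e(\cT)$. So the content lies in (i)$\Rightarrow$(ii).

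The one ingredient to isolate at the outset is the finite-codimension form of the definition of $W_e(\cT)$: $0\in W_e(\cT)$ if and only if for every $\e>0$ and every subspace $M\subset H$ of finite codimension there is a unit vector $x\in M$ with $|\langle T_k x,x\rangle|<\e$ for all $k=1,\dots,m$. This is the tuple counterpart of the characterization recalled in Section~\ref{results} and is available from \cite{Li-Poon} and \cite[Section~5]{MT_LMS}. Note that, unlike in Theorems~\ref{band} and \ref{three_diag}, only this approximate version is needed here, never the interior of $W_e(\cT)$.

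Granting this, I would reproduce the construction from the proof of Theorem~\ref{stoutt}: reduce to $\|T_k\|\le 1$ for all $k$ by a coordinatewise rescaling; apply Lemma~\ref{lone} to obtain $(a'_n)_{n=1}^\infty$ with $0<a'_n\le\min\{1,a_n\}$, $(a'_n)\notin\ell^1$ and $a'_n/a_n\to 0$; fix unit vectors $(y_m)_{m=1}^\infty$ with $\bigvee_m y_m=H$; and choose the mutually disjoint sets $A_m$ with $\sum_{n\in A_m}a'_n=\infty$, the index function $m(n)$, and the threshold $d(n)$ exactly as there, so that $m(n)\le n\le d(n)$. In the inductive step one uses the characterization of the previous paragraph to pick a unit vector $v_n$ orthogonal to $\{u_j,T_k u_j,T_k^* u_j:\,1\le j\le n-1,\,1\le k\le m\}$ and to $\{y_j,T_k y_j,T_k^* y_j:\,1\le j\le d(n),\,1\le k\le m\}$ with $|\langle T_k v_n,v_n\rangle|<a'_n/2$ for all $k$, and then sets $u_n=\sqrt{1-c_n^2}\,v_n+c_n w_n$ with $c_n=\sqrt{a'_n}/2$ and $w_n$ the unit vector along $(I-P_{n-1})y_{m(n)}$ (or $u_n=v_n$ when $y_{m(n)}\in M_{n-1}$).

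The verification of $|\langle T_k u_n,u_j\rangle|\le\sqrt{a_n a_j}$ is then carried out for each fixed $k\in\{1,\dots,m\}$ exactly as in the proof of Theorem~\ref{stoutt}: the diagonal bound $|\langle T_k u_n,u_n\rangle|\le a'_n/2+c_n^2<a_n$, the case $n\ge d(j)$ giving $|\langle T_k u_j,u_n\rangle|\le c_n\le\tfrac12\sqrt{a_n a_j}$, and the case $n<d(j)$, where one again works with the subspace $L$ spanned by $v_1,\dots,v_{n-1},y_{m(1)},\dots,y_{m(n)}$, uses that $v_1,\dots,v_{n-1}$ are mutually orthogonal and that the choice of $v_j$ forces $v_j\perp T_k(L\cap\{v_j\}^\perp)$ and $v_j\perp T_k^*(L\cap\{v_j\}^\perp)$, whence $|\langle T_k v_j,w_n\rangle|\le|\langle T_k v_j,v_j\rangle|\le a'_j/2$ and $|\langle T_k u_j,u_n\rangle|<\sqrt{a'_n a'_j}\le\sqrt{a_n a_j}$; the bound on $|\langle T_k u_n,u_j\rangle|=|\langle T_k^* u_j,u_n\rangle|$ is symmetric. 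The proof that $(u_n)$ is an orthonormal basis is literally the one for Theorem~\ref{stoutt}, since it involves only the $c_n$, the $w_n$, and the telescoping product $\prod(1-a'_j/4)$, and not the operators at all. I do not expect a genuine obstacle: this is a bookkeeping extension in which the orthogonality lists range over all $m$ operators and every scalar estimate is applied coordinatewise; the only point requiring real care is quoting the correct tuple version of the essential numerical range characterization invoked above.
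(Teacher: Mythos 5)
Your proposal is correct and is exactly what the paper intends: the paper gives no separate proof of Theorem~\ref{stout_t}, stating only that it is a ``direct adaptation'' of the proof of Theorem~\ref{stoutt}, and your write-up carries out precisely that adaptation, correctly isolating the one genuinely new ingredient (the finite-codimension characterization of $0\in W_e(\cT)$ for tuples) and noting that the basis argument is operator-free. The only nitpick is the normalization: rather than a ``coordinatewise'' rescaling, one should divide all $T_k$ by a single constant $c=\max(1,\max_k\|T_k\|)$ and apply the result to the sequence $(a_n/c)$, which preserves both $0\in W_e(\cT)$ and the desired bound.
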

Theorem \ref{stout_t} implies, in particular, the following curious corollary. If $T \in B(H)$ and $T$ is boundedly invertible, then usually
$T^{-1}$ is ``large'' (in a sense) if $T$ is ``small'', and vice versa. However, sometimes one has a good control on both $T$ and $T^{-1}$
as in the result given below. 
Recall that the joint essential spectrum
$\sigma_e(\cT)$ for a commuting $\cT=(T_1, \dots, T_m) \in B(H)^m$ can be defined as the (Harte) spectrum of the $m$-tuple 
$(T_1 + K(H), . . . , T_m + K(H))$ in the
Calkin algebra $B(H)/K(H),$ where $K(H)$ denotes the ideal of all compact operators
on $H.$ It is well known, see e.g. \cite[p.123]{Dash}, that $\alpha=(\alpha_1, \dots, \alpha_m) \in \mathbb C^m$ belongs to $\sigma_e(\cT)$
if and only if there exists an orthonormal sequence $(u_n)_{n=1}^\infty \in H$ such that
either $\|T_k u_n - \alpha_k u_n\|\to 0,$ $n \to \infty,$ for every $1\le k \le m,$  or $\|T^*_k u_n - \alpha_k u_n\|\to 0,$
$n \to \infty,$ $ 1 \le k \le m.$ 
\begin{theorem}\label{inverse}
Let $T \in B(H)$ be such that $0 \not \in \sigma (T),$ and 
there exists a nonzero $z \in \mathbb C$ such that $\{z,-z\} \subset \sigma_e(T).$
Then for any  $(a_n)_{n=1}^\infty \subset (0,\infty)$  satisfying  $(a_n)_{n=1}^\infty\not \in \ell^1(\mathbb N)$ there exists an orthonormal basis $(u_n)_{n=1}^\infty$ in $H$ such that
\begin{equation*}
|\langle T u_n,u_j\rangle|\le \sqrt{a_n a_j} \qquad \text{and} \qquad |\langle T^{-1} u_n,u_j\rangle|\le \sqrt{a_n a_j}
\end{equation*}
for all $n,j\in\NN$.
\end{theorem}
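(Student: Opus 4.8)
The plan is to reduce everything to the tuple form of the generalized Stout theorem. Since $0\notin\sigma(T)$, the operator $T$ is boundedly invertible and $T^{-1}\in B(H)$, so the pair $\cT:=(T,T^{-1})\in B(H)^2$ is well defined. If we can show that $(0,0)\in W_e(\cT)$, then Theorem~\ref{stout_t} applied to $\cT$ (with $m=2$) yields, for every $(a_n)_{n=1}^\infty\subset(0,\infty)$ with $(a_n)_{n=1}^\infty\notin\ell^1(\NN)$, an orthonormal basis $(u_n)_{n=1}^\infty$ of $H$ such that $|\langle Tu_n,u_j\rangle|\le\sqrt{a_na_j}$ and $|\langle T^{-1}u_n,u_j\rangle|\le\sqrt{a_na_j}$ for all $n,j\in\NN$, which is precisely the assertion of the theorem. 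So the whole task is to verify $(0,0)\in W_e(\cT)$.

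First I would check that $(z,z^{-1})\in W_e(\cT)$. Since $z\in\sigma_e(T)$, there is an orthonormal sequence $(x_n)_{n=1}^\infty$ in $H$ with either $\|Tx_n-zx_n\|\to0$ or $\|T^*x_n-\bar z x_n\|\to0$ (the standard description of the essential spectrum of a single operator, a special case of the tuple description recalled above). In the first case $\langle Tx_n,x_n\rangle\to z$, and applying the bounded operator $T^{-1}$ to $Tx_n-zx_n\to0$ gives $\|x_n-zT^{-1}x_n\|\to0$, i.e.\ $\|T^{-1}x_n-z^{-1}x_n\|\to0$ and hence $\langle T^{-1}x_n,x_n\rangle\to z^{-1}$. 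In the second case $\langle T^*x_n,x_n\rangle\to\bar z$, so $\langle Tx_n,x_n\rangle=\overline{\langle T^*x_n,x_n\rangle}\to z$; moreover $T^*$ is invertible with $(T^*)^{-1}=(T^{-1})^*$, so applying $(T^*)^{-1}$ to $T^*x_n-\bar z x_n\to0$ gives $\|(T^{-1})^*x_n-\bar z^{-1}x_n\|\to0$, whence $\langle T^{-1}x_n,x_n\rangle=\overline{\langle(T^{-1})^*x_n,x_n\rangle}\to z^{-1}$. In either case $(x_n)$ witnesses, by the definition of $W_e(\cT)$, that $(z,z^{-1})\in W_e(\cT)$. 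Replacing $z$ by $-z$ (which also lies in $\sigma_e(T)$) the same argument gives $(-z,-z^{-1})\in W_e(\cT)$.

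Since $W_e(\cT)$ is convex, it contains the midpoint $\frac12\big[(z,z^{-1})+(-z,-z^{-1})\big]=(0,0)$, and the proof is finished by invoking Theorem~\ref{stout_t} as above.

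The argument is short; the only point demanding a little care is the passage from an approximate eigenvector of $T$ (respectively $T^*$) at $z$ to an approximate eigenvector of $T^{-1}$ (respectively $(T^{-1})^*$) at $z^{-1}$, where the hypothesis $0\notin\sigma(T)$ enters through the boundedness of $T^{-1}$, together with the small amount of conjugate bookkeeping in the $T^*$ case. There is no serious obstacle once one hits on the idea of forming the tuple $(T,T^{-1})$; all the real work is delegated to Theorem~\ref{stout_t}.
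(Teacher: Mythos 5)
Your proposal is correct and follows essentially the same route as the paper: form the pair $(T,T^{-1})$, use the approximate-eigenvector description of $\sigma_e(T)$ at $\pm z$ together with boundedness of $T^{-1}$ to get $(\pm z,\pm z^{-1})\in W_e(T,T^{-1})$, deduce $(0,0)\in W_e(T,T^{-1})$ by convexity, and invoke Theorem~\ref{stout_t} with $m=2$. The only cosmetic difference is that you verify membership in $W_e(T,T^{-1})$ directly from the diagonal entries, while the paper passes through the inclusion $\sigma_e(T,T^{-1})\subset W_e(T,T^{-1})$; your conjugate bookkeeping in the $T^*$ case is also the careful version of what the paper states.
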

 \begin{proof}

If $z \in \sigma_e(T),$
then there exists an orthonormal sequence $(u_n)_{n=1}^\infty$
such that either $\|T u_n -z u_n\| \to 0$ or $\|T^* u_n - z u_n \| \to 0, n \to\infty.$
Hence, either   $\|T^{-1} u_n -z^{-1} u_n\|=\|(z T)^{-1}(Tu_n -z u_n) \| \to 0,$
or $\|(T^*)^{-1} u_n - z^{-1} u_n \| \to 0, n \to \infty,$ respectively. 
This implies that $$(z,z^{-1})\in\sigma_e(T,T^{-1})\subset
 W_e(T,T^{-1}).$$
 Similarly,
 $(-z,-z^{-1})\in W_e(T,T^{-1})$. So, by convexity of $W_e(T,T^{-1})$, we have
$(0,0)\in W_e(T,T^{-1})$.
Now the statement follows from Theorem \ref{stout_t}.
\end{proof}
\begin{remark}
Note that under the theorem's assumptions, \cite[Corollary 30.11]{Muller} directly implies that
$\sigma_e(\cT)=\{(z, z^{-1}): z \in \sigma_e(T)\},$ but we preferred to not use a heavy machinery from
\cite{Muller} here.
\end{remark}
If $T\in B(H)$ is selfadjoint, then $W_e(T) \subset \mathbb R$ and the assumptions of Theorem \ref{tuples} never hold for $T$.
However, by considering the interior $\Int W_{e, \mathbb R}(T)$  of $W_e(T)$ relative to $\mathbb R$ one can prove the next statement
similar to Theorem \ref{tuples} by essentially the same argument.
 \begin{theorem}\label{tuples1}
Let $T \in B(H)$ be selfadjoint,  let $(\la)_{n=1}^\infty \subset\Int W_{e, \mathbb R}(T)$ be such that $\sum_{n=1}^\infty\dist\{\la_n,\partial W_{e, \mathbb R}(\cT)\}=\infty$,
and let $K\in \mathbb N$ be fixed. 
Then  there exists an orthonormal basis $(u_n)_{n=1}^\infty \subset H$ such that
\begin{equation*}
\langle T u_n,u_n\rangle=\la_n,  \qquad n\in\NN,
\end{equation*}
and
\begin{equation*}
\langle T u_n, u_j\rangle=0, \qquad 1\le |n-j|\le K.
\end{equation*}
\end{theorem}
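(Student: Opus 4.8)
The plan is to transcribe the inductive scheme from the proof of Theorem \ref{band}, replacing the two–dimensional input \eqref{inc_w} by its one–dimensional analogue valid for selfadjoint operators. Write $W_{e,\RR}(T)=[m,M]$ with $m=\min\sigma_e(T)$ and $M=\max\sigma_e(T)$, so that $\Int W_{e,\RR}(T)=(m,M)$ and $\partial W_{e,\RR}(T)=\{m,M\}$; the hypotheses on $(\la_n)$ force $m<M$. The one ingredient to establish first is: for every subspace $N\subset H$ of finite codimension and every $\la\in(m,M)$ there is a unit vector $x\in N$ with $\langle Tx,x\rangle=\la$. Indeed, if $P$ is the finite-rank projection onto $N^\perp$, then $S:=(I-P)T|_{(I-P)H}$ is selfadjoint and $W_e(S)=W_e(T)=[m,M]\subseteq\overline{W(S)}$; since $W(S)$ is a real interval, all of its interior points are attained, whence $(m,M)\subseteq W(S)$, which is the claim (transported back to $N$).

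Next I would run the construction of Theorem \ref{band} essentially word for word. Fix $(y_m)_{m=1}^\infty$ with $\bigvee_m y_m=H$, split $\NN$ into the residue classes $B_r$ modulo $K+1$, pick $r_0$ with $\sum_{n\in B_{r_0}}\dist\{\la_n,\{m,M\}\}=\infty$, and further split $B_{r_0}=\bigcup_m A_m$ with the series diverging over each $A_m$. For $n\notin B_{r_0}$, using the fact recorded above, choose a unit $u_n$ orthogonal to all $u_j,Tu_j$ with $j<n$ and to $y_{m(\hat n)},Ty_{m(\hat n)}$ (with $\hat n$ the least element of $B_{r_0}$ exceeding $n$) satisfying $\langle Tu_n,u_n\rangle=\la_n$; note that selfadjointness halves the list of vectors to avoid. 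For $n\in B_{r_0}$ with $y_{m(n)}\notin M_{n-1}$, let $b_n$ be the normalized projection of $y_{m(n)}$ onto $M_{n-1}^\perp$, set $\rho_n=|\langle Tb_n,b_n\rangle-\la_n|$ and $\de_n=\tfrac12\dist\{\la_n,\{m,M\}\}$, and apply the correction step: since $\langle Tb_n,b_n\rangle\in\RR$, the auxiliary value $\mu_n$ of that step equals $\la_n\pm\de_n$, which lies in $(m,M)$ precisely because $\de_n<\min\{\la_n-m,M-\la_n\}$; choose a unit $v_n\perp\{u_j,Tu_j:j<n\}\cup\{b_n,Tb_n\}$ with $\langle Tv_n,v_n\rangle=\mu_n$ and put $u_n=\sqrt{\rho_n/(\rho_n+\de_n)}\,v_n+\sqrt{\de_n/(\rho_n+\de_n)}\,b_n$ (and $u_n$ simply orthogonal to everything if $y_{m(n)}\in M_{n-1}$). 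The identities $\langle Tu_n,u_n\rangle=\la_n$ and $\langle Tu_n,u_j\rangle=\langle Tu_j,u_n\rangle=0$ for $1\le|n-j|\le K$ are verified exactly as in Theorem \ref{band} (again more simply, since $T^*=T$), and the Blaschke-type bookkeeping, namely $\ln\|(I-P_n)y_{m(n)}\|^2\le\ln\|(I-P_{n-1})y_{m(n)}\|^2-\dist\{\la_n,\{m,M\}\}/(6\|T\|)$ together with $\sum_{n\in A_m}\dist\{\la_n,\{m,M\}\}=\infty$, shows that $(u_n)$ is an orthonormal basis.

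The only genuinely new point — and hence the one to get right — is the one-dimensional replacement of \eqref{inc_w} in the first paragraph: a prescribed \emph{real} value in the open interval $(m,M)$ can be realised in the numerical range of every finite-codimensional restriction of a selfadjoint $T$. Once this is available, no obstacle of substance remains, since the correction mechanism automatically stays on the real line ($\langle Tb_n,b_n\rangle$ and $\mu_n$ are real), so only convexity of the segment $[m,M]$ is used, and the rest is a faithful transcription of the argument for Theorem \ref{band} (and, with the obvious changes, of Theorem \ref{tuples}).
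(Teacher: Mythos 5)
Your proposal is correct and follows essentially the same route as the paper, which itself only remarks that Theorem \ref{tuples1} is proved "by essentially the same argument" as Theorem \ref{band} with $\Int W_{e,\mathbb R}(T)$ in place of $\Int W_e(T)$. Your explicit verification of the one-dimensional replacement for \eqref{inc_w} (via $W_e(S)=W_e(T)=[m,M]\subseteq\overline{W(S)}$ and convexity of the real interval $W(S)$) is exactly the point that needs checking, and it is done correctly.
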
 
Since  $W_{e, \mathbb R}(T)=\conv \sigma_e (T)$ if $T$ is selfadjoint,
one has in fact $W_{e, \mathbb R}(T)=[\inf \sigma_e(T), \sup \sigma_e(T)],$ and the statement above can be recasted in spectral terms. 
Observe that the statement is not trivial even if $T$ is an operator of multiplication by independent variable on $L^2([0,1]).$

To deal with degenerate situations as above, instead of an $m$-tuple $\cT=(T_1, \dots, T_m)\in B(H)^m,$ we may consider a $2m$-tuple 
\[\widetilde \cT=(\Re T_1, \Im T_1, \dots, \Re T_m, \Im T_m) \] of selfadjoint operators $\Re T_i$ and $\Im T_i, 1 \le i \le m,$ 
and identify  $W_e(\cT)\subset \mathbb C^m$ with $W_e(\widetilde \cT)\subset \mathbb R^{2m}.$
So 
to formulate a result for $\tilde \cT$ similar to Theorem \ref{tuples1} (or to Theorem \ref{stout_t}), it is necessary  to replace  $\Int W_{e, \mathbb R}(T)$ by the interior
of   $W_e(\widetilde \cT)$ with respect to the smallest affine subspace of $\mathbb R^{2m}$ containing  $W(\widetilde \cT).$
Since this does not require any new arguments apart from comparatively simple linear algebra, we omit a detailed discussion
of that more general setting,
and refer the interested reader to \cite{MT}.


\begin{thebibliography}{100}

\bibitem{Anderson}
J. H. Anderson and J. G.  Stampfli, \emph{Commutators and compressions,}
Israel J. Math. \textbf{10} (1971), 433--441.

\bibitem{Arveson06} W. Arveson and R. V. Kadison, \emph{Diagonals of self-adjoint operators,} Operator Theory, Operator Algebras, and Applications, Contemp. Math., 414, 2006, AMS, Providence, RI, 247--263.

 \bibitem{Arveson_USA} W. Arveson, \emph{Diagonals of normal operators with finite spectrum,} Proc. Natl. Acad. Sci. USA 
\textbf{104} (2007), 1152--1158.

\bibitem{Ball}  K. M. Ball, \emph{The complex plank problem,} Bull. London Math. Soc. 
\textbf{33} (2001), 433--442. 

\bibitem{Ball_H} K. Ball, \emph{Convex geometry and functional analysis,} Handbook of the geometry of Banach spaces, Vol. I, North-Holland, Amsterdam, 2001, 161--104.

\bibitem{Bonsall}  F. F. Bonsall and J. Duncan, \emph{Numerical ranges,} II, LMS Lecture Notes Series, \textbf{10}, Cambridge University Press, New York-London, 1973.

\bibitem{B} J.-C. Bourin, \emph{Compressions and pinchings,} J. Operator Theory \textbf{50} (2003), 211--220.

\bibitem{Bownik} M. Bownik and J. Jasper, \emph{Characterization of sequences of frame norms,} J. Reine Angew. Math. \textbf{654} 
(2011), 219--244.

\bibitem{Bownik1} M. Bownik and J. Jasper, \emph{The Schur-Horn theorem for operators with finite spec-
trum,} Trans. Am. Math. Soc. \textbf{367} (2015), 5099--5140.

\bibitem{Brown} A. Brown and C. Pearcy, \emph{Structure of commutators of operators,}
 Ann. of Math.  \textbf{82} (1965), 112--127.

\bibitem{BrownO} 
N. P. Brown and N. Ozawa, \emph{
$C^*$-algebras and finite-dimensional approximations,}
Graduate Studies in Math., \textbf{88},  AMS, Providence, RI, 2008.

\bibitem{CMV} M. J. Cantero, L. Moral, and L. Vel\'azquez, \emph{
Five-diagonal matrices and zeros of orthogonal polynomials on the unit circle,}
Linear Algebra Appl. \textbf{362} (2003), 29--56.

\bibitem{CMV1} M. J. Cantero, L. Moral, and L. Vel\'azquez, \emph{
Minimal representations of unitary operators and orthogonal polynomials on the unit circle,}
Linear Algebra Appl. \textbf{408} (2005), 40--65.

\bibitem{Dash} A. T. Dash, \emph{
Joint essential spectra,}
Pacific J. Math. \textbf{64} (1976), 119--128.

\bibitem{Davidson} 
K. R. Davidson and A. Donsig, \emph{Norms of Schur multipliers,} 
Illinois J. Math. \textbf{51} (2007),  743--766.

\bibitem{Mendel} M. David, \emph{On a certain type of commutator,} J. Math. Mech. \textbf{19} (1970), 665--680.

 \bibitem{Mendel1} M. David, \emph{On a certain type of commutators of operators,} Israel J. Math. \textbf{9} (1971), 34--42.

\bibitem{DoJo07} D. \v Z. Dokovi\'c and C. R. Johnson, 
\emph{Unitarily achievable zero patterns and traces of words in $A$ and $A^*$,} Linear Algebra Appl. \textbf{421} (2007), 63--68.

\bibitem{Douglas} R. G. Douglas and C. Pearcy, \emph{A note on quasitriangular operators,} Duke Math. J. \textbf{37} (1970), 177--188. 

\bibitem{Fan84} P. Fan, \emph{On the diagonal of an operator,} 
Trans. Amer. Math. Soc. \textbf{283} (1984), 239--251.

\bibitem{Fan87} P. Fan, C.-K. Fong and D. Herrero, \emph{
On zero-diagonal operators and traces,}
Proc. Amer. Math. Soc. \textbf{99} (1987),  445--451.

\bibitem{Fillmore} P. A. Fillmore, J. G. Stampfli, and J. P. Williams, \emph{On the essential numerical range, the essential spectrum, and a problem of Halmos,} Acta Sci. Math. (Szeged) \textbf{33} (1972), 179--192.

\bibitem{FoRaRo87} C. K. Fong, H. Radjavi, and P. Rosenthal, \emph{Norms for matrices and operators,}
 J. Operator Theory \textbf{18} (1987),  99--113.

 \bibitem{FoWu93} C. K. Fong and P. Y. Wu, \emph{Diagonal operators: dilation, sum and product,}
 Acta Sci. Math. (Szeged) \textbf{57} (1993), 125--138.

\bibitem{FoWu96} C. K. Fong and P. Y. Wu, \emph{Band-diagonal operators,} Linear Algebra Appl. \textbf{248} (1996), 185--204. 

\bibitem{Halmos} P. Halmos, \emph{Bounded integral operators,} Abstract 752-47-6, Notices AMS. \textbf{25}, A-
124 (1978).

\bibitem{Halmos-book} P. Halmos, \emph{A Hilbert space problem book,} Second ed., Graduate Texts in Math, \textbf{19}, Springer, New York-Berlin, 1982. 

\bibitem{Halmos_S} P. Halmos and V. S. Sunder, \emph{Bounded integral operators on $L^2$ spaces,} Ergebnisse der Math., \textbf{96}, Springer, Berlin-New York, 1978.

\bibitem{He-Wo} D. Herrero and W. Wogen, \emph{On the multiplicity of $T\oplus T \oplus \dots \oplus T,$}
Rocky Mountain J. Math. \textbf{20} (1990),  445--466.

\bibitem{H} D. Herrero, \emph{The diagonal entries of a Hilbert space operator,} Rocky Mountain J. Math. \textbf{21} (1991), 857--864.

\bibitem{HoSc07} J.-P. Holbrook and Schoch, \emph{Moving zeros among matrices,} with an appendix by T. Ko\v sir and B. Al. Sethuraman, 
Linear Algebra Appl. \textbf{424} (2007), 83--95.
 
\bibitem{Jasper} J. Jasper, J. Loreaux, and G. Weiss, \emph{Thompson's theorem for compact operators and diagonals of unitary operators,} Indiana Univ. Math. J. \textbf{67} (2018),  1--27.
 
\bibitem{Kadison02a} R. V. Kadison, \emph{The Pythagorean Theorem I: the finite case,} 
Proc. Natl. Acad. Sci. USA
\textbf{99} (2002), 4178--4184.

\bibitem{Kadison02b} R. V. Kadison, \emph{The Pythagorean Theorem II: the infinite discrete case,} 
Proc. Natl. Acad. Sci. USA
\textbf{99} (2002), 5217--5222.

\bibitem{Kaftal} V. Kaftal and G. Weiss, \emph{
An infinite dimensional Schur-Horn theorem and majorization theory,}
J. Funct. Anal. \textbf{259} (2010),  3115--3162.

\bibitem{Kennedy}  M. Kennedy and P. Skoufranis, \emph{
The Schur-Horn problem for normal operators,}
Proc. Lond. Math. Soc. \textbf{111} (2015), 354--380.

\bibitem{Li-Poon} C.-K. Li and Y.-T. Poon, 
\emph{The joint essential numerical range of operators: convexity and related results,}
Studia Math. \textbf{194} (2009),  91--104.

\bibitem{LW20} J. Loreaux and G. Weiss, \emph{On diagonals of operators:
selfadjoint, normal and other classes,} Operator theory: themes and variations, Conf. Proc., Timisoara, 2016,
Theta Foundation, 2020, 193--214,  arXiv 1905.09987.

\bibitem{Lust} F. Lust-Piquard, \emph{On the coefficient problem: a version of the Kahane-Katznelson-de Leeuw theorem 
for spaces of matrices,} J. Funct. Anal. \textbf{149} (1997), 352--376.

\bibitem{Massey} P. Massey and M. Ravichandran, \emph{
Multivariable Schur-Horn theorems,}
Proc. Lond. Math. Soc. \textbf{112} (2016), 206--234.

\bibitem{MPT} A. V. Megretskiĭ, V. V. Peller, and S. R. Treil, 
\emph{The inverse spectral problem for self-adjoint Hankel operators,}
Acta Math. \textbf{174} (1995),  241--309.

\bibitem{Muller} V. M\" uller, \emph{Spectral theory of linear operators and spectral systems in Banach algebras,} Second ed., Operator Theory: Advances and Applications, \textbf{139}, Birkh\" auser, Basel, 2007.


\bibitem{Muller_Studia} V. M\" uller, \emph{The joint essential numerical range, compact perturbations, 
and the Olsen problem,} Studia Math. \textbf{197} (2010),  275--290. 

\bibitem{MT_JFA} V. M\"uller and Y. Tomilov, \emph{Circles in the spectrum and the geometry of orbits: A numerical ranges approach,} 
J. Funct. Anal. \textbf{274}(2018), 433--460.

\bibitem{MT_LMS} V. M\"uller and Y. Tomilov, \emph{Joint numerical ranges and compressions of powers of operators,} 
J. London Math. Soc. \textbf{99} (2019), 127--152.

\bibitem{MT} V. M\"uller and Y. Tomilov, \emph{Diagonals of operators and Blaschke's enigma,}
 Trans. Amer. Math. Soc. \textbf{372} (2019), 3565--3595.

\bibitem{MT_surv} V. M\"uller and Y. Tomilov, \emph{Joint numerical ranges: recent advances and applications,}
 Concr. Oper. \textbf{7} (2020),  133--154.

\bibitem{Pat}  S. Patnaik, S. Petrovic, and G.Weiss, \emph{Universal block tridiagonalization in $B(H)$ and beyond,} 
The mathematical legacy of Victor Lomonosov, de Gruyter, 2020, to appear. arXiv: 1905.00823.

\bibitem{RaRo} H. Radjavi and P. Rosenthal,
\emph{Matrices for operators and generators of $B(H)$,}
J. London Math. Soc.  \textbf{2} (1970), 557--560.

\bibitem{Salinas}  N. Salinas, \emph{On the $\eta$ function of Brown and Pearcy and the numerical function of an operator,} 
Canadian J. Math. \textbf{23} (1971), 565--578.

 \bibitem{Shulman} V. S. Shul'man, \emph{Multiplication operators and traces of commutators,} Investigations on linear operators and the theory of functions, XIII, Zap. Nauchn. Sem. Leningrad. Otdel. Mat. Inst. Steklov. (LOMI) \textbf{135} (1984), 182--194 (in Russian).

\bibitem{Simon} B. Simon, \emph{
CMV matrices: five years after,}
J. Comput. Appl. Math. \textbf{208} (2007),  120--154.

\bibitem{So78} A. R. Sourour, \emph{Operators with absolutely bounded matrices,} 
Math. Z. \textbf{162} (1978), 183--187.

\bibitem{So79} A. R. Sourour, \emph{
A short proof of Radjavi's theorem on self-commutators,}
Math. Ann. \textbf{239} (1979),  137--139.

\bibitem{Stampfli}  J. G. Stampfli and J. P. Williams, \emph{
Growth conditions and the numerical range in a Banach algebra,}
Tohoku Math. J. \textbf{20} (1968), 417--424.

\bibitem{Stout} Q. Stout, \emph{Schur products of operators and the essential numerical range,}
 Trans. Amer. Math. Soc. \textbf{264} (1981), 39--47.

\bibitem{Stout1}  Q. Stout, \emph{Schur multiplication on $B(\ell_p,\ell_q),$} 
J. Operator Theory \textbf{5} (1981),  231--243. 

\bibitem{Su78} V. S. Sunder, \emph{Absolutely bounded matrices,} 
Indiana Univ. Math. J. \textbf{27} (1978), 919--927. 

\bibitem{Su81} V. S. Sunder, \emph{
Unitary equivalence to integral operators,}
Pacific J. Math. \textbf{92} (1981),  211--215.

\bibitem{Weiss} G. Weiss, \emph{Commutators of Hilbert-Schmidt operators, II,} Integral Equations Operator Theory \textbf{3} (1980),  574--600.
\end{thebibliography}
\end{document}